\newcommand{\con}{\mathfrak c}
\newcommand{\FF}{\protect{\mathcal F}}
\newcommand{\sm}{\setminus}
\newcommand{\sub}{\subseteq}
\def\cA{{{\mathcal A}}}
\def\cC{{{\mathcal C}}}
\def\cD{{{\mathcal D}}}
\def\cF{{{\mathcal F}}}
\def\cZ{{{\mathcal Z}}}
\def\natnums{\mathbb N}
\def\N{\natnums}
\newcommand\eps{\ensuremath{\varepsilon}}
\newcommand{\Ba}{{\rm Ba}}
\newcommand{\impli}{\Rightarrow}
\newcommand{\Nat}{\mathbb{N}}
\newcommand{\erre}{\mathbb{R}}
\newcommand{\alg}{\mathfrak A}
\newcommand{\algb}{\mathfrak B}
\newcommand{\algc}{\mathfrak C}
\newcommand{\ult}{{\rm ULT}}
\newcommand{\en}{\mathbb N}
\newcommand{\clop}{\protect{\rm Clop} }
\newcommand{\0}{\protect{\bf 0}}
\newcommand{\1}{\protect{\bf 1}}
\newtheorem{theo}{Theorem}[section]
\newtheorem{lem}[theo]{Lemma}
\newtheorem{cor}[theo]{Corollary}
\newtheorem{defi}[theo]{Definition}
\newtheorem{fact}[theo]{Fact}
\newtheorem{lemma}[theo]{Lemma}
\theoremstyle{definition}
\theoremstyle{remark}
\newtheorem{remark}[theo]{Remark}
\numberwithin{equation}{section}
\def\epsilon{\varepsilon}
\providecommand{\MR}{\relax\ifhmode\unskip\space\fi MR }
\providecommand{\href}[2]{#2}
\title{A weak$^*$ separable $C(K)^*$ space whose
unit ball is not weak$^*$ separable}
\author{ A. Avil\'{e}s}
\address{Departamento de Matem\'{a}ticas\\
Facultad de Matem\'{a}ticas\\ Universidad de Murcia\\ 30100 Espinardo (Murcia)\\
Spain} \email{avileslo@um.es}
\author{G. Plebanek}
\address{Instytut Matematyczny\\ Uniwersytet Wroc\l awski\\ Wroc\l aw\\ Poland} \email{grzes@math.uni.wroc.pl}
\author{J. Rodr\'{i}guez}
\address{Departamento de Matem\'{a}tica Aplicada\\
Facultad de Inform\'{a}tica\\ Universidad de Murcia\\ 30100 Espinardo (Murcia)\\
Spain} \email{joserr@um.es}
\subjclass[2010]{28E15, 46E15, 46E27, 54G20}
\keywords{Weak$^*$-separability; Baire $\sigma$-algebra; measurability; Kunen cardinal}
\thanks{A.~Avil\'{e}s and J.~Rodr\'{i}guez were supported by MEC and FEDER (Project MTM2008-05396)
and Fundaci\'{o}n S\'{e}neca (Project 08848/PI/08). A. Avil\'{e}s was 
supported by {\em Ramon y Cajal} contract (RYC-2008-02051) and an FP7-PEOPLE-ERG-2008 action.
G. Plebanek was supported by MNiSW Grant N N201 418939 (2010--2013)}
\date{\today}
\begin{document}
\begin{abstract}
We provide a ZFC example of a compact space $K$ such that 
$C(K)^*$ is $w^*$-separable but its closed unit ball $B_{C(K)^*}$ is not $w^*$-separable.
All previous examples of such kind had been constructed under CH. We also discuss
the measurability of the supremum norm on that~$C(K)$ equipped with its weak Baire
$\sigma$-algebra.
\end{abstract}

\maketitle

\section{Introduction}

Let $K$ be a compact space (all our topological spaces are assumed to be Hausdorff) and let $C(K)$ be
the Banach space of all continuous real-valued functions on~$K$ (equipped with the supremum norm). 
One can consider the following list of properties related to separability in~$K$ and $C(K)^\ast$:
\begin{itemize}
\item[(a)] $K$ is separable;
\item[(b)] $K$ carries a strictly positive measure of countable type;
\item[(c)] $P(K)$ (the set of all Radon probability measures on~$K$) is $w^\ast$-separable;
\item[(d)] $B_{C(K)^\ast}$ (the closed unit ball of~$C(K)^*$) is $w^\ast$-separable;
\item[(e)] $C(K)^\ast$ is $w^\ast$-separable.
\end{itemize}
It is known that the following implications hold
\begin{center}
(a) $\Longrightarrow$ (b) $\Longrightarrow$ (c) $\Longleftrightarrow$ (d) $\Longrightarrow$ (e)
\end{center}
and cannot be reversed in general. Indeed, the Stone space of the measure algebra of the Lebesgue measure on~$[0,1]$ 
satisfies (b) but not~(a), while Talagrand~\cite{Talagrand80}
constructed under~CH two examples showing that (c)$\Rightarrow$(b) and (e)$\Rightarrow$(c) do not hold.
Recently, D\v{z}amonja and Plebanek~\cite{DzamonjaPlebanek08} gave a ZFC counterexample to (c)$\Rightarrow$(b). 

In this paper we provide a ZFC example of a compact space $K$ witnessing that the implication
(e)$\Rightarrow$(c) does not hold,
i.e. $C(K)^*$ is $w^*$-separable but $B_{C(K)^*}$ is not. The construction is given in Section~\ref{example} (see Theorem~\ref{ex2:main})
and uses techniques which are similar to those of~\cite{DzamonjaPlebanek08}.
 
Section~\ref{norm} is devoted to discuss further properties of that example which are relevant within the topic of measurability in Banach spaces. 
In every Banach space~$X$ one can consider the Baire $\sigma$-algebra of the weak topology, denoted by $\Ba(X,w)$, which coincides with
the $\sigma$-algebra on~$X$ generated by~$X^*$ (see \cite[Theorem~2.3]{edg-J}). While $\Ba(X,w)$ coincides with
the Borel $\sigma$-algebra of the norm topology if $X$ is separable, both $\sigma$-algebras 
may be different if $X$ is nonseparable. Given any equivalent norm $\|\cdot\|$ on~$X$, its closed unit ball $B_X$ belongs to
$\Ba(X,w)$ if and only if $\|\cdot\|$ is $\Ba(X,w)$-measurable (as a real-valued function on~$X$). 
It is easy to check that:
\begin{center}
$B_{X^*}$ is $w^*$-separable $\Longrightarrow$ $\|\cdot\|$ is $\Ba(X,w)$-measurable $\Longrightarrow$ $X^*$ is $w^*$-separable.
\end{center}
None of the reverse implications holds in general~\cite{rod9}. It seems to be an open problem whether
the weak Baire measurability of the {\em supremum norm} of a $C(K)$ space is equivalent to either $C(K)^\ast$ being $w^*$-separable, 
or to~$B_{C(K)^\ast}$ being $w^*$-separable. Our compact space~$K$ of Section~\ref{example} would settle one of the two questions in the negative, but we have been unable to determine the degree of measurability of the supremum norm on that~$C(K)$. 
We shall show, however, that at least there exists a norm dense set $E\sub C(K)$ where the restriction of the 
supremum norm is relatively $\Ba(C(K),w)$-measurable (Theorem~\ref{theo:XXX}). This set $E$ can be taken to be the linear span of the characteristic functions of clopen subsets of~$K$ under the set-theoretic assumption that $\mathfrak c$ is a Kunen cardinal (Theorem~\ref{analysing:6}).

\subsection*{Terminology}

We write $\mathcal{P}(S)$ to denote the power set of any set~$S$.
The cardinality of~$S$ is denoted by~$|S|$. The letter $\mathfrak{c}$ stands for the cardinality of the continuum.
A probability measure $\nu$ is said to be of countable type if the space $L^1(\nu)$ is separable.
For a compact space~$K$, we usually identify the dual space $C(K)^*$ with the space of all Radon measures on~$K$. 
Given a Boolean algebra~$\alg$, we write $\alg^+$ to denote
the set of all nonzero elements of~$\alg$. For the Boolean operations
we use the usual symbols $\cup$, $\cap$, etc. and we write $\0$ and $\1$ for the least and the greatest element.
The Stone space of all ultrafilters on~$\alg$ is denoted by~$\ult(\alg)$. Recall that the Stone isomorphism
between $\alg$ and the algebra $\clop(\ult(\alg))$ of clopen subsets of $\ult(\alg)$ is given by
$$
	\alg \to \clop(\ult(\alg)), \quad a \mapsto \widehat{a}=\{\FF\in\ult(\alg): \, a\in\FF\}.
$$
Every measure $\mu$ on~$\alg$ induces a measure $\widehat{a} \mapsto \mu(a)$ on $\clop(\ult(\alg))$ which can be uniquely 
extended to a Radon measure on~$\ult(\alg)$ (see e.g. \cite[Chapter~5]{semadeni}); 
such Radon measure is still denoted by the same letter~$\mu$.

Given a set $S$ and a family $\cD$ of subsets of~$S$, the $\sigma$-algebra on~$S$ generated by~$\cD$
is denoted by $\sigma(\cD)$. If $(\Omega,\Sigma)$ is any measurable space and $n\in \N$, we write
$\otimes_n \Sigma$ to denote the usual product $\sigma$-algebra on~$\Omega^n$, that is,
$$
	\otimes_n\Sigma=\sigma\bigl(\{A_1\times\dots\times A_n: \, A_i\in \Sigma \mbox{ for all }i=1,\dots,n\}\bigr).
$$
A cardinal $\kappa$ is called a {\em Kunen cardinal} if the equality 
$$
	\mathcal{P}(\kappa \times \kappa)=\mathcal{P}(\kappa)\otimes \mathcal{P}(\kappa)
$$
holds true. Of course, in this case we have $\mathcal{P}(\kappa^n)=\otimes_n\mathcal{P}(\kappa)$ for every $n\in \Nat$.
This notion was investigated by Kunen in his doctoral dissertation \cite{Kunen68}. It is known that:
\begin{enumerate}
\item[(i)] any Kunen cardinal is less than or equal to~$\mathfrak{c}$; 
\item[(ii)] $\omega_1$ is a Kunen cardinal; 
\item[(iii)] $\con$ is a Kunen cardinal under Martin's axiom, while it is relatively consistent that $\con$ 
is not a Kunen cardinal. 
\end{enumerate}
Kunen cardinals have been considered by Talagrand~\cite{tal10}, Fremlin~\cite{Fremlin80} and the authors~\cite{APR11}
in connection with measurability properties of Banach spaces, and also by Todorcevic~\cite{Todorcevic} in connection with 
universality properties of $\ell_\infty/c_0$.

\section{The example} \label{example}

Fix any cardinal $\kappa$ such that $\omega_1 \leq \kappa \leq \mathfrak{c}$.
Let $\lambda$ be the usual product probability measure on the Baire $\sigma$-algebra of~$2^{\kappa}$ and let $\algb$ be its measure algebra.
Note that $\algb$ has cardinality~$\con$ since every Baire subset of~$2^{\kappa}$ is determined by countably many coordinates
(see e.g. \cite[254M]{freMT-2}). 
The letter~$\lambda$ will also stand for the corresponding probability measure on~$\algb$.
We shall work in the countable simple product $\algc:=\algb^\N$ of $\algb$, so that 
every $c\in\algc$ is a sequence $c=(c(n))_n$ where $c(n)\in\algb$ for all $n\in \N$.
On the Boolean algebra $\algc$ we consider the sequence of probability measures $\{\mu_n:n\in \N\}$
defined by 
\[\mu_n(c):=\lambda(c(n)) \quad \mbox{for all } c\in\algc.\] 
Let $\{N_b: b\in \algb^+\}$ 
be a fixed {\em independent} family of subsets of~$\N$, i.e.
$$
	\bigcap_{b\in s} N_b \sm \bigcup_{b'\in t} N_{b'}\neq\emptyset
$$
whenever $s,t \sub \algb^+$ are finite and disjoint (see e.g. \cite[p.~180, Exercise 3.6.F]{Engelking}).
For each $b\in \algb^+$, define an element $G_b\in \algc$ by declaring
$$
	G_b(n):=
	\begin{cases}
	b & \text{if $n\in N_b$} \\
	\0 & \text{otherwise.}
	\end{cases}
$$
Let $\alg$ be the subalgebra of~$\algc$ generated by $\{G_b:b\in \algb^+\}$ and write $K:=\ult(\alg)$. This section is devoted to prove the following:

\begin{theo}\label{ex2:main}
$B_{C(K)^\ast}$ is not $w^\ast$-separable, while
$\{\mu_n:n\in \N\}$ separates the points of~$C(K)$, so
$C(K)^\ast$ is $w^\ast$-separable.
\end{theo}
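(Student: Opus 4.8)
The proposed proof has two parts, which I address in turn; the second is where essentially all the difficulty lies.

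\emph{$w^*$-separability of $C(K)^*$.} For the last assertion it suffices to show that the measures $\mu_n$ separate the points of $C(K)$: if they do, the $w^*$-closed linear span of $\{\mu_n:n\in\N\}$ is all of $C(K)^*$, for otherwise Hahn--Banach would furnish a nonzero $f\in C(K)$ with $\mu_n(f)=0$ for all $n$, and then the countable set of rational combinations of the $\mu_n$ is a $w^*$-dense subset of $C(K)^*$. To establish the separation property I would exploit that $\mu_n$ factors through the $n$-th coordinate homomorphism $\pi_n:\alg\to\algb$, $c\mapsto c(n)$: dualising gives a continuous map $\phi_n:\ult(\algb)\to K$ with $\phi_n^{-1}(\widehat a)=\widehat{a(n)}$ for $a\in\alg$ and $(\phi_n)_*\lambda=\mu_n$, so that $\mu_n(f)=\int_{\ult(\algb)}f\circ\phi_n\,d\lambda$. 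The union $\bigcup_n\phi_n(\ult(\algb))$ is dense in $K$, since any $a\in\alg\sm\{\0\}$ has $a(n)\neq\0$ for some $n$. Now suppose $\mu_n(f)=0$ for every $n$. For $f$ of the form $\sum_I v_I\chi_{\widehat{\beta_I}}$, where the $\beta_I$ ($I\sub\{1,\dots,j\}$) are the atoms of a finitely generated subalgebra $\langle G_{b_1},\dots,G_{b_j}\rangle$ of $\alg$, one evaluates $\mu_n$ with $n$ in the cell $\bigcap_{i\in J}N_{b_i}\sm\bigcup_{i\notin J}N_{b_i}$ (nonempty, by independence of the family $\{N_b\}$) to obtain, for each $J$, the identity $\sum_{I\sub J}v_I\,\lambda\!\bigl(\bigcap_{i\in I}b_i\cap\bigcap_{i\in J\sm I}b_i^c\bigr)=0$; solving this triangular system by induction on $|J|$, and using $b_i\in\algb^+$ (so the relevant measures are strictly positive), forces $v_I=0$ for every $I$ with $\beta_I\neq\0$. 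A general continuous $f$ is a uniform limit of such simple functions; passing to the limit requires some care, since the constants in the triangular estimate degrade as the partition refines, and here one relativises $\alg$ to the clopen sets $\widehat{G_b}$ in order to keep the bookkeeping under control.

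\emph{$B_{C(K)^*}$ is not $w^*$-separable.} By the equivalence $(c)\Leftrightarrow(d)$ recalled in the introduction, it is enough to prove that $P(K)$ is not $w^*$-separable; so assume, aiming at a contradiction, that $\{\nu_k:k\in\N\}\sub P(K)$ is $w^*$-dense. Since $\kappa\geq\omega_1$, choose a stochastically independent family $\{h_\alpha:\alpha<\omega_1\}\sub\algb^+$ with $\lambda(h_\alpha)=1/2$. Because the sets $N_{h_\alpha}$ are themselves independent, the family $\{G_{h_\alpha}:\alpha<\omega_1\}$ is independent in $\alg$: for finite disjoint $s,t$ one evaluates at a coordinate $n\in\bigcap_{\alpha\in s}N_{h_\alpha}\sm\bigcup_{\beta\in t}N_{h_\beta}$ to get the nonzero value $\bigcap_{\alpha\in s}h_\alpha$. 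Consequently, for each $\xi\in 2^{\omega_1}$ there is $x_\xi\in K$ with $G_{h_\alpha}\in x_\xi$ exactly when $\xi(\alpha)=1$. Applying $w^*$-density of $\{\nu_k\}$ to the point masses $\delta_{x_\xi}$ on single clopen coordinates, and pigeon-holing over a map $\omega_1\to\N$, one finds that a single $\nu_k$ must $\tfrac13$-approximate the values $\xi(\alpha)$ for $\alpha$ ranging over an uncountable set, simultaneously for the relevant $\xi$. The contradiction comes from feeding in the structure invisible to the free family $\{G_{h_\alpha}\}$ alone: inside each clopen set $\widehat{G_{h_\alpha}}$ the coordinate measures $\mu_n$ (for $n\in N_{h_\alpha}$), after normalisation, recover the measure $\lambda$ on $\algb\uhr h_\alpha$, which is of uncountable Maharam type because $\kappa\geq\omega_1$; combining this with the genericity of the independent family $\{N_b:b\in\algb^+\}$ one shows that no single $\nu_k$ can be compatible with the approximations required along an uncountable subfamily of $\alpha$'s. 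This is an adaptation of the argument of \cite{DzamonjaPlebanek08}.

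\emph{The main obstacle.} The heart of the proof is this last step of the second part: turning ``$\{\nu_k\}$ is $w^*$-dense'' into an actual contradiction. One has to run the $\Delta$-system/pigeon-hole combinatorics over $\omega_1$ in tandem with the measure-theoretic bookkeeping forced by the non-separability of $L^1(\lambda)$ and with the genericity of $\{N_b\}$, and fitting these together is where the real work lies. A minor but genuine technical point, already flagged above, is the passage in the first part from clopen simple functions to arbitrary continuous functions.
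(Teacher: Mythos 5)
There are genuine gaps in both halves of your argument, and in each case the paper takes a different and more economical route.

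For the non-$w^*$-separability of $B_{C(K)^*}$: you correctly reduce to showing that $P(K)$ is not $w^*$-separable, but the contradiction you then outline (an independent family $\{h_\alpha\}_{\alpha<\omega_1}$, point masses, pigeonholing, ``genericity'' of $\{N_b\}$) is never actually derived --- you yourself flag this as ``where the real work lies''. As written this is a plan, not a proof, and it is not clear it can be completed: the point masses $\delta_{x_\xi}$ only see the free Boolean structure of the $G_{h_\alpha}$'s, and extracting a contradiction from uncountable Maharam type is precisely the step you defer. The paper avoids all of this. It quotes Rosenthal's theorem ($L^\infty(\nu)^*$ is not $w^*$-separable when $\nu$ has uncountable type) to get that $P(\ult(\algb))$ is not $w^*$-separable, and then transfers this to $P(K)$ via an abstract lemma (Lemma~\ref{ex2:5}): if $\alg_1,\alg_2$ sit inside a common algebra and every $b\in\alg_2^+$ contains some $a\in\alg_1^+$, then non-$w^*$-separability of $P(\ult(\alg_2))$ passes to $P(\ult(\alg_1))$. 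This is applied with $\alg_1=\alg$ and $\alg_2$ the algebra of constant sequences in $\algc$ (isomorphic to $\algb$), the whole point being that the constant sequence with value $b$ contains $G_b$; the lemma itself needs only the M\"{a}gerl--Namioka criterion and the extension theorem for finitely additive measures. You should look for a reduction of this kind rather than a direct combinatorial attack.

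For the separation of points: your plan is to verify that $\mu_n(f)=0$ for all $n$ forces $f=0$ on clopen simple functions and then ``pass to the limit'', but separation of points does not pass from a dense subspace to the whole space --- if $f\neq 0$ is annihilated by every $\mu_n$ and $g$ is a nearby simple function, nothing forces any $\mu_n(g)$ to vanish, so your triangular system never gets applied to anything. To make the approximation work you would need a uniform quantitative bound $\sup_n|\mu_n(g)|\geq c\,\|g\|$ over all simple $g$, and, as you note, your constants degrade as the partition refines; in fact no such uniform $c$ can exist, since it would make $\{\mu_n\}$ norming and hence $B_{C(K)^*}$ $w^*$-separable, contradicting the first half. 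The paper instead argues directly with a continuous $h\neq 0$: choose $s$ of \emph{minimal cardinality} with $h(\FF_s)=C>0$, a basic clopen $\widehat{W(s,t)}$ on which $h\geq C/2$, and for each proper $r\subsetneq s$ a basic clopen neighbourhood of $\FF_r$ on which $|h|\leq\delta$; picking $n$ in the appropriate cell of the independent family $\{N_b\}$ makes the complement of $\widehat{W(s,t)}$ $\mu_n$-almost covered by those neighbourhoods, whence $\mu_n(h)\geq\tfrac{C}{4}\lambda\bigl(\bigcap_{b\in s}b\bigr)>0$. The minimality of $s$ is the idea your sketch is missing.
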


In order to prove Theorem~\ref{ex2:main} we need some previous work. For any finite sets 
$s, t \sub \algb^+$ we consider the following elements of~$\alg$:
$$
	J(s):=\bigcap_{b\in s} G_b, \quad U(t):=\bigcup_{b\in t} G_b
	\quad\mbox{and} \quad 
	W(s,t):=J(s)\setminus U(t).
$$
The Boolean algebra $\alg$ is completely determined up to isomorphism when given its set of generators 
$\{G_b : b\in\algb^+\}$ and which elements $W(s,t)$ are zero. In this sense, Lemma~\ref{lem:zero} below is interpreted as the fact that $\alg$ is isomorphic to the free Boolean algebra generated by $\{G_b : b\in\algb^+\}$ modulo the relations 
that $W(s,t) = \0$ if and only if $s\cap t \neq \emptyset$ or $\bigcap_{b\in s}b = \0$.

\begin{lem}\label{lem:zero}
For two finite sets $s,t \sub \algb^+$, we have $W(s,t) = \0$ if and only if $s\cap t \neq\emptyset$ or $\bigcap_{b\in s}b=\0$. 
In particular, for a finite $s\sub \algb^+$ the following are equivalent:
\begin{enumerate}
\item $\bigcap_{b\in s}b=\0$;
\item $J(s)=\0$;
\item $W(s,t) = \0$ for all finite $t\sub \algb^+\setminus s$;
\item $W(s,t) = \0$ for some finite $t\sub \algb^+\setminus s$.
\end{enumerate}
\end{lem}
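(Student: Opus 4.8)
The plan is to analyze the element $W(s,t) = J(s)\setminus U(t)$ coordinate by coordinate in the product algebra $\algc = \algb^\N$, where $W(s,t) = \0$ means that $W(s,t)(n) = \0$ in $\algb$ for every $n\in\N$. First I would compute $J(s)(n) = \bigcap_{b\in s} G_b(n)$: since $G_b(n) = b$ if $n\in N_b$ and $\0$ otherwise, we get $J(s)(n) = \bigcap_{b\in s} b$ provided $n\in\bigcap_{b\in s} N_b$, and $J(s)(n) = \0$ otherwise. Similarly $U(t)(n) = \bigcup_{b'\in t} G_{b'}(n) = \bigcup\{b' : b'\in t,\ n\in N_{b'}\}$.

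Now I would do the two directions. For the "if" direction: if $s\cap t\neq\emptyset$, pick $b\in s\cap t$; then at every coordinate $n$, either $n\notin N_b$ (so $J(s)(n) \leq G_b(n) = \0$) or $n\in N_b$ (so $G_b(n) = b \leq U(t)(n)$, while $J(s)(n)\leq b$, hence $J(s)(n)\setminus U(t)(n) = \0$); in either case $W(s,t)(n) = \0$. And if $\bigcap_{b\in s} b = \0$, then $J(s)(n) \leq \bigcap_{b\in s}b = \0$ at the coordinates where it is not already $\0$ for the other reason, so again $W(s,t) = \0$. For the "only if" direction, suppose $s\cap t = \emptyset$ and $\bigcap_{b\in s} b =: b_0 \neq \0$. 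Since $s$ and $t$ are finite and disjoint subsets of $\algb^+$, the independence of the family $\{N_b : b\in\algb^+\}$ gives some $n\in\bigcap_{b\in s} N_b \setminus \bigcup_{b'\in t} N_{b'}$. At that coordinate $J(s)(n) = \bigcap_{b\in s} b = b_0$ and $U(t)(n) = \bigcup\{b' : b'\in t,\ n\in N_{b'}\} = \0$ (since $n$ avoids all $N_{b'}$ with $b'\in t$), so $W(s,t)(n) = b_0 \neq \0$, hence $W(s,t)\neq\0$. The only mild subtlety is handling the coordinates where $J(s)(n)$ drops to $\0$ for index reasons versus where it equals $\bigcap_{b\in s}b$ — this is bookkeeping rather than a real obstacle, and the key input is precisely the independence property of $\{N_b\}$.

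For the "in particular" part, the equivalence of (1)--(4) follows easily from the main statement together with the fact that for $t = \emptyset$ we have $U(t) = \0$ and hence $W(s,\emptyset) = J(s)$. Indeed, (1)$\Leftrightarrow$(2) is the case $t = \emptyset$ of the main equivalence (note $s\cap\emptyset = \emptyset$). Then (1)$\Rightarrow$(3): if $\bigcap_{b\in s} b = \0$, the main statement immediately gives $W(s,t) = \0$ for every finite $t$, in particular for every finite $t\sub\algb^+\setminus s$. The implication (3)$\Rightarrow$(4) is trivial (take, say, $t = \emptyset$, which is disjoint from $s$). Finally (4)$\Rightarrow$(1): if $W(s,t) = \0$ for some finite $t\sub\algb^+\setminus s$, then $s\cap t = \emptyset$, so by the main statement the only remaining possibility is $\bigcap_{b\in s} b = \0$. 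This closes the cycle and completes the proof.
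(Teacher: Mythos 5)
Your proposal is correct and follows essentially the same route as the paper: compute $J(s)(n)$ coordinatewise, observe that $s\cap t\neq\emptyset$ or $\bigcap_{b\in s}b=\0$ forces each $W(s,t)(n)=\0$, and use the independence of $\{N_b\}$ to find a coordinate $n\in\bigcap_{b\in s}N_b\setminus\bigcup_{b'\in t}N_{b'}$ witnessing $W(s,t)\neq\0$ in the converse direction. The derivation of the equivalences (1)--(4) from the main statement is also the same; you merely spell out a few cases (e.g.\ $b\in s\cap t$) that the paper dismisses as clear.
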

\begin{proof}
It is clear that if $s\cap t\neq \emptyset$ then $W(s,t)=\0$. On the other hand,
let us observe that for every $n\in \Nat$ we have
\begin{equation}\label{equation:JSn}
	J(s)(n)=\bigcap_{b\in s} G_b(n)=
	\begin{cases}
	\bigcap_{b\in s}b & \text{if $n\in \bigcap_{b\in s}N_b$} \\
	\0 & \text{otherwise.}
	\end{cases}
\end{equation}
So if $\bigcap_{b\in s}b=\0$, then $W(s,t)\sub J(s) = \0$ as well. For the converse,
suppose that $s\cap t=\emptyset$ and $\bigcap_{b\in s}b\neq \0$, and pick
$$
	n_0\in \bigcap_{b\in s}N_b \sm \bigcup_{b'\in t}N_{b'}.
$$
Then 
$$
	W(s,t)(n_0)=\bigcap_{b\in s}G_b(n_0) \sm \bigcup_{b'\in t}G_{b'}(n_0)=\bigcap_{b\in s}b 
	\neq \0
$$
and so $W(s,t)\neq \0$. The second part of the lemma, with the list of equivalences, follows from the first statement
and the arguments above.
\end{proof}

We next describe $K = \ult(\alg)$. Let us consider the family of all subsets of $\algb$ with the finite intersection property, 
that is
$$
	\mathrm{FIP}(\algb) = \left\{X\sub \algb^+: \, b_1\cap\ldots\cap b_n \neq\0 \, \mbox{ for every }b_1,\dots,b_n\in X\right\}.
$$
Given $X\in \mathrm{FIP}(\algb)$, let $\FF_X$ be the filter on~$\alg$ 
generated by 
$$
	\{W(s,t) : \, s\sub X \mbox{ finite}, \, t\sub \algb^+\sm X \mbox{ finite}\}
$$ 
(notice that this set has the finite intersection property by Lemma~\ref{lem:zero}).

\begin{lem}\label{lem:Fs}
$K = \left\{\mathcal{F}_X : X\in\mathrm{FIP}(\algb)\right\}$
\end{lem}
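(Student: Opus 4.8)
The plan is to establish two inclusions: that every $\mathcal{F}_X$ with $X\in\mathrm{FIP}(\algb)$ is genuinely an ultrafilter on $\alg$ (hence a point of $K$), and that, conversely, every ultrafilter on $\alg$ is of this form. Throughout I would exploit that $\alg$ is generated by $\{G_b:b\in\algb^+\}$, so any fixed $a\in\alg$ lies in the finite subalgebra $\alg_0$ generated by some finite set of generators $G_{b_1},\dots,G_{b_m}$; the atoms of $\alg_0$ are precisely the nonzero elements $W(s,t)$ with $s\sqcup t=\{b_1,\dots,b_m\}$ (these are the images in $\alg$ of the atoms of the free Boolean algebra on $m$ generators, which are pairwise disjoint and join to $\1$, so the ones that do not vanish are atoms of $\alg_0$; Lemma~\ref{lem:zero} describes exactly which of them vanish, although here I only need that the survivors are atoms). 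Recall also that, as already noted after the definition of $\mathcal{F}_X$, Lemma~\ref{lem:zero} guarantees the generating family of $\mathcal{F}_X$ has the finite intersection property, so $\mathcal{F}_X$ is a proper filter on~$\alg$.

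For surjectivity, fix an ultrafilter $\mathcal{F}\in K$ and put $X:=\{b\in\algb^+:G_b\in\mathcal{F}\}$. If $b_1,\dots,b_n\in X$ then $J(\{b_1,\dots,b_n\})=G_{b_1}\cap\dots\cap G_{b_n}\in\mathcal{F}$, so in particular $J(\{b_1,\dots,b_n\})\neq\0$, and Lemma~\ref{lem:zero} gives $b_1\cap\dots\cap b_n\neq\0$; thus $X\in\mathrm{FIP}(\algb)$. I then claim $\mathcal{F}=\mathcal{F}_X$. For $\mathcal{F}_X\sub\mathcal{F}$: every generator $W(s,t)$ of $\mathcal{F}_X$ equals $\bigcap_{b\in s}G_b\cap\bigcap_{b'\in t}G_{b'}^c$, and since $s\sub X$ and $t\cap X=\emptyset$, all these factors belong to $\mathcal{F}$, hence so does $W(s,t)$. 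For the reverse inclusion, take $a\in\mathcal{F}$ and choose $G_{b_1},\dots,G_{b_m}$ generating a finite subalgebra $\alg_0\ni a$. Since $\mathcal{F}\cap\alg_0$ is an ultrafilter on $\alg_0$, exactly one atom of $\alg_0$, say $W(s_0,t_0)$, lies in $\mathcal{F}$, and then necessarily $s_0=\{b_i:G_{b_i}\in\mathcal{F}\}\sub X$ and $t_0=\{b_1,\dots,b_m\}\sm s_0\sub\algb^+\sm X$. As $\mathcal{F}\cap\alg_0$ is the principal ultrafilter of $\alg_0$ generated by the atom $W(s_0,t_0)$ and $a\in\mathcal{F}\cap\alg_0$, we get $W(s_0,t_0)\le a$; since $W(s_0,t_0)$ is one of the generators of $\mathcal{F}_X$, this gives $a\in\mathcal{F}_X$. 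Hence $\mathcal{F}=\mathcal{F}_X$.

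It remains to see that $\mathcal{F}_X$ is an ultrafilter for an \emph{arbitrary} $X\in\mathrm{FIP}(\algb)$. Being a proper filter, $\mathcal{F}_X$ extends to some ultrafilter $\mathcal{G}\in K$, and by the previous paragraph $\mathcal{G}=\mathcal{F}_Y$ where $Y=\{b:G_b\in\mathcal{G}\}$. But $Y=X$: if $b\in X$ then $G_b=W(\{b\},\emptyset)\in\mathcal{F}_X\sub\mathcal{G}$, so $b\in Y$; and if $b\notin X$ then $G_b^c=W(\emptyset,\{b\})\in\mathcal{F}_X\sub\mathcal{G}$, so $G_b\notin\mathcal{G}$ and $b\notin Y$. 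Therefore $\mathcal{G}=\mathcal{F}_X$, so $\mathcal{F}_X\in K$, which finishes the proof. The same computation shows that $X$ is recoverable from $\mathcal{F}_X$ via $X=\{b:G_b\in\mathcal{F}_X\}$, so the correspondence $X\mapsto\mathcal{F}_X$ is in fact a bijection of $\mathrm{FIP}(\algb)$ onto $K$.

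I expect the only point needing care to be the normal-form step in the first paragraph: identifying the atoms of the finite subalgebra generated by $G_{b_1},\dots,G_{b_m}$ with the nonzero $W(s,t)$ over $s\sqcup t=\{b_1,\dots,b_m\}$, and using that membership of an element in an ultrafilter of a finite Boolean algebra is equivalent to its lying above the generating atom. Everything else reduces to bookkeeping with the equivalences in Lemma~\ref{lem:zero}, so I do not anticipate any further difficulty.
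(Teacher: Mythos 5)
Your proof is correct and follows essentially the same route as the paper: both directions hinge on reading off $X=\{b:G_b\in\mathcal F\}$ and on the fact that $\{G_b:b\in\algb^+\}$ generates $\alg$. The only difference is one of packaging: where the paper simply invokes the standard facts that a proper filter deciding every generator is an ultrafilter and that two ultrafilters agreeing on a generating set coincide, you unfold these by hand via the atoms $W(s,t)$ of finite subalgebras (and obtain that $\mathcal F_X$ is an ultrafilter by extending it and identifying the extension), which is sound but not a genuinely different argument.
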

\begin{proof}
Every filter of the form $\FF_X$ is an ultrafilter on~$\alg$, 
because $\{G_b : b\in\algb^+\}$ is a set of generators of $\alg$ and,
for each $b\in\algb^+$, we have either $G_b = W(\{b\},\emptyset)\in \FF_X$ (if $b\in X$) or $\1\setminus G_b = W(\emptyset,\{b\})\in \FF_X$ (if $b\not\in X$). Conversely, let $\FF$ be any ultrafilter on~$\alg$ and consider $X := \{b\in\algb^+ : G_b\in \FF\}$. Notice that 
$X\in\mathrm{FIP}(\algb)$ and that, for each $b\in \algb^+$, we have
$$
	G_b\in \FF_X \quad \Longleftrightarrow \quad G_b\in \FF.
$$  
Since $\{G_b : b\in\algb^+\}$ is a set of generators of $\alg$, it follows that $\FF=\FF_X$. 
\end{proof}

Let $\mathrm{FIP}_{0}(\algb)$ be the family of all $s\in \mathrm{FIP}(\algb)$ which are finite. The next lemma says that the clopens of the form $\widehat{W(s,t)}$ are a basis of the topology of $K$, and also that $\{\FF_s : s\in \mathrm{FIP}_{0}(\algb)\}$ is dense in $K$. 

\begin{lem}\label{lem:density}
Let $a\in \alg^+$ and $\FF \in \widehat{a} \sub K$. 
\begin{enumerate}
\item[(i)] There exist $s\in  \mathrm{FIP}_{0}(\algb)$ and a finite set $t \sub \algb^+\sm s$ such that $\FF \in \widehat{W(s,t)} \sub \widehat{a}$.
\item[(ii)] If $\FF=\FF_{s_0}$ for some $s_0\in  \mathrm{FIP}_{0}(\algb)$, then we can choose $s=s_0$ in (i). 
\end{enumerate}
\end{lem}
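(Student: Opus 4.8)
The plan is to expand $a$ in disjunctive normal form with respect to the generators $\{G_b:b\in\algb^+\}$ of~$\alg$ and then, using that $\FF$ is an ultrafilter, to peel off a single cell $W(s,t)$ lying below~$a$ and belonging to~$\FF$.

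First I would fix a finite set $F\sub\algb^+$ with $a$ in the finite subalgebra of~$\algc$ generated by $\{G_b:b\in F\}$ (possible since these generators generate~$\alg$). The atoms of that subalgebra are exactly the nonzero elements among the $W(s,F\sm s)$, $s\sub F$, because $W(s,F\sm s)=\bigcap_{b\in s}G_b\cap\bigcap_{b\in F\sm s}(\1\sm G_b)$; by Lemma~\ref{lem:zero} a cell $W(s,F\sm s)$ is nonzero precisely when $\bigcap_{b\in s}b\neq\0$, i.e. when $s\in\mathrm{FIP}_0(\algb)$. So I can write $a=\bigcup_{s\in\cS}W(s,F\sm s)$ for a nonempty $\cS\sub\mathrm{FIP}_0(\algb)$, $\cS\sub\mathcal P(F)$ (nonempty as $a\neq\0$). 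Since $a\in\FF$ and $\FF$ is an ultrafilter, some $s\in\cS$ satisfies $W(s,F\sm s)\in\FF$; putting $t:=F\sm s$, this pair already proves~(i): $s\in\mathrm{FIP}_0(\algb)$, $t\sub\algb^+\sm s$ is finite, $W(s,t)\le a$ gives $\widehat{W(s,t)}\sub\widehat a$, and $\FF\in\widehat{W(s,t)}$.

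For~(ii) I would use, in addition, that (see the proof of Lemma~\ref{lem:Fs}) the ultrafilter $\FF_{s_0}$ is characterized by $\{b\in\algb^+:G_b\in\FF_{s_0}\}=s_0$. With $\FF=\FF_{s_0}$ and $s\in\cS$ as above, $W(s,F\sm s)=\bigcap_{b\in s}G_b\sm\bigcup_{b\in F\sm s}G_b\in\FF_{s_0}$ forces, since $\FF_{s_0}$ is an ultrafilter, $G_b\in\FF_{s_0}$ for $b\in s$ and $G_b\notin\FF_{s_0}$ for $b\in F\sm s$; hence $s=F\cap s_0$ and $F\sm s=F\sm s_0$. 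Now I replace $s$ by $s_0$ and $t$ by $F\sm s_0$: because $F\cap s_0\sub s_0$ and the removed part is unchanged, $W(s_0,F\sm s_0)\le W(F\cap s_0,F\sm s_0)=W(s,F\sm s)\le a$, whence $\widehat{W(s_0,F\sm s_0)}\sub\widehat a$; and $W(s_0,F\sm s_0)$ is one of the cells generating $\FF_{s_0}$ (its first coordinate is the finite set $s_0$, its second coordinate a finite subset of $\algb^+\sm s_0$), so it lies in $\FF_{s_0}$, i.e. $\FF_{s_0}\in\widehat{W(s_0,F\sm s_0)}$. Thus~(i) holds with $s=s_0$, which is~(ii).

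The routine parts are the normal-form expansion and checking that the cells in question are nonzero and lie in the relevant (ultra)filter. The one point needing care is the bookkeeping in~(ii): after enlarging the first coordinate from $F\cap s_0$ to $s_0$ one must still have $W(s_0,F\sm s_0)\le a$ and $W(s_0,F\sm s_0)\in\FF_{s_0}$ — the former by monotonicity of $W(\cdot,\cdot)$ together with $F\cap s_0\sub s_0$, the latter directly from the explicit generating family of $\FF_{s_0}$ introduced before Lemma~\ref{lem:Fs}.
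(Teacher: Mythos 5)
Your proof is correct and follows essentially the same route as the paper's: expand $a$ into the disjoint cells $W(s,F\sm s)$ over a finite generating set, pick the cell lying in the ultrafilter for (i), and for (ii) observe that $s=F\cap s_0$ and $t\cap s_0=\emptyset$ so that the first coordinate can be enlarged to $s_0$ by monotonicity of $W(\cdot,\cdot)$. The extra bookkeeping you flag (that $W(s_0,F\sm s_0)\le W(s,t)\le a$ and that $W(s_0,F\sm s_0)$ belongs to the generating family of $\FF_{s_0}$) is exactly the content of the paper's one-line argument for (ii).
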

\begin{proof}
(i) Take $I \sub \algb^+$ finite such that $a$ belongs to the subalgebra of~$\alg$ generated by $\{G_b:b\in I\}$.  Then 
$a$ can be written as the union of finitely many elements of the form $W(s,t)$, where $s\cup t=I$ and $s\cap t=\emptyset$. Since $a\in \FF$, there exist
$s$ and $t$ as before such that $W(s,t)\in \FF$, 
hence $s\in  \mathrm{FIP}_{0}(\algb)$ (by Lemma~\ref{lem:zero}) and $\FF \in \widehat{W(s,t)} \sub \widehat{a}$.

(ii) Since $W(s,t)\in \FF_{s_0}$, we have $G_b\in \FF_{s_0}$ for all $b\in s$ and $G_b\not \in \FF_{s_0}$ for all $b\in t$.
Hence $s \sub s_0$ and $s_0 \cap t=\emptyset$. Therefore,
$\FF_{s_0} \in \widehat{W(s_0,t)} \sub \widehat{W(s,t)} \sub \widehat{a}$.
\end{proof}

Another ingredient to prove Theorem~\ref{ex2:main} is the result isolated in Lemma~\ref{ex2:5} below, which
is a consequence of the following characterization of $w^*$-separability
in spaces of measures, due to M\"{a}gerl and Namioka~\cite{MagerlNamioka80}.

\begin{fact}\label{MagerlNamioka}
Let $L$ be a compact space. Then the space $P(L)$ is $w^\ast$-separable if and only if there is a 
sequence $\{\nu_n: n\in\en\}$ in~$P(L)$ such that for every nonempty open set $V\sub L$ there is $n\in \N$ 
with $\nu_n(V)> 1/2$. 
\end{fact}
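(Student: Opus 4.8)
We indicate how Fact~\ref{MagerlNamioka} would be proved; the argument is due to M\"agerl and Namioka~\cite{MagerlNamioka80}. The plan is to treat the two implications separately, the second being the substantial one. For the easy one, suppose $P(L)$ is $w^*$-separable and fix a $w^*$-dense sequence $\{\mu_k : k\in\en\}$ in $P(L)$; this very sequence works. Given a nonempty open $V\sub L$, pick $x\in V$ and, by Urysohn's lemma, $f\in C(L)$ with $0\le f\le\mathbf{1}_V$ and $f(x)=1$; then $\{\nu\in P(L):\int f\,d\nu>1/2\}$ is $w^*$-open and contains $\delta_x$, hence contains some $\mu_k$, and $\mu_k(V)\ge\int f\,d\mu_k>1/2$.

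For the converse, assume $\{\nu_n:n\in\en\}\sub P(L)$ has the stated property. First I would reduce the goal to producing a countable $D\sub P(L)$ with $\sup_{\rho\in D}\rho(V)=1$ for every nonempty open $V\sub L$. Given such a $D$, the rational convex combinations of its elements form a countable $w^*$-dense subset of $P(L)$: every $\mu\in P(L)$ lies in the $w^*$-closed convex hull of the Dirac measures, hence is a $w^*$-limit of rational convex combinations of them, and each $\delta_x$ is in turn $w^*$-approximated by elements of $D$ (given $g_1,\dots,g_m\in C(L)$ and $\epsilon>0$, pick an open $U\ni x$ on which every $g_i$ oscillates by less than $\epsilon$, and then $\rho\in D$ with $\rho(U)$ close enough to $1$). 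So the whole problem is to \emph{raise the threshold $1/2$ to $1-\epsilon$}.

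This is the main obstacle. It cannot be achieved by passing to rational convex combinations of the $\nu_n$: for $L=[0,1]$, with $\ell$ Lebesgue measure and $\{q_n\}$ an enumeration of the rationals, the sequence $\nu_n=\frac12\delta_{q_n}+\frac12\ell$ has the $1/2$-property, while the $w^*$-closed convex hull of $\{\nu_n:n\in\en\}$ is only the proper set $\{\frac12\sigma+\frac12\ell:\sigma\in P([0,1])\}$; and one cannot simply condition the $\nu_n$ on members of a countable $\pi$-base, since $L$ need not have one. The right tool is the intersection number of a family of sets. For $n,k\in\en$ put
$$\mathcal{V}_{n,k}=\bigl\{V\sub L : V\ \text{open},\ V\neq\emptyset,\ \nu_n(V)>\tfrac12+\tfrac1k\bigr\}.$$
By the $1/2$-property these families cover all nonempty open subsets of $L$; and integrating $x\mapsto\#\{i:x\in V_i\}$ against $\nu_n$ for a finite subfamily $V_1,\dots,V_j$ of $\mathcal{V}_{n,k}$ gives $\sum_i\nu_n(V_i)>j(\tfrac12+\tfrac1k)$, whence some point lies in more than $j(\tfrac12+\tfrac1k)$ of the $V_i$ and $\mathcal{V}_{n,k}$ has intersection number at least $\tfrac12+\tfrac1k$. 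A Kelley-type argument — using that $P(L)$ is $w^*$-compact, that $\rho\mapsto\rho(V)$ is $w^*$-lower semicontinuous for open $V$, and a finite minimax on finite subfamilies — then yields measures $m_{n,k}\in P(L)$ with $m_{n,k}(V)\ge\tfrac12+\tfrac1k$ for all $V\in\mathcal{V}_{n,k}$. One such round only reproduces a sequence with a $1/2$-type property, so the genuine content is to iterate: one passes to the open sets on which the new measures concentrate most of their mass, uses inner regularity to replace those measures by conditional measures carried on compact subsets of such sets, and re-runs the intersection-number construction with a strictly larger constant. Arranging this recursion so that the family stays countable while the constants increase to $1$ is the delicate point — where I expect the genuine work to lie; in the situation relevant to this paper, where $L$ is a Stone space, it is eased by the fact that indicators of clopen sets are continuous. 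Taking the union over the countably many stages gives the required $D$, and the reduction above finishes the proof.
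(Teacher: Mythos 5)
The paper does not prove this statement at all: it is labelled a \emph{Fact} and quoted from M\"agerl--Namioka \cite{MagerlNamioka80}, and only the statement (in both directions) is used later, in Lemma~\ref{ex2:5}. So the relevant question is whether your attempt actually constitutes a proof. The parts you work out in full are correct: the forward implication via Urysohn's lemma is fine, and your reduction of the converse to producing a countable $D\sub P(L)$ with $\sup_{\rho\in D}\rho(V)=1$ for every nonempty open $V$ is both correct and well justified (the closed convex hull of $\overline{D}$ contains all Dirac measures, hence all of $P(L)$). Your example $\nu_n=\frac12\delta_{q_n}+\frac12\ell$ on $[0,1]$ is also a genuinely useful sanity check: it shows that the given sequence itself, and even its closed convex hull, need not be dense, so some new measures must be manufactured.

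The gap is exactly where you place it, but it is a real gap and not a routine verification: the passage from the threshold $1/2$ to thresholds tending to $1$ is the entire content of the M\"agerl--Namioka theorem, and your sketch does not carry it out. The first round of your intersection-number argument, as you concede, only returns measures with the same $1/2+1/k$ lower bound that $\nu_n$ already witnesses, so it produces nothing new. The proposed iteration --- conditioning the measures on open sets (or compact subsets of open sets) where they concentrate and re-running the construction with a larger constant --- founders precisely on the issue you flag: the sets one would condition on form an uncountable family, and no countable selection of them is identified; nor is it shown that the constants obtained at successive stages actually increase to $1$ rather than stabilising (naive manipulations of intersection numbers, e.g.\ passing to pairwise intersections, \emph{decrease} the constant from $\alpha$ to $2\alpha-1$). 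The remark that the Stone-space setting of this paper would ease matters is beside the point, since the Fact is stated and needed for arbitrary compact $L$. As it stands, the proposal proves the easy implication and correctly isolates the hard one, but the hard one remains unproved; for the purposes of this paper the honest course is the one the authors take, namely to cite \cite{MagerlNamioka80}.
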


\begin{lem}\label{ex2:5}
Let $\alg_1$ and $\alg_2$ be Boolean algebras such that
there is another Boolean algebra $\alg_3$ containing $\alg_1$ and $\alg_2$ as subalgebras and the following holds:
\begin{center}
($\star$) \ for every $b\in\alg_2^+$ there is $a\in\alg_1^+$ such that $a\sub b$. 
\end{center}
If $P(\ult(\alg_2))$ is not $w^*$-separable, then $P(\ult(\alg_1))$ is not $w^*$-separable either.
\end{lem}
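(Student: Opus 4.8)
The plan is to prove the contrapositive: assuming that $P(\ult(\alg_1))$ is $w^*$-separable, I would show the same for $P(\ult(\alg_2))$. The first step is to reformulate the M\"agerl--Namioka criterion (Fact~\ref{MagerlNamioka}) in Boolean-algebraic terms. Identifying, as in the excerpt, a finitely additive probability measure on $\alg_i$ with its associated Radon measure on $\ult(\alg_i)$ (so that the measure of $\widehat{a}$ equals its value at $a$), and using that the clopen sets $\widehat{a}$ with $a\in\alg_i^+$ form a base of the nonempty open subsets of $\ult(\alg_i)$, one sees that $P(\ult(\alg_i))$ is $w^*$-separable if and only if there is a sequence $\{\nu_n : n\in\N\}$ of finitely additive probability measures on $\alg_i$ such that for every $a\in\alg_i^+$ there is $n$ with $\nu_n(a)>1/2$.

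So fix such a sequence $\{\nu_n : n\in\N\}$ on $\alg_1$. The core of the argument is to transport these measures to $\alg_2$ through $\alg_3$. Since $\alg_1$ is a subalgebra of $\alg_3$, each $\nu_n$ extends to a finitely additive probability measure $\tilde{\nu}_n$ on $\alg_3$; this is the standard extension theorem for finitely additive measures (equivalently, the surjectivity of the pushforward $P(\ult(\alg_3))\to P(\ult(\alg_1))$ induced by the canonical restriction map between Stone spaces, which comes from the surjectivity of pushforwards along continuous surjections of compact spaces). Let $\rho_n:=\tilde{\nu}_n|_{\alg_2}$, again a finitely additive probability measure, hence inducing a Radon measure on $\ult(\alg_2)$. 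Now I claim $\{\rho_n : n\in\N\}$ witnesses the $w^*$-separability of $P(\ult(\alg_2))$: given $b\in\alg_2^+$, hypothesis ($\star$) supplies $a\in\alg_1^+$ with $a\sub b$ inside $\alg_3$, and picking $n$ with $\nu_n(a)>1/2$ we get, by monotonicity of $\tilde{\nu}_n$, that $\rho_n(b)=\tilde{\nu}_n(b)\ge\tilde{\nu}_n(a)=\nu_n(a)>1/2$. By the reformulated Fact~\ref{MagerlNamioka} this yields the $w^*$-separability of $P(\ult(\alg_2))$, completing the contrapositive.

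I do not anticipate a real obstacle in this proof. The two steps that need a little care are the reformulation of Fact~\ref{MagerlNamioka} (immediate, since the clopens $\widehat{a}$ form a base and $\widehat{a}$ is nonempty exactly when $a\in\alg_i^+$) and the invocation of the finitely-additive extension theorem. The role of condition ($\star$) is precisely to make the inequality $\tilde{\nu}_n(a)\le\tilde{\nu}_n(b)$ meaningful inside $\alg_3$: it is what links the mass distributed by $\nu_n$ over $\alg_1$ to the clopen subsets of $\ult(\alg_2)$, and without it the transported measures would carry no usable information about $\ult(\alg_2)$.
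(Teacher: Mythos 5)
Your proposal is correct and is essentially the paper's own argument, merely phrased as the contrapositive: the paper likewise extends the candidate sequence from $\alg_1$ to $\alg_3$ (citing the standard finitely additive extension theorem), restricts to $\alg_2$, and uses ($\star$) together with monotonicity and the M\"agerl--Namioka criterion applied to basic clopen sets. The reformulation of Fact~\ref{MagerlNamioka} in terms of the clopen base is the same reduction the paper uses implicitly, so there is no substantive difference.
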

\begin{proof}
Let $\{\nu_n: n\in\N\}$ be a sequence of probability measures on~$\alg_1$.
For each $n\in \N$, we can extend $\nu_n$ to a probability measure
$\nu_n'$ on $\alg_3$ (see~\cite{Lipecki74} or~\cite{Plachky76}) 
and we denote by $\nu_n$ the restriction of $\nu_n'$ to $\alg_2$.
Since $P(\ult(\alg_2))$ is not $w^*$-separable, by Fact~\ref{MagerlNamioka} there is $b\in\alg_2^+$ 
such that $\nu_n(b)\le 1/2$ for every $n\in \N$.
Property ($\star$) allows us to take $a\in\alg_1^+$ such that $a\sub b$. Then
\[ 
	\nu_n(a)=\nu_n'(a)\le \nu_n'(b)=\nu_n(b)\le 1/2
	\quad
	\mbox{for every }n\in \N.
\]
Another appeal to Fact~\ref{MagerlNamioka} ensures that
$P(\ult(\alg_1))$ is not $w^*$-separable.
\end{proof}

We are now ready to deal with Theorem~\ref{ex2:main}.

\begin{proof}[Proof of Theorem~\ref{ex2:main}]
According to a result of Rosenthal (see \cite[Theorem~3.1]{rosenthal}), the space $L^\infty(\nu)^*$ is not $w^*$-separable
whenever $\nu$ is a probability measure of uncountable type. This implies that
$P(\ult(\algb))$ is not $w^*$-separable
(bear in mind that $C(\ult(\algb))$ is isomorphic to~$L^\infty(\lambda)$). 
Let $\algb^*$ be the subalgebra of~$\algc$ consisting of all constant sequences. 
Since $\algb^*$ is isomorphic to~$\algb$, $P(\ult(\algb^*))$ is not $w^*$-separable. On the other hand,  
property ($\star$) of Lemma~\ref{ex2:5} holds for $\alg_1=\alg$ and~$\alg_2=\algb^*$, hence
$P(K)$ is not $w^*$-separable and so $B_{C(K)^*}$ is not $w^*$-separable either.

We now prove that $\{\mu_n:n\in \N\}$ separates the points of $C(K)$. 
Fix $h\in C(K)\setminus \{0\}$.

\smallskip
{\sc Step 1.}
Since $\{\FF_s:s\in  \mathrm{FIP}_{0}(\algb)\}$ is dense in~$K$ (by Lemma~\ref{lem:density}(i)),
there is $s\in  \mathrm{FIP}_{0}(\algb)$ such that $h(\FF_s)\neq 0$. Moreover, we can assume further that 
\begin{equation}\label{equation:minimality}
	h(\FF_{s'})=0 
	\quad\mbox{whenever }s'\in  \mathrm{FIP}_{0}(\algb) \mbox{ satisfies }|s'|<|s|
\end{equation}
and that $C:=h(\FF_s)>0$. 
By the continuity of~$h$ and Lemma~\ref{lem:density}(ii), there is
a finite set $t \sub \algb^+\sm s$ such that, writing $a:=W(s,t)$, we have
\begin{equation}\label{equation:positivity}
	h(\FF)\geq \frac{C}{2} \quad
	\mbox{for all }\FF\in \widehat{a}.
\end{equation}
Set $\delta:=\frac{C}{4}\lambda(\bigcap_{b\in s}b)>0$
and define $\mathcal{R}:=\{r\sub s: r\neq s\} \sub  \mathrm{FIP}_{0}(\algb)$.

\smallskip
{\sc Step 2.}
Fix $r\in \mathcal{R}$. Since $h$ is continuous
and $h(\FF_{r})=0$ (by~\eqref{equation:minimality}), we can apply Lemma~\ref{lem:density}(ii) 
to find a finite set $t'_r \sub \algb^+\sm r$ such that $|h(\FF)|\leq \delta$
for all $\FF \in \widehat{W(r,t'_r)}$. Writing $t_r:=t'_r\setminus s$, we have 
$$
	a_r:=W(r,t_r\cup (s\sm r)) \sub W(r,t'_r)  \quad \mbox{in }\alg
$$
and therefore
\begin{equation}\label{equation:controlR}
	|h(\FF)|\leq \delta \quad \mbox{for all }\FF \in \widehat{a_r}.
\end{equation}

\smallskip
{\sc Step 3.} Define
$$
	t^\ast: = t\cup \bigcup_{r\in \mathcal{R}} t_r \sub \algb^+\setminus s
$$ 
and choose $n\in \bigcap_{b\in s}N_b\sm\bigcup_{b'\in t^\ast} {N_{b'}}$
(which is nonempty by independence). Hence
$$
	a(n)=\bigcap_{b\in s}b, \qquad
	a_r(n)=\bigcap_{b\in r} b\sm \bigcup_{b'\in s\sm r}b'
	\quad \mbox{for every } r\in \mathcal{R},
$$
and therefore
\begin{equation}\label{equation:keyinclusion}
	\1\sm a(n) \sub \bigcup_{r\in \mathcal{R}} a_r(n) \quad \mbox{in }\algb. 
\end{equation}

\smallskip
{\sc Step 4.} Observe that
$$
	\mu_n\left( \bigl(K \setminus \widehat{a}\bigr) \sm \bigcup_{r\in \mathcal{R}}\widehat{a_r}\right)=
	\mu_n\left( \bigl(\1 \sm a\bigr) \sm \bigcup_{r\in \mathcal{R}}a_r \right) 
	=\lambda\left( \bigl(\1 \sm a(n)\bigr) \sm \bigcup_{r\in \mathcal{R}}a_r(n) \right)\stackrel{\eqref{equation:keyinclusion}}{=} 0.	
$$
Therefore
$$
	\left|\int_{K\setminus \widehat{a}}h \, d\mu_n \right|\leq 
	\int_{\bigcup_{r\in \mathcal{R}}\widehat{a_r}}|h| \, d\mu_n \stackrel{\eqref{equation:controlR}}{\leq} \delta.
$$
It follows that
\begin{multline*}
	\mu_n(h)=\int_{\widehat{a}}h \, d\mu_n + \int_{K \sm \widehat{a}}h \, d\mu_n \geq \\ \geq
	\int_{\widehat{a}}h \, d\mu_n - \delta \stackrel{\eqref{equation:positivity}}{\geq}
	\frac{C}{2}\mu_n(\widehat{a})-\delta = 
	\frac{C}{2}\lambda(a(n))-\delta =
	\frac{C}{2}\lambda\left(\bigcap_{b\in s}b\right)-\delta=\frac{C}{4}\lambda\left(\bigcap_{b\in s}b\right)>0.
\end{multline*}
Thus $\mu_n(h)\neq 0$. This finishes the proof.
\end{proof}

\begin{remark}
\rm We stress that Rosenthal's theorem used in the proof of Theorem~\ref{ex2:main} 
is a weakening of a result stating that $L^\infty(\nu)$ is not realcompact whenever $\nu$ is a probability measure
of uncountable type, see \cite{FrPl94}.
\end{remark}

\section{Weak Baire measurability of the norm}\label{norm}

Throughout this section we follow the notation introduced in Section~\ref{example}.
The supremum norm on~$C(K)$ is denoted by~$\|\cdot\|$.

We first show that the family $\{\mu_n:n\in\N\} \sub P(K)$, though separating the elements of~$C(K)$, is
not rich enough to ``measure'' $B_{C(K)}$. 

\begin{remark}\label{example-muN}
The supremum norm on $C(K)$ does not have to be measurable with respect to
the $\sigma$-algebra generated by $\{\mu_n:n\in\N\}$.
\end{remark}
\begin{proof}
We identify $\mathcal{P}(\N)$ and $\{0,1\}^{\N}$ in the usual way.
Let $\Omega\sub 2^\N$ be an independent family of subsets of~$\N$ with
$|\Omega|=\con$ and let $\Sigma$ denote the trace of ${\rm Borel}(2^\N)$ on~$\Omega$.
Then $|\Sigma|=\con$ and so we can find 
$A\sub\Omega$ such that $A\notin\Sigma$ and $|A|=|\Omega\sm A|=\con$.
Now we can choose an enumeration $\Omega=\{N_b:b\in \algb^+\}$ such that
\begin{equation}\label{equation:choiceA}
	\lambda(b)=1/2 \ \iff \ N_{b}\in A.
\end{equation}
Define a function $f:\Omega\to C(K)$ by
\[ 
	f(N_b):=\frac{1}{2\lambda(b)}1_{\widehat{G_{b}}}.
\]
We claim that $f$ is measurable with respect to~$\Sigma$ and 
the $\sigma$-algebra on $C(K)$ generated by $\{\mu_n:n\in \N\}$. Indeed, for fixed
$n\in\N$, we have
\[
	(\mu_n\circ f)(N_b)=\frac{1}{2\lambda(b)}\mu_n(G_{b})=
	\frac{1}{2\lambda(b)}\lambda(G_{b}(n))=
	\begin{cases}
		1/2 & \text{if }n\in N_{b}\\
		0 & \text{otherwise.}
	\end{cases}
\]
It follows that $\mu_n\circ f = (1/2)\pi_n$, where $\pi_n:\Omega \sub 2^\N\to \{0,1\}$ denotes the $n$-th coordinate
projection, hence $\mu_n\circ f$ is $\Sigma$-measurable.

On the other hand, the composition $\|f(\cdot)\|:\Omega \to \erre$ is not $\Sigma$-measurable because
\[
	\|f(N_b)\|=1 \ \stackrel{\eqref{equation:choiceA}}{\iff} \ N_b\in A
\]
and $A\notin\Sigma$. This implies that $\|\cdot\|$ is not measurable
with respect to the $\sigma$-algebra on~$C(K)$ generated by $\{\mu_n:n\in \N\}$.
\end{proof}

\subsection{Measurability on a norm dense set}\label{subsection:Sprime}

Let $S(K)$ denote the dense subspace of $C(K)$ consisting of all 
linear combinations of characteristic functions of clopen subsets of~$K$. The next simple lemma provides
a useful representation of the elements of~$S(K)$.

\begin{lem}\label{analysing:2}
Let $g\in S(K)$. Then there exist $s\sub \algb^+$ finite and a collection
of real numbers $\{y_r:r \sub s\}$ such that 
$$
	g=\sum_{r \sub s}y_r 1_{\widehat{J(r)}}.
$$
\end{lem}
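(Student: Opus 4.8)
The plan is to show that an arbitrary element $g\in S(K)$, being a finite linear combination of characteristic functions of clopen subsets of $K$, can be rewritten in the ``canonical'' form involving the sets $\widehat{J(r)}$ for $r\subseteq s$. First I would recall that the clopen subsets of $K=\ult(\alg)$ are exactly the sets $\widehat{a}$ for $a\in\alg$, and that $\alg$ is generated by $\{G_b:b\in\algb^+\}$. So write $g=\sum_{i=1}^k c_i 1_{\widehat{a_i}}$ with $a_i\in\alg$; there is a finite set $s\subseteq\algb^+$ such that every $a_i$ lies in the subalgebra generated by $\{G_b:b\in s\}$. That finite subalgebra is generated by $|s|$ elements, hence its atoms are precisely the nonzero elements among $\bigcap_{b\in r}G_b\setminus\bigcup_{b\in s\setminus r}G_b=W(r,s\setminus r)$, as $r$ ranges over subsets of $s$. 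Thus each $1_{\widehat{a_i}}$, and hence $g$, is a linear combination $g=\sum_{r\subseteq s}z_r 1_{\widehat{W(r,s\setminus r)}}$ for suitable reals $z_r$.

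The second step is to pass from the partition-of-unity form indexed by the sets $\widehat{W(r,s\setminus r)}$ to the ``upper set'' form indexed by $\widehat{J(r)}$. The key identity is $\widehat{J(r)}=\bigsqcup_{r\subseteq r'\subseteq s}\widehat{W(r',s\setminus r')}$, which holds because the $\widehat{W(r',s\setminus r')}$ partition $K$ (up to the empty ones) and $J(r)=\bigcap_{b\in r}G_b$ is the union of exactly those atoms that contain all the generators $G_b$ with $b\in r$. Equivalently, $1_{\widehat{J(r)}}=\sum_{r\subseteq r'\subseteq s}1_{\widehat{W(r',s\setminus r')}}$. This is precisely a M\"obius-inversion relation on the Boolean lattice of subsets of $s$: the change of basis from $\{1_{\widehat{W(r,s\setminus r)}}\}_{r\subseteq s}$ to $\{1_{\widehat{J(r)}}\}_{r\subseteq s}$ is given by an invertible (unitriangular) matrix, so the $y_r$ are recovered from the $z_r$ by $y_r=\sum_{r\subseteq r'\subseteq s}(-1)^{|r'\setminus r|}z_{r'}$, and we obtain $g=\sum_{r\subseteq s}y_r 1_{\widehat{J(r)}}$ as desired.

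I do not anticipate a genuine obstacle here; the statement is essentially the observation that $S(K)$ is spanned by the atoms of the finitely generated subalgebras of $\alg$ together with a routine change of basis. The only point requiring a little care is the bookkeeping with the sets $W(r,s\setminus r)$: one should note that some of them may be $\0$ (by Lemma~\ref{lem:zero}, exactly when $\bigcap_{b\in r}b=\0$), in which case $\widehat{W(r,s\setminus r)}=\emptyset$ and the corresponding term simply drops out, and the same convention is harmless in the sums defining the $\widehat{J(r)}$. A cleaner way to phrase the whole argument, avoiding the explicit M\"obius formula, is: first reduce to a single $g=1_{\widehat{a}}$ by linearity, then write $a$ as a finite (disjoint) union of elements of the form $W(r,s\setminus r)$ with $r\subseteq s$, and finally use inclusion--exclusion to express each $1_{\widehat{W(r,s\setminus r)}}=\sum_{r\subseteq r'\subseteq s}(-1)^{|r'\setminus r|}1_{\widehat{J(r')}}$, which follows from $W(r,s\setminus r)=J(r)\setminus\bigcup_{b\in s\setminus r}J(r\cup\{b\})$ and the standard inclusion--exclusion identity for indicator functions.
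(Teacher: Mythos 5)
Your argument is correct and follows essentially the same route as the paper: reduce $g$ to a combination of the indicators of the atoms $W(r,s\setminus r)$ of a finitely generated subalgebra, use the identity $1_{\widehat{J(r)}}=\sum_{r\subseteq r'\subseteq s}1_{\widehat{W(r',s\setminus r')}}$, and invert the unitriangular change of basis (the paper simply defines the $y_r$ implicitly by $z_{r'}=\sum_{r\subseteq r'}y_r$ instead of writing the M\"obius formula). One small slip: your explicit inversion should run over subsets, $y_r=\sum_{r'\subseteq r}(-1)^{|r\setminus r'|}z_{r'}$, not over supersets of $r$; your alternative derivation via $1_{\widehat{W(r,s\setminus r)}}=\sum_{r\subseteq r'\subseteq s}(-1)^{|r'\setminus r|}1_{\widehat{J(r')}}$ is, however, stated correctly and yields the right coefficients.
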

\begin{proof}
We can write $g$ as
$$
	g=\sum_{r' \sub s}z_{r'}1_{\widehat{W(r',s\setminus r')}}
$$
for some $s\sub \algb^+$ finite and certain collection of real numbers
$\{z_{r'}: r' \sub s\}$. Note that there is a (unique) collection 
of real numbers $\{y_{r}: r \sub s\}$ such that
\begin{equation}\label{equation:yz}
	z_{r'}=\sum_{r \sub r'}y_r \quad
	\mbox{for every }r'\sub s.
\end{equation}
Since
$$
	J(r)=\bigcup_{r \sub r' \sub s}W(r',s \sm r')
	\quad \mbox{for every }r\sub s
$$
we conclude that
$$
	g=\sum_{r' \sub s}z_{r'}1_{\widehat{W(r',s\setminus r')}}
	\stackrel{\eqref{equation:yz}}{=}
	\sum_{r' \sub s}\sum_{r \sub r'}y_r 1_{\widehat{W(r',s\setminus r')}}= 
	\sum_{r \sub s}y_r \sum_{r \sub r' \sub s}1_{\widehat{W(r',s\setminus r')}}=
	\sum_{r \sub s}y_r 1_{\widehat{J(r)}}
$$
and the proof is over.
\end{proof}

We denote by $S'(K)$ the set of all $g\in S(K)$ which can be written as
$$
	g=\sum_{r \sub s}y_r 1_{\widehat{J(r)}}
$$
for some finite set $s\sub \algb^+$ and some collection of {\em nonzero} real numbers $\{y_r:r \sub s\}$.
It is easy to check (via Lemma~\ref{analysing:2}) that $S'(K)$ is norm dense in $S(K)$ and so, $S'(K)$ is norm
dense in $C(K)$. In Theorem~\ref{theo:XXX} we shall prove that the restriction of the supremum norm to~$S'(K)$ is measurable
with respect to the trace of $\Ba(C(K),w)$ on~$S'(K)$. The proof requires some work and the first 
step is to find a substantially larger family of measures to deal with.

\begin{lemma}\label{analysing:1}
For each $T \sub \algb^+$  and each $k\in \N$ there is a probability measure $\mu_T^k$ on~$\alg$ such that,
for every finite set $r\sub \algb^+$, we have 
\[ 
	\mu_T^k(J(r))=
	\begin{cases}
		\lambda\left(
		\bigcap_{b\in r}b\right)^k 
		& \text{if $r\sub T$},\\
		0 & \text{otherwise}.
	\end{cases}
\]
Moreover, for every finite disjoint sets $r,s\sub \algb^+$, we have 
\[ 
	\mu_T^1(W(r,s))=
	\begin{cases}
		\lambda\left(
		\bigcap_{b\in r}b \sm \bigcup_{b'\in s\cap T}b'\right)
		& \text{if $r\sub T$},\\
		0 & \text{otherwise}.
	\end{cases}
\]
Sometimes we shall write $\mu_T:=\mu_T^1$.
\end{lemma}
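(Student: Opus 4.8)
The plan is to construct each $\mu_T^k$ explicitly as a pushforward (image) of a suitable product measure, rather than trying to build it by transfinite recursion or Carathéodory-style extension. First I would fix $T\sub\algb^+$ and $k\in\N$ and consider the probability space $(2^\kappa,\mathrm{Ba},\lambda)^k$, i.e. the product of $k$ copies of the underlying Baire-measurable space carrying the product measure $\lambda^{\otimes k}$, which I shall call $\Lambda_k$. The idea is that ``intersecting $k$ independent copies'' is exactly the operation that turns $\lambda(\bigcap_{b\in r}b)$ into $\lambda(\bigcap_{b\in r}b)^k$. Concretely, for each $b\in\algb^+$ fix a Baire set $B\sub 2^\kappa$ representing $b$ in the measure algebra $\algb$, and define a map $\Phi\colon (2^\kappa)^k\to K=\ult(\alg)$ by sending a point $x=(x_1,\dots,x_k)$ to the ultrafilter $\FF_{X(x)}$, where
\[
X(x):=\{\,b\in\algb^+\cap T:\ x_1\in B,\ x_2\in B,\ \dots,\ x_k\in B\,\};
\]
one checks $X(x)\in\mathrm{FIP}(\algb)$ using the definition of $\algb^+$ (a finite intersection $\bigcap_{b\in r}B$ has positive measure whenever each $b\in r$ is nonzero and $\bigcap_{b\in r}b\ne\0$; but if some finite subfamily had $\bigcap_{b\in r}b=\0$ then for $\lambda^{\otimes k}$-a.e.\ $x$ that subfamily is not contained in $X(x)$, so after discarding a null set $\Phi$ lands in $K$ by Lemma~\ref{lem:Fs}). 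Then I would set $\mu_T^k:=\Phi_\#(\lambda^{\otimes k})$, a Radon probability measure on $K$, and restrict it to $\alg$ (equivalently to $\clop(K)$) via the Stone correspondence.

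The next step is to verify the asserted values on the clopen sets $\widehat{J(r)}$. For a finite $r\sub\algb^+$, note that $\FF_{X}\in\widehat{J(r)}$ iff $G_b\in\FF_X$ for all $b\in r$ iff $r\sub X$. Hence $\Phi(x)\in\widehat{J(r)}$ iff $r\sub X(x)$ iff $r\sub T$ \emph{and} $x_j\in\bigcap_{b\in r}B$ for every $j=1,\dots,k$. Therefore, if $r\not\sub T$ the preimage is empty (up to the discarded null set) and $\mu_T^k(J(r))=0$; while if $r\sub T$ the preimage is $\bigl(\bigcap_{b\in r}B\bigr)^k$, whose $\lambda^{\otimes k}$-measure is $\lambda\bigl(\bigcap_{b\in r}b\bigr)^k$ by independence of the coordinates and by the fact that $\lambda(B)=\lambda(b)$. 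This is exactly the claimed formula. For the ``moreover'' part with $k=1$ and $W(r,s)=J(r)\sm U(s)$ with $r,s$ finite disjoint, the same computation gives $\Phi^{-1}(\widehat{W(r,s)})$ empty when $r\not\sub T$, and otherwise equal to $\bigcap_{b\in r}B\ \sm\ \bigcup_{b'\in s}\{x:\ G_{b'}\in\FF_{X(x)}\}=\bigcap_{b\in r}B\sm\bigcup_{b'\in s\cap T}B'$ (only the $b'\in s\cap T$ can ever enter $X(x)$), whose $\lambda$-measure is $\lambda\bigl(\bigcap_{b\in r}b\sm\bigcup_{b'\in s\cap T}b'\bigr)$.

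A small technical point I would address is measurability of $\Phi$: it suffices to check that $\Phi^{-1}(\widehat{a})$ is $\lambda^{\otimes k}$-measurable for every $a\in\alg$, and since $\alg$ is generated by the $G_b$'s and every element of $\alg$ is a finite Boolean combination of finitely many $G_b$'s, each such preimage is a finite Boolean combination of sets of the form $\{x:\ x_j\in B\}$, hence Baire-measurable in $(2^\kappa)^k$; in fact this is precisely what makes the integral formulas above well defined, and a finite set of generators being involved is exactly the structure exploited in Lemma~\ref{lem:density}(i). Once $\Phi_\#(\lambda^{\otimes k})$ is defined on $\clop(K)$ it extends uniquely to a Radon measure on the compact space $K$, as recalled in the terminology section.

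The main obstacle is conceptual rather than computational: one must realize that the exponent $k$ in $\lambda(\bigcap_{b\in r}b)^k$ is forcing the use of a $k$-fold product, and that the only coherent way to make $\mu_T^k$ simultaneously correct on \emph{all} $\widehat{J(r)}$ (with the ``kill everything not inside $T$'' clause) is to let the random ultrafilter be the canonical $\FF_X$ attached to the random finite-intersection family $X(x)$; verifying $X(x)\in\mathrm{FIP}(\algb)$ almost everywhere (handling the countably—or, a priori, uncountably—many ``bad'' finite subfamilies $r$ with $\bigcap_{b\in r}b=\0$) is the one place that needs a genuine, if routine, null-set argument, and I would phrase it by noting that for such an $r$ the set $\{x:\ r\sub X(x)\}\sub\bigl(\bigcap_{b\in r}B\bigr)^k$ is already $\lambda^{\otimes k}$-null, and replacing $\Phi$ by an arbitrary fixed value of $K$ off the (countable, since each $W(s,t)$ involves only countably many coordinates) union of these null sets does not affect any of the measure computations. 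Everything else is a direct check against the formulas in Lemma~\ref{lem:zero} and the description of $K$ in Lemma~\ref{lem:Fs}.
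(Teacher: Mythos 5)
Your underlying idea is the same as the paper's: to get the exponent $k$ one passes to a $k$-fold product of $(\algb,\lambda)$ and sends each generator $G_b$ to the ``diagonal'' element $b\otimes\cdots\otimes b$ when $b\in T$ and to $\0$ otherwise. The paper implements this entirely at the level of measure algebras: the map $b\mapsto b\otimes\cdots\otimes b$ (for $b\in T$), $\0$ (otherwise) extends to a Boolean homomorphism $\tilde\varphi_T^k:\alg\to\algb_k$ into the free product $\algb_k$ carrying the product measure $\lambda_k$ (this uses the presentation of $\alg$ from Lemma~\ref{lem:zero}: the images satisfy the defining relations $W(s,t)=\0$), and one sets $\mu_T^k:=\lambda_k\circ\tilde\varphi_T^k$. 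Your version instead realizes this pointwise, as a pushforward of $\lambda^{\otimes k}$ on $(2^\kappa)^k$ under $x\mapsto\FF_{X(x)}$, and this is where there is a genuine gap. For $\FF_{X(x)}$ to be an ultrafilter you need $X(x)\in\mathrm{FIP}(\algb)$, and the exceptional set is $\bigcup_r\bigl(\bigcap_{b\in r}B_b\bigr)^k$, the union taken over all finite $r\sub T$ with $\bigcap_{b\in r}b=\0$. Your parenthetical claim that this is a \emph{countable} union is false: there are $\mathfrak c$ many such $r$ (already the pairs $\{b,\1\sm b\}$ give $\mathfrak c$ many), and the remark that each $W(s,t)$ depends on countably many coordinates is irrelevant to counting them. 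A union of $\mathfrak c$ many $\lambda^{\otimes k}$-null sets need not be null, nor even measurable --- especially since the representatives $B_b$ are each only determined up to a null set and you are choosing $\mathfrak c$ many of them. So $\Phi$ is not known to be defined almost everywhere, and $\Phi_\#(\lambda^{\otimes k})$ is not yet a legitimate measure on $K$.

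The construction is salvageable, because you never actually need $\Phi$ as a globally (or a.e.) defined map: you only need the finitely additive set function on $\alg$ given by your formulas $a=W(s,t)\mapsto\lambda^{\otimes k}\bigl(\bigl(\bigcap_{b\in s}B_b\bigr)^k\sm\bigcup_{b'\in t\cap T}(B_{b'})^k\bigr)$ for $s\sub T$, and $0$ for $s\not\sub T$. Finite additivity is then a statement involving only \emph{finitely} many generators at a time, so only finitely many null sets enter each verification, and Lemma~\ref{lem:zero} does the rest; the unique Radon extension from $\clop(K)$ to $K$ is recalled in the terminology section. But making this precise is exactly the content of checking that $b\mapsto[B_b]\otimes\cdots\otimes[B_b]$ (resp.\ $\0$) induces a well-defined homomorphism of $\alg$ into the product \emph{measure algebra} --- i.e., the paper's argument. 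As written, your proof rests on an incorrect cardinality claim at its one nontrivial step, so it does not stand on its own; either switch to the measure-algebra formulation or replace the ``discard a null set'' step by the finite-additivity check just described.
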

\begin{proof}
Let $\algb_k:=\algb\otimes\cdots\otimes\algb$ denote the free product of~$k$ many copies of~$\algb$ and 
let $\lambda_k$ denote the product measure on~$\algb_k$ (see e.g. \cite[2.25]{Fremlin89}).
Consider the function $\varphi_T^k:\algb^+ \to \algb_k$ defined by 
$$
	\varphi_T^k(b):=
	\begin{cases}
	b \otimes \dots \otimes b & \text{if $b\in T$},\\
	\0 & \text{otherwise}.
	\end{cases}
$$
Then $\varphi_T^k$ preserves disjointness, so there is a Boolean homomorphism
$\tilde{\varphi}_T^k:\alg\to \algb_k$ such that $\tilde{\varphi}_T^k(G_b)=\varphi_T^k(b)$ for all $b\in \algb^+$
(bear in mind that $\alg$ is isomorphic to the Boolean algebra freely generated by the $G_b$'s modulo the relations 
that $W(s,t) = \0$ if and only if $s\cap t \neq \emptyset$ or $\bigcap_{b\in s}b = \0$;
see Lemma~\ref{lem:zero} and the preceding comments). Now, it is not difficult to check that the formula 
\[
	\mu_T^k(a) := \lambda_k(\tilde{\varphi}_T^k(a))
\]
defines a probability measure 
$\mu_T^k$ on~$\alg$ satisfying the required property.
\end{proof}

From the technical point of view, the following subsets of~$S(K)$ will play a relevant role in our proof of
Theorem~\ref{theo:XXX}.

\begin{defi}\label{defi:SD}
Let $D$ be a finite partition of $\algb^+$. We denote by $S_D(K)$ (resp. $S'_D(K)$) the set of all $g\in S(K)$ which can be written as
$$
	g=\sum_{r \sub s}y_r 1_{\widehat{J(r)}}
$$
for some finite set $s\sub \algb^+$ such that $|T\cap s|\le 1$ for every $T\in D$
and some collection of real numbers (resp. nonzero real numbers) $\{y_r:r \sub s\}$.
\end{defi}

\begin{defi}
Let $D$ be a finite partition of $\algb^+$ and $C \sub D$. We define:
\begin{enumerate}
\item[(i)] $T_C:=\bigcup_{T\in C}T \sub \algb^+$;
\item[(ii)] a signed measure $\nu_C^k$ on~$\alg$ (for $k=1,2$) by
\[
	\nu_C^k:=\sum_{B\sub C} (-1)^{|C\sm B|}\mu_{T_B}^k;
\]
\item[(iii)] a function $\theta_C: C(K) \to \erre$ by
$$
	\theta_C(g):=\frac{ (\nu_C^1(g))^2}{\nu_C^2(g)}
$$ 
if $\nu_C^2(g)\neq 0$ and $\theta_C(g):=0$ otherwise;
\item[(iv)] a function $\eta_C: C(K) \to \erre$ by
$$
	\eta_C(g):=\frac{\nu_C^1(g)}{\theta_C(g)}
$$
if $\theta_C(g)\neq 0$ and $\eta_C(g):=0$ otherwise.
\end{enumerate}
\end{defi}

Clearly, the functions $\theta_C$ and $\eta_C$ defined above are $\Ba(C(K),w)$-measurable. The following
lemma collects several useful properties of such functions.

\begin{lem}\label{analysing:3}
Let $D$ be a finite partition of $\algb^+$ and let $g\in S_D(K)$ be as in Definition~\ref{defi:SD}. Fix $r \sub s$ and $k\in \{1,2\}$.
Writing
$$
	C_r:=\{T\in D:T\cap r \neq 0\},
$$ 
the following statements hold:
\begin{itemize}
\item[(i)] If $C \sub C_r$ then $\nu_C^k(J(r))=\mu^k_{T_C}(J(r))$.
\item[(ii)] If $C_r \subsetneq C \sub D$ then $\nu_C^k(J(r))=0$.
\item[(iii)] $\nu_{C_r}^k(g)=y_r (\lambda(\bigcap_{b\in r} b))^k$.
\item[(iv)] If $J(r)\neq \0$ then $\theta_{C_r}(g)=y_r$. 
\item[(v)] If $J(r)\neq \0$ and $y_r\neq 0$, then $\eta_{C_r}(g)=\lambda(\bigcap_{b\in r} b)$.
\end{itemize} 
\end{lem}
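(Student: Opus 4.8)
The plan is to verify the five assertions in the order (i)$\to$(ii)$\to$(iii)$\to$(iv)$\to$(v), since each one feeds into the next. The key point throughout is the formula of Lemma~\ref{analysing:1} for $\mu_T^k(J(r))$, combined with an inclusion–exclusion bookkeeping over the Boolean lattice of subsets $B\sub C$.

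First, for (i) and (ii), fix $r\sub s$ and recall $C_r=\{T\in D: T\cap r\neq\emptyset\}$. Since $D$ is a partition and $|T\cap s|\le 1$ for every $T\in D$, the set $r$ picks out exactly one element in each $T\in C_r$ and misses every $T\notin C_r$; in particular $r\sub T_B$ if and only if $C_r\sub B$. Now expand $\nu_C^k(J(r))=\sum_{B\sub C}(-1)^{|C\sm B|}\mu_{T_B}^k(J(r))$ using Lemma~\ref{analysing:1}: the summand vanishes unless $r\sub T_B$, i.e.\ unless $C_r\sub B$, in which case it equals $(-1)^{|C\sm B|}\lambda(\bigcap_{b\in r}b)^k$. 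If $C\sub C_r$, the only $B$ with $C_r\sub B\sub C$ is $B=C=C_r$ (when $C=C_r$) — more precisely the surviving terms are those $B$ with $C_r\cap C\sub B\sub C$, but since $C\sub C_r$ this forces $B=C$, giving exactly $\mu^k_{T_C}(J(r))$, which is (i). If $C_r\subsetneq C$, the surviving $B$ range over $\{B: C_r\sub B\sub C\}$, a nonempty Boolean interval of size $2^{|C\sm C_r|}\ge 2$, and $\sum (-1)^{|C\sm B|}$ over such an interval is $0$; hence $\nu_C^k(J(r))=0$, which is (ii).

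For (iii), write $g=\sum_{r'\sub s}y_{r'}1_{\widehat{J(r')}}$ and apply $\nu_{C_r}^k$ by linearity: $\nu_{C_r}^k(g)=\sum_{r'\sub s}y_{r'}\,\nu_{C_r}^k(J(r'))$. For each $r'$, one compares $C_{r'}$ with $C_r$. If $C_{r'}\subsetneq C_r$ the term is $\mu^k_{T_{C_r}}(J(r'))$ by (i), which is $\lambda(\bigcap_{b\in r'}b)^k$ if $r'\sub T_{C_r}$ and $0$ otherwise — and here one uses that $r'\sub s$ together with $|T\cap s|\le 1$ forces $r'\sub T_{C_{r'}}$, so the only $r'$ with $C_{r'}=C_r$ and $r'\sub T_{C_r}$ is $r'=r$ itself; if $C_{r'}\not\sub C_r$ then $C_r\subsetneq C_{r'}\cup C_r$ is not the relevant comparison — rather one checks directly from Lemma~\ref{analysing:1} that $\mu^k_{T_B}(J(r'))=0$ unless $r'\sub T_B$, i.e.\ unless $C_{r'}\sub B\sub C_r$, which is impossible when $C_{r'}\not\sub C_r$, so again the term vanishes. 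Thus only $r'=r$ contributes, giving $\nu_{C_r}^k(g)=y_r\lambda(\bigcap_{b\in r}b)^k$, which is (iii). Then (iv) is immediate: if $J(r)\neq\0$ then by Lemma~\ref{lem:zero} $\lambda(\bigcap_{b\in r}b)>0$, so plugging $k=1,2$ from (iii) into $\theta_{C_r}(g)=(\nu_{C_r}^1(g))^2/\nu_{C_r}^2(g)$ gives $\theta_{C_r}(g)=(y_r\lambda(\bigcap_{b\in r}b))^2/(y_r^2\lambda(\bigcap_{b\in r}b)^2)=y_r$ when $y_r\neq 0$, and one must separately note that when $y_r=0$ one has $\nu_{C_r}^2(g)=0$ so $\theta_{C_r}(g)=0=y_r$ by the definition's convention. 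Finally (v): under $J(r)\neq\0$ and $y_r\neq 0$ we have $\theta_{C_r}(g)=y_r\neq 0$, so $\eta_{C_r}(g)=\nu_{C_r}^1(g)/\theta_{C_r}(g)=y_r\lambda(\bigcap_{b\in r}b)/y_r=\lambda(\bigcap_{b\in r}b)$.

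The main obstacle is the careful case analysis in (iii): one must be sure that no term with $r'\neq r$ survives, which hinges precisely on the partition hypothesis $|T\cap s|\le 1$ (guaranteeing $C_{r'}=C_r$ and $r'\sub T_{C_r}$ jointly force $r'=r$) and on the vanishing statement (ii) together with the support condition in Lemma~\ref{analysing:1}. The inclusion–exclusion sum $\sum_{C_r\sub B\sub C}(-1)^{|C\sm B|}=0$ for $C_r\subsetneq C$ used in (ii) is the standard binomial identity and is routine once the indexing is set up. Everything else is bookkeeping and substitution.
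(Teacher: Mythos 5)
Your treatment of (i), (ii), (iv) and (v) follows the paper's proof in essentially the same way: identify the surviving terms of the alternating sum via the equivalence $r\sub T_B\iff C_r\sub B$, and cancel over the Boolean interval $[C_r,C]$ when $C_r\subsetneq C$. (Two small blemishes: in (i) the clause ``the surviving terms are those $B$ with $C_r\cap C\sub B$'' should read $C_r\sub B$, though the conclusion is unaffected; and in (iv) the displayed quotient has denominator $y_r^2\lambda(\bigcap_{b\in r}b)^2$ where (iii) gives $\nu_{C_r}^2(g)=y_r\lambda(\bigcap_{b\in r}b)^2$ --- as written your fraction equals $1$, not $y_r$, although the intended computation is correct.)

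The genuine problem is in (iii), in the case $C_{r'}\subsetneq C_r$, which (because $|T\cap s|\le 1$) is exactly the case $r'\subsetneq r$. You assert that the term $\nu_{C_r}^k(J(r'))$ equals $\mu^k_{T_{C_r}}(J(r'))$ ``by (i)''; but (i), applied with $r'$ as the base set and $C=C_r$, requires $C_r\sub C_{r'}$, the opposite inclusion, so it does not apply here. Worse, the value you assign is wrong: for $r'\subsetneq r$ one has $r'\sub r\sub T_{C_r}$, so $\mu^k_{T_{C_r}}(J(r'))=\lambda(\bigcap_{b\in r'}b)^k$, which is in general nonzero (for $r'=\emptyset$ it equals $1$), whereas the true value is $\nu_{C_r}^k(J(r'))=0$. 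If those terms contributed what you claim, you would obtain $\sum_{r'\sub r}y_{r'}\lambda(\bigcap_{b\in r'}b)^k$ instead of the single term $y_r\lambda(\bigcap_{b\in r}b)^k$, and (iii) would fail. The correct step, which is how the paper argues, is to apply part (ii) to $r'$: since $r'\subsetneq r$ forces $C_{r'}\subsetneq C_r$ (each $T\in C_r$ meets $s$ in a single point), (ii) with $C=C_r$ gives $\nu_{C_r}^k(J(r'))=0$ by the cancellation over $C_{r'}\sub B\sub C_r$. Your closing paragraph does name ``the vanishing statement (ii)'' as an ingredient of (iii), so this may be a slip rather than a misunderstanding, but the case analysis as written is internally inconsistent (the subcase $C_{r'}\subsetneq C_r$ is concluded by a statement about $C_{r'}=C_r$) and assigns nonzero values to terms that must cancel; the proof does not go through without replacing that step by an appeal to (ii). The remaining cases --- $C_{r'}\not\sub C_r$ handled directly from Lemma~\ref{analysing:1}, and $C_{r'}=C_r$ forcing $r'=r$ via $|T\cap s|\le 1$ --- are fine.
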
 
\begin{proof} 
(i) For each $B \subsetneq C$ we have $r \not\subseteq T_B$ (because $C \sub C_r$) and so $\mu^k_{T_B}(J(r))=0$. 
Hence
$$
	\nu_C^k(J(r))=\sum_{B \sub C} (-1)^{|C\sm B|}\mu^k_{T_B}(J(r))=\mu^k_{T_C}(J(r)).
$$

(ii) For each $B \sub C$ with $C_r \not\subseteq B$ we have $r\not\subseteq T_B$ and so $\mu^k_{T_B}(J(r))=0$. 
On the other hand, given any $C_r \sub B \sub C$ we have $r \sub T_{C_r} \sub T_B$ and therefore
 $\mu^k_{T_B}(J(r))=(\lambda(\bigcap_{b\in r}b))^k$. Hence
\begin{multline*}
	\nu^k_C(J(r))=\sum_{B \sub C} (-1)^{|C\sm B|}\mu^k_{T_B}(J(r))
	=\sum_{C_r\sub B\sub C} (-1)^{|C\sm B|}\mu^k_{T_{B}}(J(r))= \\ =
	\left(\lambda\left(\bigcap_{b\in r}b\right)\right)^k\cdot 
	\sum_{C_r\sub B\sub C} (-1)^{|C\sm B|}.
\end{multline*}
The assumption $C \neq C_r$ implies
$$
	\sum_{C_r \sub B \sub C} (-1)^{|C\sm B|}=\sum_{A \sub C \sm C_r}(-1)^{|A|}=0,
$$
as can be easily checked by induction on $|C \sm C_r|$. Therefore, $\nu^k_C(J(r))=0$.

(iii) Take any $r' \sub s$. Note first that if $r' \not\subseteq r$ then $r' \not\subseteq T_B$ for every $B \sub C_r$, hence
$$
	\nu^k_{C_r}(J(r'))=\sum_{B \sub C_r}(-1)^{|C_r \sm B|}\mu^k_{T_B}(J(r'))=0.
$$
On the other hand, if $r' \subsetneq r$, then $C_{r'} \subsetneq C_r$ and (ii) implies
$\nu^k_{C_r}(J(r'))=0$. Observe also that $\nu_{C_r}^k(J(r))=(\lambda(\bigcap_{b\in r}b))^k$ (by~(i)).
It follows that
$$
	\nu_{C_r}^k(g)=
	\sum_{r' \sub s}y_{r'}\nu_{C_r}^k(J(r'))=
	y_{r}\nu_{C_r}^k(J(r))=y_r\left(\lambda\left(\bigcap_{b\in r}b\right)\right)^k.
$$

(iv) Bearing in mind~(iii) and the fact that $J(r)\neq \0$, we have $y_r=0$ if and only if $\nu_{C_r}^2(g)=0$. 
Thus, by the very definition of~$\theta_{C_r}$, the equality $\theta_{C_r}(g)=y_r$ holds whenever $y_r=0$. On the other hand,
if $y_r\neq 0$ then 
$$
	\theta_{C_r}(g)=
	\frac{(\nu^1_{C_r}(g))^2}{\nu^2_{C_r}(g)}\stackrel{{\rm (iii)}}{=}y_r
$$
and 
$$
	\eta_{C_r}(g)=\frac{\nu_{C_r}^1(g)}{\theta_{C_r}(g)}\stackrel{{\rm (iii)}}{=}\lambda\left(\bigcap_{b\in r}b\right),
$$
which proves~(v).
\end{proof}

Our next step is to prove that, for any $Z\sub \algb^+$ and $p\in \Nat$, 
the mapping $g\mapsto \mu_Z(g^p)$ is measurable with respect to the trace of $\Ba(C(K),w)$ on $S'(K)$
(see Lemma~\ref{lem:PuttingTogether} below). We begin by checking the measurability on subsets of the form~$S'_D(K)$.

\begin{lem} \label{analysing:4}
Let $Z\sub \algb^+$ be a set, $D$ a finite partition of $\algb^+$ finer than $\{Z,\algb^+\sm Z\}$ and $p\in \Nat$.
Then the mapping 
$$
	\phi_{Z,p}:S'_D(K) \to \erre,
	\quad \phi_{Z,p}(g):=\mu_Z(g^p),
$$
is measurable with respect to the trace of $\Ba(C(K),w)$ on $S'_D(K)$.
\end{lem}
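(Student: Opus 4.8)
The plan is to exhibit $\phi_{Z,p}$ as the restriction to $S'_D(K)$ of a function on $C(K)$ built, by finitely many additions and multiplications, from the functions $\theta_C$ and $\eta_C$ with $C\subseteq D$. Since those functions are $\Ba(C(K),w)$-measurable and $D$ is finite, such a function is $\Ba(C(K),w)$-measurable, and this gives the statement about the trace $\sigma$-algebra.

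First I would compute $\mu_Z(g^p)$ explicitly. Fix $g\in S'_D(K)$ with a representation $g=\sum_{r\subseteq s}y_r 1_{\widehat{J(r)}}$ as in Definition~\ref{defi:SD} ($|T\cap s|\le 1$ for all $T\in D$, all $y_r\neq 0$). Since $J(r_1)\cap\dots\cap J(r_p)=\bigcap_{b\in r_1\cup\dots\cup r_p}G_b=J(r_1\cup\dots\cup r_p)$ in $\alg$, expanding the $p$-th power pointwise gives $g^p=\sum_{r_1,\dots,r_p\subseteq s}y_{r_1}\cdots y_{r_p}\,1_{\widehat{J(r_1\cup\dots\cup r_p)}}$, whence Lemma~\ref{analysing:1} (with $k=1$, which evaluates $\mu_Z$ at $J(R)$ as $\lambda(\bigcap_{b\in R}b)$ if $R\subseteq Z$ and $0$ otherwise) yields
$$
\mu_Z(g^p)=\sum_{r_1,\dots,r_p\subseteq s\cap Z}y_{r_1}\cdots y_{r_p}\;\lambda\left(\bigcap_{b\in r_1\cup\dots\cup r_p}b\right).
$$

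Next I would replace the representation-dependent quantities $y_r$ and $\lambda(\bigcap_{b\in r}b)$ by values of the intrinsic functions $\theta_C,\eta_C$. For $r\subseteq s$ put $C_r=\{T\in D:T\cap r\neq\emptyset\}$ as in Lemma~\ref{analysing:3}, and observe $C_{r_1\cup\dots\cup r_p}=C_{r_1}\cup\dots\cup C_{r_p}$. The key identity to check is that, for $R:=r_1\cup\dots\cup r_p$,
$$
y_{r_1}\cdots y_{r_p}\,\lambda\left(\bigcap_{b\in R}b\right)=\theta_{C_{r_1}}(g)\cdots\theta_{C_{r_p}}(g)\,\eta_{C_R}(g).
$$
If $\bigcap_{b\in R}b\neq\0$ this is immediate from Lemma~\ref{analysing:3}: part (iv) gives $\theta_{C_{r_i}}(g)=y_{r_i}$ (since $J(r_i)\supseteq J(R)\neq\0$), and part (v) gives $\eta_{C_R}(g)=\lambda(\bigcap_{b\in R}b)$ (using $y_R\neq0$, which holds because $R\subseteq s$ and $g\in S'_D(K)$). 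If $\bigcap_{b\in R}b=\0$, both sides vanish, the right-hand one because $\nu_{C_R}^k(g)=y_R\,\lambda(\bigcap_{b\in R}b)^k=0$ for $k=1,2$ by Lemma~\ref{analysing:3}(iii), so $\theta_{C_R}(g)=\eta_{C_R}(g)=0$. Substituting into the formula above turns $\mu_Z(g^p)$ into $\sum_{r_1,\dots,r_p\subseteq s\cap Z}\theta_{C_{r_1}}(g)\cdots\theta_{C_{r_p}}(g)\,\eta_{C_{r_1}\cup\dots\cup C_{r_p}}(g)$.

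Finally I would make the summation index independent of $g$. Because $D$ refines $\{Z,\algb^+\setminus Z\}$, a subset $r\subseteq s$ lies in $Z$ iff $C_r\subseteq D_Z:=\{T\in D:T\subseteq Z\}$, and $r\mapsto C_r$ is a bijection from the subsets of $s$ onto $\{C\subseteq D:C\subseteq C_s\}$ (because $|T\cap s|\le1$, so $r=\bigcup_{T\in C_r}(T\cap s)$). Hence the displayed sum runs over all tuples $(C_1,\dots,C_p)$ of subsets of $C_s\cap D_Z$. Moreover, a mild strengthening of Lemma~\ref{analysing:3} with the same proof shows $\nu^k_C(g)=0$ whenever no $r\subseteq s$ has $C_r=C$; so if $C_i\not\subseteq C_s$ then $\theta_{C_i}(g)=0$, and enlarging the index set to all subsets of the fixed finite family $D_Z$ only adds zero terms. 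Thus, for every $g\in S'_D(K)$,
$$
\mu_Z(g^p)=\sum_{C_1,\dots,C_p\subseteq D_Z}\theta_{C_1}(g)\cdots\theta_{C_p}(g)\,\eta_{C_1\cup\dots\cup C_p}(g),
$$
and the right-hand side defines a $\Ba(C(K),w)$-measurable function on all of $C(K)$ agreeing with $\phi_{Z,p}$ on $S'_D(K)$, which completes the proof.

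The algebraic core — the identity $J(r_1)\cap\dots\cap J(r_p)=J(r_1\cup\dots\cup r_p)$ together with Lemma~\ref{analysing:1} evaluating $\mu_Z$ on the $J$'s — is straightforward. The main obstacle is the bookkeeping in the last two steps: verifying the $\theta$/$\eta$ identity in the degenerate cases in which some $\bigcap_{b\in r_i}b$ or $\bigcap_{b\in R}b$ equals $\0$, and checking that passing from the $g$-dependent index set $\{C\subseteq C_s\cap D_Z\}$ to the fixed set $\{C\subseteq D_Z\}$ introduces only vanishing summands. This is precisely where the hypotheses that $D$ refines $\{Z,\algb^+\setminus Z\}$ and that $g\in S'_D(K)$ (rather than merely $S_D(K)$) are used.
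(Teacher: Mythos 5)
Your proof is correct and follows essentially the same route as the paper: the identity you derive is exactly the paper's formula $\phi_{Z,p}(g)=\sum_{\beta}\binom{p}{\beta}\bigl(\prod_{\beta_C>0}\theta_C^{\beta_C}(g)\bigr)\eta_{C(\beta)}(g)$ with the multinomial sum written out over ordered $p$-tuples $(C_1,\dots,C_p)$, proved from the same ingredients (Lemma~\ref{analysing:1} and Lemma~\ref{analysing:3}). Your term-by-term verification, which also covers the vanishing summands, is if anything slightly cleaner than the paper's two-way matching of the nonzero summands of \eqref{equation:formulaMU} and \eqref{equation:formula}.
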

\begin{proof}
Write $D(Z):=\{T \in D: T \sub Z\}$
and let $\Lambda$ be the set of all $\beta=(\beta_C)_{C \sub D(Z)}$ such that $\beta_C\in \Nat\cup \{0\}$ for all $C \sub D(Z)$
and $\sum_{C \sub D(Z)}\beta_C=p$. We write
$$
	\binom{p}{\beta}:=\frac{p!}{\prod_{C \sub D(Z)}\beta_C!}
	\quad
	\mbox{and}
	\quad
	C(\beta):=\bigcup_{\substack{C\sub D(Z)\\ \beta_C>0}}C. 
$$
To prove the measurability of $\phi_{Z,p}$ with respect to the trace of $\Ba(C(K),w)$ on $S'_D(K)$ it is sufficient to check 
that, for each $g \in S'_D(K)$, the following equality holds:
\begin{equation}\label{equation:formula}
	\phi_{Z,p}(g)=
	\sum_{\beta \in \Lambda}
	\binom{p}{\beta}
	\left(\prod_{\substack{C \sub D(Z)\\\beta_C>0}}\theta_C^{\beta_C}(g)\right)
	\eta_{C(\beta)}(g).
\end{equation}

\smallskip
{\sc Step~1.} Write
$$
	g=\sum_{r \sub s}y_r 1_{\widehat{J(r)}},
$$
where $s\sub \algb^+$ is finite, $|T\cap s|\le 1$ for every $T\in D$
and $y_r\in \erre \sm \{0\}$ for all $r\sub s$. Let $\Delta$ be the set of 
all $\delta=(\delta_r)_{r \sub s}$ such that $\delta_r \in \Nat\cup \{0\}$ for all $r \sub s$
and $\sum_{r \sub s}\delta_r=p$. Writing
$$
	\binom{p}{\delta}:=\frac{p!}{\prod_{r \sub s}\delta_r!}
	\quad
	\mbox{and}
	\quad
	r(\delta):=\bigcup_{\substack{r\sub s\\ \delta_r>0}}r,
$$
we have
$$
	g^p=
	\sum_{\delta \in \Delta} \binom{p}{\delta}
	\left( \prod_{\substack{r \sub s\\ \delta_r>0}} y_{r}^{\delta_r} \right)
	1_{\widehat{J(r(\delta))}}
$$
and so
\begin{equation}\label{equation:formulaMU}
	\phi_{Z,p}(g)=
	\mu_Z(g^p)=
	\sum_{\delta \in \Delta} \binom{p}{\delta}
	\left( \prod_{\substack{r \sub s \\ \delta_r>0}} y_{r}^{\delta_r} \right)
	\mu_Z\bigl(J(r(\delta))\bigr).
\end{equation}

\smallskip
{\sc Step~2.} Let $\delta \in \Delta$ such that $\mu_Z(J(r(\delta)))\neq 0$.
For any $r \sub s$ with $\delta_r>0$ we have $J(r) \supseteq J(r(\delta))$, hence $\mu_Z(J(r))\neq 0$ (in particular, $J(r)\neq \0$)
and so $r \sub Z$, which implies that $C_r=\{T\in D:T \cap r\neq \emptyset\} \sub D(Z)$.
Set $\beta=(\beta_C)_{C \sub D(Z)}$ by declaring
\begin{equation}\label{equation:defineBETA}
	\beta_C:=
	\begin{cases}
		\delta_r & \text{if $C=C_r$ for some $r\sub s$ with $\delta_r>0$,}\\
		0 & \text{otherwise}.
	\end{cases}
\end{equation}
Then $\sum_{C \sub D(Z)}\beta_C=\sum_{r \sub s}\delta_r=p$, so that $\beta\in \Lambda$. Moreover, we have
$$
	C(\beta)=\bigcup_{\substack{C \sub D(Z) \\ \beta_C>0}}C=
	\bigcup_{\substack{r \sub s\\ \delta_r>0}}C_r=C_{r(\delta)}.
$$
Since $\mu_Z(J(r(\delta)))\neq 0$, we have $J(r(\delta))\neq \0$ and $\mu_Z(J(r(\delta)))=\lambda(\bigcap_{b\in r(\delta)}b)$.
Thus, from Lemma~\ref{analysing:3}(v) it follows that $\eta_{C(\beta)}(g)=\eta_{C_{r(\delta)}}(g)=\mu_Z(J(r(\delta)))$.
On the other hand, for each $r \sub s$ with $\delta_r>0$ we have $y_r=\theta_{C_r}(g)$ by Lemma~\ref{analysing:3}(iv), hence
$$
	\prod_{\substack{C \sub D(Z)\\\beta_C>0}}\theta_C^{\beta_C}(g)=
	\prod_{\substack{r \sub s\\ \delta_r>0}}\theta_{C_r}^{\delta_r}(g)=
	\prod_{\substack{r \sub s\\ \delta_r>0}}y_r^{\delta_r}.
$$
Therefore
$$	
	\binom{p}{\beta}
	\left(\prod_{\substack{C \sub D(Z)\\\beta_C>0}}\theta_C^{\beta_C}(g)\right)
	\eta_{C(\beta)}(g)=
	\binom{p}{\delta}
	\left(\prod_{\substack{r \sub s\\ \delta_r>0}}y_r^{\delta_r}\right)
	\mu_Z\bigl(J(r(\delta))\bigr).
$$
This shows that each nonzero summand of~\eqref{equation:formulaMU} can be written
as a summand of~\eqref{equation:formula}. Note also that 
if $\delta' \in \Delta$ satisfies $\mu_Z(J(r(\delta')))\neq 0$ and we 
define $\beta'=(\beta'_C)_{C \sub D(Z)}\in \Lambda$ as in~\eqref{equation:defineBETA}
(with $\delta$ replaced by~$\delta'$), then $\beta\neq \beta'$ whenever $\delta\neq \delta'$.

\smallskip
{\sc Step~3.} Let $\beta \in \Lambda$ such that
\begin{equation}\label{equation:rhs}
	\left(\prod_{\substack{C \sub D(Z)\\\beta_C>0}}\theta_C^{\beta_C}(g)\right)
	\eta_{C(\beta)}(g)\neq 0.
\end{equation}

Fix $C \sub D(Z)$ with $\beta_C>0$. We claim that $r_C:=T_C \cap s$ satisfies
$C_{r_C}=C$. Indeed, the inclusion $C_{r_C} \sub C$ is clear. To prove the reverse inclusion, we argue by contradiction.
Suppose $C_{r_C}\subsetneq C$. By Lemma~\ref{analysing:3}(ii), we have $\nu_C^1(J(r'))=0$ whenever $r' \sub T_C$. Since
we also have $\nu_C^1(J(r'))=0$ for every $r' \sub s$ with $r' \not\subseteq T_C$ (by the very definition of~$\nu_C^1$), it follows
that $\nu_C^1(g)=\sum_{r' \sub s} y_{r'}\nu_C^1(J(r'))=0$, hence $\theta_C(g)=0$, which contradicts~\eqref{equation:rhs}.
This shows that $C_{r_C}=C$, as claimed. Now Lemma~\ref{analysing:3}(iii) ensures that
$$
	\nu_C^1(g)=\nu_{C_{r_C}}^1(g)=y_{r_C}\lambda\left(\bigcap_{b\in r_C}b \right).
$$
Since $\nu_C^1(g)\neq 0$, the previous equality implies that $J(r_C)\neq \0$. From Lemma~\ref{analysing:3}(iv)
it follows that $y_{r_C}=\theta_{C_{r_C}}(g)=\theta_C(g)$.

Set $\delta=(\delta_r)_{r\sub s}$ by declaring
\begin{equation}\label{equation:defineDELTA}
	\delta_r:=
	\begin{cases}
	\beta_{C} & \text{if $r=r_C$ for some $C \sub D(Z)$ with $\beta_C>0$},\\
	0 & \text{otherwise}.
	\end{cases}
\end{equation}
Then $\sum_{r\sub s}\delta_r=\sum_{C \sub D(Z)}\beta_C=p$, hence $\delta \in \Delta$. From our previous considerations
we deduce that
$$
	\prod_{\substack{C \sub D(Z)\\\beta_C>0}} \theta_C^{\beta_C}(g)=
	\prod_{\substack{C \sub D(Z)\\\beta_C>0}} y_{r_C}^{\beta_C}=
	\prod_{\substack{r \sub s \\ \delta_r>0}} y_r^{\delta_r}.
$$

Moreover, we claim that $\eta_{C(\beta)}(g)=\mu_Z(J(r(\delta)))$. Indeed, since
$$
	C(\beta)=\bigcup_{\substack{C \sub D(Z) \\ \beta_C>0}}C=
	\bigcup_{\substack{C \sub D(Z) \\ \beta_C>0}}C_{r_C}=
	\bigcup_{\substack{r \sub s\\ \delta_r>0}}C_r=C_{r(\delta)},
$$
we have $\eta_{C(\beta)}(g)=\eta_{C_{r(\delta)}}(g)$ and so~\eqref{equation:rhs}
implies that $\nu_{C_{r(\delta)}}^1(g)\neq 0$. Bearing in mind
Lemma~\ref{analysing:3}(iii) and the fact that $y_{r(\delta)}\neq 0$, we infer that $J(r(\delta))\neq \0$.
An appeal to Lemma~\ref{analysing:3}(v) now yields $\eta_{C(\beta)}(g)=\lambda(\bigcap_{b\in r(\delta)}b)$. On the other hand,
the fact that 
$$
	r(\delta)=\bigcup_{\substack{r \sub s\\ \delta_r>0}} r=\bigcup_{\substack{C \sub D(Z)\\ \beta_C>0}} r_C=
	\bigcup_{\substack{C \sub D(Z)\\ \beta_C>0}} T_C\cap s 	\sub Z
$$
ensures that $\mu_Z(J(r(\delta)))=\lambda(\bigcap_{b\in r(\delta)}b)$. It follows that
$\eta_{C(\beta)}(g)=\mu_Z(J(r(\delta)))$. 

Therefore 
$$
	\binom{p}{\beta}\left(\prod_{\substack{C \sub D(Z)\\\beta_C>0}}\theta_C^{\beta_C}(g)\right)
	\eta_{C(\beta)}(g)=
	\binom{p}{\delta}\left(\prod_{\substack{r \sub s \\ \delta_r>0}} y_r^{\delta_r}\right)
	\mu_Z(J(r(\delta))).
$$	
This shows that each nonzero summand of~\eqref{equation:formula} can be written
as a summand of~\eqref{equation:formulaMU}. Note 
that if $\beta' \in \Lambda$ satisfies~\eqref{equation:rhs} 
(with $\beta$ replaced by~$\beta'$) and we define
$\delta'=(\delta'_r)_{r\sub s}\in \Delta$ as in~\eqref{equation:defineDELTA}
(with $\beta$ replaced by~$\beta'$),
then $\delta \neq \delta'$ whenever $\beta \neq \beta'$.

Thus, equality \eqref{equation:formula} holds true and the
proof is over.
\end{proof}

The following folklore fact will allow us to prove Lemma~\ref{lem:PuttingTogether} as an easy consequence
of Lemma~\ref{analysing:4} above.

\begin{remark}\label{analysing:5}
Let $(\Omega,\Sigma)$ be a measurable space. Write $\Omega=\bigcup_{j\in \N}\Omega_j$ where $\Omega_j \sub \Omega_{j+1}$
for all $j\in \N$. Let $A \sub \Omega$ be a set such that, 
for each $j\in \Nat$, the intersection $A\cap \Omega_j$ belongs to the trace of~$\Sigma$ on~$\Omega_j$. 
Then $A\in \Sigma$.
\end{remark}
\begin{proof}
For each $j\in \N$ we have $A\cap \Omega_j= E_j\cap \Omega_j$ for some $E_j\in\Sigma$. We claim that $A=\bigcup_{k\in \N}\bigcap_{j\geq k}E_j$.
Indeed, we have
$$
	A=\bigcup_{k\in \N}A\cap \Omega_k=\bigcup_{k\in \N}\bigcap_{j\geq k}A \cap \Omega_j=
	\bigcup_{k\in \N}\bigcap_{j\geq k}E_j \cap \Omega_j \sub \bigcup_{k\in \N}\bigcap_{j\geq k}E_j.
$$
On the other hand, for each $k\in \N$, we have
$$
	\bigcap_{j\geq k}E_j=
	\bigcup_{n\geq k}\bigcap_{j\geq k}E_j \cap \Omega_n \sub
	\bigcup_{n\geq k}E_n \cap \Omega_n=\bigcup_{n\geq k}A \cap \Omega_n=A.
$$
It follows that $A=\bigcup_{k\in \N}\bigcap_{j\geq k}E_j\in \Sigma$.
\end{proof}

\begin{lem}\label{lem:PuttingTogether}
Let $Z\sub \algb^+$ be a set and $p\in \Nat$.
Then the mapping 
$$
	\phi_{Z,p}:S'(K) \to \erre,
	\quad \phi_{Z,p}(g):=\mu_Z(g^p),
$$
is measurable with respect to the trace of $\Ba(C(K),w)$ on $S'(K)$.
\end{lem}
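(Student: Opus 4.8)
The plan is to derive the statement from Lemma~\ref{analysing:4} by writing $S'(K)$ as a countable increasing union of sets of the form $S'_D(K)$ and then invoking Remark~\ref{analysing:5}. As a preliminary I would record the routine monotonicity: if a finite partition $D'$ of $\algb^+$ refines a finite partition $D$, then $S'_D(K)\sub S'_{D'}(K)$. Indeed, if $g=\sum_{r\sub s}y_r1_{\widehat{J(r)}}$ witnesses $g\in S'_D(K)$ (in particular $|T\cap s|\le 1$ for all $T\in D$) and $T'\in D'$, then $T'$ is contained in some $T\in D$, so $|T'\cap s|\le|T\cap s|\le 1$, and the very same representation witnesses $g\in S'_{D'}(K)$.

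Next I would build the partitions. Since every Baire subset of $2^{\kappa}$ depends on countably many coordinates we have $|\algb^+|\le|\algb|=\con$, so there is an injection $\iota\colon\algb^+\to 2^{\N}$. For $k\in\N$ let $D_k$ be the finite partition of $\algb^+$ consisting of the nonempty sets $\{b\in\algb^+:\iota(b)\uhr k=\sigma\}\cap Z$ and $\{b\in\algb^+:\iota(b)\uhr k=\sigma\}\sm Z$ with $\sigma\in 2^{k}$. Then each $D_k$ is finer than $\{Z,\algb^+\sm Z\}$, and $D_{k+1}$ refines $D_k$, so by the preliminary remark the sets $S'_{D_k}(K)$ form an increasing sequence. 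Moreover $S'(K)=\bigcup_{k\in\N}S'_{D_k}(K)$: given $g\in S'(K)$ with a representation supported on a finite set $s\sub\algb^+$, the injectivity of $\iota$ and the finiteness of $s$ provide a $k$ so large that the words $\iota(b)\uhr k$ ($b\in s$) are pairwise distinct; then every cell $T\in D_k$ meets $s$ in at most one point, i.e. $|T\cap s|\le 1$, so $g\in S'_{D_k}(K)$.

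To conclude, I would apply Lemma~\ref{analysing:4} with $D=D_k$ (which is legitimate since $D_k$ is finer than $\{Z,\algb^+\sm Z\}$): the restriction of $\phi_{Z,p}$ to $S'_{D_k}(K)$ is measurable with respect to the trace of $\Ba(C(K),w)$ on $S'_{D_k}(K)$. Because $S'_{D_k}(K)\sub S'(K)$, this trace coincides with the trace on $S'_{D_k}(K)$ of the $\sigma$-algebra $\Sigma$ obtained by tracing $\Ba(C(K),w)$ onto $S'(K)$. Hence, applying Remark~\ref{analysing:5} with $\Omega=S'(K)$, $\Omega_k=S'_{D_k}(K)$ and this $\Sigma$, one gets $\phi_{Z,p}^{-1}(B)\in\Sigma$ for every Borel set $B\sub\erre$; equivalently, $\phi_{Z,p}$ is measurable with respect to the trace of $\Ba(C(K),w)$ on $S'(K)$, which is the claim.

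The only step that is not pure bookkeeping is the construction of the partitions. A single finite partition cannot resolve $\algb^+$ when $|\algb^+|=\con$, which is why one cannot reduce directly to Lemma~\ref{analysing:4}; the point to get right is that a countable increasing chain of finite partitions obtained from an injection into $2^{\N}$ nevertheless separates every finite subset of $\algb^+$, and it is exactly this that makes the chain $\{S'_{D_k}(K)\}_k$ exhaust $S'(K)$ and brings Remark~\ref{analysing:5} into play.
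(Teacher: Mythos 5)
Your proposal is correct and follows essentially the same route as the paper: an increasing chain of finite partitions refining $\{Z,\algb^+\sm Z\}$ built from the binary digits of an embedding of $\algb^+$ into $2^{\N}$, the observation that this chain separates every finite subset so that $S'(K)=\bigcup_k S'_{D_k}(K)$ increasingly, and then Lemma~\ref{analysing:4} combined with Remark~\ref{analysing:5}. The only (immaterial) difference is that you use an injection $\algb^+\to 2^{\N}$ where the paper uses a bijection $\{0,1\}^{\N}\to\algb^+$.
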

\begin{proof}
Since $|\algb^+|=\mathfrak{c}$, there is a sequence $D(1),D(2),\dots$ of finite partitions of $\algb^+$, each one being finer than
$\{Z,\algb^+\sm Z\}$, such that:
\begin{itemize}
\item $D(j+1)$ is finer than $D(j)$ for all $j\in \N$;
\item for every $s \sub \algb^+$ finite there is $j\in \N$ such that $|T\cap s|\leq 1$ for all $T\in D(j)$.
\end{itemize}
Indeed, let $\xi: \{0,1\}^{\N} \to \algb^+$ be any bijection and, for each $j\in \N$ and $\sigma\in \{0,1\}^j$, set
$$
	E^\sigma_j:=\{x\in \{0,1\}^{\N}: \, x(i)=\sigma(i) \mbox{ for every }i=1,\dots,j\}.
$$
Then the partitions 
$$
	D(j):=\bigl\{\xi(E^\sigma_j)\cap Z: \sigma\in \{0,1\}^j\bigr\}
	\cup
	\bigl\{\xi(E^\sigma_j)\sm Z: \sigma\in \{0,1\}^j\bigr\}, \quad j\in\N,
$$
fulfill the required properties. 

Clearly, $S'(K)=\bigcup_{j\in \N} S'_{D(j)}(K)$ and $S'_{D(j)}(K) \sub S'_{D(j+1)}(K)$ for all $j\in \N$.
The measurability of $\phi_{Z,p}$ with respect to the trace of $\Ba(C(K),w)$ on~$S'(K)$
now follows from Lemma~\ref{analysing:4} and Remark~\ref{analysing:5}.
\end{proof}

We have already gathered all the tools needed to prove the main result of this subsection:

\begin{theo}\label{theo:XXX}
The restriction of the supremum norm to~$S'(K)$ is measurable with respect to the trace of $\Ba(C(K),w)$ on~$S'(K)$.
\end{theo}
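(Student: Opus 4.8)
The plan is to combine the two tools assembled in the excerpt: the measures $\mu_Z$ of Lemma~\ref{analysing:1} (whose powers $g\mapsto\mu_Z(g^{p})$ are measurable on $S'(K)$ by Lemma~\ref{lem:PuttingTogether}) and the ``exhaustion'' device of Remark~\ref{analysing:5}. Concretely, I would show that on each piece $S'_{D(j)}(K)$ of a suitable increasing cover of $S'(K)$ the supremum norm coincides with the maximum of finitely many seminorms $g\mapsto\|g\|_{L^\infty(\mu_{T_C})}$, and that each of these is measurable because $\|g\|_{L^\infty(\mu)}=\lim_{p\to\infty}\bigl(\mu(g^{2p})\bigr)^{1/2p}$ for a probability measure~$\mu$.

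First I would fix a bijection $\xi:\{0,1\}^{\N}\to\algb^+$ and take the finite partitions $D(j):=\{\xi(E^\sigma_j):\sigma\in\{0,1\}^j\}$ of $\algb^+$ exactly as in the proof of Lemma~\ref{lem:PuttingTogether}; then $D(j+1)$ refines $D(j)$, every finite $s\sub\algb^+$ satisfies $|T\cap s|\le1$ for all $T\in D(j)$ once $j$ is large, hence $S'(K)=\bigcup_{j}S'_{D(j)}(K)$ with $S'_{D(j)}(K)\sub S'_{D(j+1)}(K)$. By Remark~\ref{analysing:5} (applied to the sets $\{g:\|g\|\le\alpha\}$ for $\alpha\in\erre$) it then suffices to prove that, for each $j$, the restriction of $\|\cdot\|$ to $S'_{D(j)}(K)$ is measurable with respect to the trace of $\Ba(C(K),w)$ on $S'_{D(j)}(K)$.

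The core of the argument is the following claim, which I would establish for a fixed finite partition $D=D(j)$: for every $g\in S'_D(K)$,
\[
	\|g\|=\max_{C\sub D}\,\|g\|_{L^\infty(\mu_{T_C})}.
\]
Here ``$\ge$'' is trivial since each $\mu_{T_C}$ is a probability measure. For ``$\le$'', write $g=\sum_{r\sub s}y_r1_{\widehat{J(r)}}$ as in Definition~\ref{defi:SD}. For $r^*\sub s$ the clopen sets $A_{r^*}:=\widehat{W(r^*,s\sm r^*)}$ are pairwise disjoint, and (by Lemma~\ref{lem:zero}) the nonempty ones, namely those with $\bigcap_{b\in r^*}b\neq\0$, cover $K$; moreover on $A_{r^*}$ one has $G_b\in\FF$ for $b\in r^*$ and $G_b\notin\FF$ for $b\in s\sm r^*$, so $G_b\cap\cdots\cap G_{b'}=J(r)\in\FF$ exactly when $r\sub r^*$, whence $g$ is constantly equal to $v_{r^*}:=\sum_{r\sub r^*}y_r$ on $A_{r^*}$. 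Thus $g$ is a step function over this clopen partition and $\|g\|=\max\{|v_{r^*}|:r^*\sub s,\ \bigcap_{b\in r^*}b\neq\0\}$. Given such an $r^*$, set $C:=C_{r^*}=\{T\in D:T\cap r^*\neq\emptyset\}$; because $|T\cap s|\le1$ for all $T\in D$ one checks that $r^*=T_C\cap s$, hence $(s\sm r^*)\cap T_C=\emptyset$, and the second formula of Lemma~\ref{analysing:1} gives $\mu_{T_C}(A_{r^*})=\mu_{T_C}\bigl(W(r^*,s\sm r^*)\bigr)=\lambda\bigl(\bigcap_{b\in r^*}b\bigr)>0$. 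Therefore $\|g\|_{L^\infty(\mu_{T_C})}\ge|v_{r^*}|$, and taking the maximum over all admissible $r^*$ proves the claim.

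Finally, for each $C\sub D$ and each $p\in\Nat$ the map $g\mapsto\mu_{T_C}(g^{2p})$ is measurable with respect to the trace of $\Ba(C(K),w)$ on $S'(K)$ by Lemma~\ref{lem:PuttingTogether}, so $g\mapsto\|g\|_{L^\infty(\mu_{T_C})}=\lim_{p\to\infty}\bigl(\mu_{T_C}(g^{2p})\bigr)^{1/2p}$ is measurable as well; since $\{T_C:C\sub D\}$ is finite, the displayed formula shows that $\|\cdot\|$ restricted to $S'_{D(j)}(K)$ is measurable with respect to the corresponding trace, and Remark~\ref{analysing:5} completes the proof. The step I expect to be the main obstacle is the claim, and within it the realization that one must test $g$ precisely against $\mu_{T_C}$ with $C=C_{r^*}$: this is what turns the hypothesis $|T\cap s|\le1$ into the identity $(s\sm r^*)\cap T_C=\emptyset$, which is exactly what makes $\mu_{T_C}$ charge the clopen piece on which $g$ attains the value $v_{r^*}$. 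Recognizing that $g\restriction S'_D(K)$ is a step function over the clopen partition $\{A_{r^*}\}$ is the structural observation that makes everything else routine.
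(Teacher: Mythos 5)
Your proof is correct, but it reaches formula-level measurability by a different route than the paper. The paper proves a single \emph{global} identity, valid for every $g\in C(K)$: fixing a countable algebra $\cZ$ on $\algb^+$ separating points, it shows $\|g\|=\sup_{Z\in\cZ}\limsup_{p\to\infty}\bigl(\mu_Z(g^{2p})\bigr)^{1/2p}$, the inequality ``$\le$'' being obtained topologically from the density statement of Lemma~\ref{lem:density}(i) (pick a basic clopen $\widehat{W(r,s)}$ on which $|g|\ge\|g\|-\eps$, then choose $Z\in\cZ$ with $r\sub Z$, $s\cap Z=\emptyset$, so that $\mu_Z(W(r,s))=\lambda(\bigcap_{b\in r}b)>0$); the theorem then follows in one line from Lemma~\ref{lem:PuttingTogether}, with no further use of the exhaustion $S'(K)=\bigcup_j S'_{D(j)}(K)$. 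You instead exploit the combinatorial structure of simple functions: on each $S'_{D}(K)$ the function $g$ is a step function over the clopen partition $\{\widehat{W(r^*,s\sm r^*)}\}$ with value $v_{r^*}=\sum_{r\sub r^*}y_r$, and the hypothesis $|T\cap s|\le 1$ forces $T_{C_{r^*}}\cap s=r^*$, so $\mu_{T_{C_{r^*}}}$ charges exactly the piece where $|g|=|v_{r^*}|$; this yields the exact finite identity $\|g\|=\max_{C\sub D}\|g\|_{L^\infty(\mu_{T_C})}$, after which you reapply Remark~\ref{analysing:5}. All the steps check out (in particular $W(r^*,s\sm r^*)\neq\0$ iff $\bigcap_{b\in r^*}b\neq\0$ by Lemma~\ref{lem:zero}, and $\mu_{T_C}(W(r^*,s\sm r^*))=\lambda(\bigcap_{b\in r^*}b)$ by the second formula of Lemma~\ref{analysing:1}). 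The trade-off: your formula is sharper where it applies --- a genuine maximum over finitely many measures per piece, with an honest limit instead of a $\limsup$ --- but it is intrinsically tied to simple functions; the paper's formula~\eqref{equation:formulaNORM} holds on all of $C(K)$, which is what makes it relevant to the open problems (A) and (C), since it reduces full measurability of the norm to measurability of $g\mapsto\mu_Z(g^{2p})$ on all of $C(K)$.
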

\begin{proof}
We fix a countable algebra~$\cZ$ on~$\algb^+$ which separates the points of~$\algb^+$ (the algebra
of clopen subsets of $\{0,1\}^\Nat$ can be transferred to $\algb^+$ via any bijection
between $\{0,1\}^\Nat$ and~$\algb^+$). We claim that 
\begin{equation}\label{equation:formulaNORM}
	\|g\| =
	\sup_{Z\in\cZ} \limsup_{p\to\infty} \left( \mu_Z(g^{2p})\right)^{\frac{1}{2p}}
	\quad
	\mbox{for every }g\in C(K).
\end{equation}
Indeed, the inequality ``$\ge$'' is obvious (each $\mu_Z$ is a probability measure).
To verify the reverse inequality, fix $g\in C(K)$ and take $\eps>0$. By Lemma~\ref{lem:density}(i)
there exist finite disjoint sets $r,s \sub \algb^+$ such that $|g(\cF)|\ge \|g\|-\eps$ for every $\cF\in \widehat{W(r,s)}\neq \emptyset$. 
Since $\cZ$ separates the points of~$\algb^+$, we can find 
$Z\in\cZ$ such that $r \sub Z$ and $s \cap Z=\emptyset$, hence $\mu_Z(W(r,s))=\lambda(\bigcap_{b\in r} b)>0$.
Since
$$
	\left(\mu_Z(g^{2p})\right)^{\frac{1}{2p}}
	\ge (\|g\|-\eps)\bigl(\mu_Z(W(r,s))\bigr)^{\frac{1}{2p}} \quad \mbox{for every }p\in \N,
$$
we have
$$
	\limsup_{p\to\infty} \left( \mu_Z(g^{2p})\right)^{\frac{1}{2p}} \geq
	(\|g\|-\eps)\lim_{p\to \infty}\bigl(\mu_Z(W(r,s))\bigr)^{\frac{1}{2p}}=\|g\|-\eps.
$$
As $\eps>0$ is arbitrary, equality~\eqref{equation:formulaNORM} holds true.

Once we know that $\|\cdot\|$ is expressed by the formula~\eqref{equation:formulaNORM}, 
the assertion follows from Lemma~\ref{lem:PuttingTogether}.
\end{proof}

\subsection{Measurability on the set of simple functions}\label{subsection:Kunen}

Any element of~$S(K)$ admits a representation which cannot be simplified in a sense, as the following lemma shows.

\begin{lem}\label{lem:simplification}
Let $g\in S(K)$. Then there exist a finite set $s\sub \algb^+$ and a collection of real numbers $\{z_r:r \sub s\}$
such that:
\begin{enumerate}
\item[(i)] $g=\sum_{r \sub s}z_{r}1_{\widehat{W(r,s\setminus r)}}$;
\item[(ii)] there is no $s' \subsetneq s$ such that 
$$
	z_r=z_{r'} \quad \mbox{whenever} \quad r \cap s'=r' \cap s', \ \bigcap_{b\in r}b \neq \0
	\mbox{ and }\bigcap_{b\in r'}b \neq \0.
$$
\end{enumerate}
\end{lem}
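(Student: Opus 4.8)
The plan is to start from the representation guaranteed by Lemma~\ref{analysing:2} (or its obvious variant with the $W(r,s\setminus r)$'s in place of the $J(r)$'s, which are interderivable via inclusion–exclusion exactly as in the proof of Lemma~\ref{analysing:2}), and then perform a finite descent. Concretely, among all finite sets $s\sub\algb^+$ for which $g$ admits a representation of the form~(i), I would pick one of \emph{minimal cardinality}; since $g\in S(K)$ is a finite linear combination of characteristic functions of clopen sets, at least one such $s$ exists, so a minimal one exists. Fix the corresponding coefficients $\{z_r:r\sub s\}$. I claim this $s$ automatically satisfies~(ii).

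The key step is the following reduction lemma, which I would isolate: if there exists $s'\subsetneq s$ such that $z_r=z_{r'}$ whenever $r\cap s'=r'\cap s'$, $\bigcap_{b\in r}b\neq\0$ and $\bigcap_{b\in r'}b\neq\0$, then $g$ admits a representation over $s'$, contradicting minimality. To prove this I would first discard from the sum all terms with $\bigcap_{b\in r}b=\0$: by Lemma~\ref{lem:zero} these have $J(r)=\0$, hence $W(r,s\setminus r)=\0$ and $1_{\widehat{W(r,s\setminus r)}}=0$, so they contribute nothing. For the remaining (nonzero) terms the hypothesis says $z_r$ depends only on $r\cap s'$; so group the surviving terms according to the value of $r\cap s'$. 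For each $r'\sub s'$ with $\bigcap_{b\in r'}b\neq\0$, the nonzero $W(r,s\setminus r)$ appearing with $r\cap s'=r'$ partition the clopen set $\widehat{W(r',s'\setminus r')}$ — indeed $W(r',s'\setminus r')=\bigcup\{W(r,s\setminus r): r\cap s'=r',\ r\sub s\}$ in $\alg$, and the pieces are pairwise disjoint — so summing over that group yields $z_{r'}1_{\widehat{W(r',s'\setminus r')}}$. Collecting over all $r'\sub s'$ gives $g=\sum_{r'\sub s'}z_{r'}1_{\widehat{W(r',s'\setminus r')}}$, a representation over $s'$, the desired contradiction.

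The main obstacle, and the part deserving care, is the combinatorial bookkeeping in that grouping argument: verifying that for fixed $r'\sub s'$ the sets $\{W(r,s\setminus r): r\sub s,\ r\cap s'=r'\}$ really do union up (inside $\alg$) to $W(r',s'\setminus r')$ and are pairwise disjoint, so that the characteristic functions add correctly after passing to Stone spaces. This is a routine identity in the free-modulo-relations Boolean algebra $\alg$ (combine the generator-splitting identity $1=G_b\cup(\1\setminus G_b)$ over the coordinates $b\in s\setminus s'$ with the vanishing relations from Lemma~\ref{lem:zero} to drop the zero pieces), but one must handle the $\bigcap_{b\in r}b=\0$ cases consistently on both sides. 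Once the descent lemma is in hand, the proof concludes by the minimality of $|s|$.
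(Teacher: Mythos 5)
Your proposal is correct and follows essentially the same route as the paper: the core of both arguments is the identity $W(r',s'\setminus r')=\bigcup\{W(r,s\setminus r): r\sub s,\ r\cap s'=r',\ \bigcap_{b\in r}b\neq\0\}$ together with the constancy of $z_r$ on each such group, which produces a representation over the smaller set $s'$. The paper phrases the descent as an induction on $|s|$ rather than a minimal-cardinality choice, but these are interchangeable formulations of the same argument.
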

\begin{proof}
Of course, we can write $g$ as in~(i). To get a representation satisfying~(ii), we proceed by induction on~$|s|$. The case $s=\emptyset$ being obvious, we assume
that $s\neq \emptyset$ and that the induction hypothesis holds. Assume that (ii) fails and fix 
$s' \subsetneq s$ such that 
$$
	z_r=z_{r'} \quad \mbox{whenever} \quad r \cap s'=r' \cap s', \ \bigcap_{b\in r}b \neq \0
	\mbox{ and }\bigcap_{b\in r'}b \neq \0.
$$
For any $t \sub s'$ with $\bigcap_{b\in t}b \neq \0$, let $\cA_t$ be the collection of all
$r \sub s$ such that $r \cap s'=t$ and $\bigcap_{b\in r}b\neq \0$. Then 
$z_r=z_t$ for every $r \in \cA_t$ and
$$
	W(t,s'\sm t)=\bigcup_{r\in \cA_t} W(r,s\sm r),
$$
as can be easily checked. Hence
\begin{multline*}
	g=
	\sum_{r \sub s}z_{r}1_{\widehat{W(r,s\setminus r)}}=
	\sum_{t \sub s'}
	\sum_{\substack{r \sub s \\ r \cap s'=t}}z_r 1_{\widehat{W(r,s\setminus r)}}=
	\sum_{\substack{t \sub s' \\ \bigcap_{b\in t}b \neq \0}}
	\sum_{r \in \cA_t}z_r 1_{\widehat{W(r,s\setminus r)}}= \\ =
	\sum_{\substack{t \sub s' \\ \bigcap_{b\in t}b \neq \0}}
	\sum_{r \in \cA_t}z_t 1_{\widehat{W(r,s\setminus r)}}=
	\sum_{\substack{t \sub s' \\ \bigcap_{b\in t}b \neq \0}}
	z_t 1_{\widehat{W(t,s'\setminus t)}}=
	\sum_{t \sub s'} z_t 1_{\widehat{W(t,s'\setminus t)}}.
\end{multline*}	 
Since $|s'|<|s|$, the induction hypothesis now ensures that $g$ admits a representation satisfying both (i) and~(ii).
\end{proof}

\begin{defi}\label{defi:AD}
Let $D$ be a finite partition of~$\algb^+$. We denote by $A_D(K)$ the set of all $g\in S(K)$ which can be written as
$$
	g=\sum_{r \sub s}z_{r}1_{\widehat{W(r,s\setminus r)}}
$$
for some finite set $s\sub \algb^+$ and some collection of real numbers $\{z_r: r \sub s\}$ such that:
\begin{enumerate}
\item[(i)] $|T\cap s|=1$ for every $T\in D$;
\item[(ii)] there is no $s' \subsetneq s$ such that 
$$
	z_r=z_{r'} \quad \mbox{whenever} \quad r \cap s'=r' \cap s', \ \bigcap_{b\in r}b \neq \0
	\mbox{ and }\bigcap_{b\in r'}b \neq \0.
$$
\end{enumerate}
\end{defi}

Our next step is to prove that the sets $A_D(K)$ defined above belong to the trace of~$\Ba(C(K),w)$ on~$S(K)$
(see Corollary~\ref{cor:SDmeasurable} below). From now on we fix a {\em countable} algebra~$\cZ$ on~$\algb^+$ which separates the points of~$\algb^+$ (like in the 
proof of Theorem~\ref{theo:XXX}).
 
\begin{lem}\label{lem:Condition2}
Let $D$ be a finite partition of~$\algb^+$ with $D \sub \cZ$ and let $g\in A_D(K)$. Then
for each $T_0\in D$ there is $T\in \cZ$ such that $\mu_{T\sm T_0}(g)\neq\mu_{T\cup T_0}(g)$.
\end{lem}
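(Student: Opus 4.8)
The plan is to compute the difference $\mu_{T\cup T_0}(g)-\mu_{T\sm T_0}(g)$ explicitly in terms of a ``$J$-representation'' of~$g$, and then to argue that if this difference vanished for \emph{every} $T\in\cZ$ then~$g$ would admit a strictly smaller representation, contradicting condition~(ii) of Definition~\ref{defi:AD}.

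First I would fix the data witnessing $g\in A_D(K)$: a finite set $s\sub\algb^+$ with $|T\cap s|=1$ for every $T\in D$ together with real numbers $\{z_r:r\sub s\}$ as in Definition~\ref{defi:AD}. Since $T_0\in D$, there is a unique $b_0\in s\cap T_0$; put $s_0:=s\sm\{b_0\}$. As in the proof of Lemma~\ref{analysing:2}, rewrite $g=\sum_{\rho\sub s}y_\rho 1_{\widehat{J(\rho)}}$ for the (unique) reals $y_\rho$ satisfying $z_r=\sum_{\rho\sub r}y_\rho$ for every $r\sub s$. By Lemma~\ref{analysing:1}, for every $Q\sub\algb^+$ one has $\mu_Q(g)=\sum_{\rho\sub s\cap Q}y_\rho\,\lambda\bigl(\bigcap_{b\in\rho}b\bigr)$. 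Now fix $T\in\cZ$ and put $v:=(s\cap T)\sm\{b_0\}\sub s_0$; from $s\cap T_0=\{b_0\}$ one checks $s\cap(T\cup T_0)=v\cup\{b_0\}$ (a disjoint union) and $s\cap(T\sm T_0)=v$, so that cancelling the common terms yields
\begin{equation}\label{eq:planDiff}
	\mu_{T\cup T_0}(g)-\mu_{T\sm T_0}(g)=\sum_{\rho\sub v}y_{\rho\cup\{b_0\}}\,\lambda\Bigl(\bigcap_{b\in\rho\cup\{b_0\}}b\Bigr)=:F(v).
\end{equation}
As $\cZ$ is an algebra separating the points of~$\algb^+$, its trace on the finite set~$s$ is all of~$\mathcal{P}(s)$, so $v$ runs over every subset of~$s_0$ when $T$ runs over~$\cZ$; hence it suffices to produce some $v\sub s_0$ with $F(v)\neq 0$.

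Suppose, towards a contradiction, that $F(v)=0$ for all $v\sub s_0$. Writing $F(v)=\sum_{\rho\sub v}f(\rho)$ with $f(\rho):=y_{\rho\cup\{b_0\}}\,\lambda\bigl(\bigcap_{b\in\rho\cup\{b_0\}}b\bigr)$, M\"{o}bius inversion over the subset lattice of~$s_0$ gives $f(\rho)=0$ for all $\rho\sub s_0$; since a nonzero element of a measure algebra has strictly positive measure, this forces $y_{\rho\cup\{b_0\}}=0$ whenever $\bigcap_{b\in\rho\cup\{b_0\}}b\neq\0$. I would then check that $s':=s_0\subsetneq s$ witnesses the failure of Definition~\ref{defi:AD}(ii): if $r,r'\sub s$ satisfy $r\cap s_0=r'\cap s_0=:\tau$ and $\bigcap_{b\in r}b\neq\0$, $\bigcap_{b\in r'}b\neq\0$, then $\{r,r'\}\sub\{\tau,\tau\cup\{b_0\}\}$, and in the only nontrivial case $r=\tau$, $r'=\tau\cup\{b_0\}$ we get $z_{r'}-z_r=\sum_{\rho\sub\tau}y_{\rho\cup\{b_0\}}$; but $\bigcap_{b\in\tau\cup\{b_0\}}b\neq\0$ entails $\bigcap_{b\in\rho\cup\{b_0\}}b\neq\0$ for every $\rho\sub\tau$, so each $y_{\rho\cup\{b_0\}}$ vanishes and $z_{r'}=z_r$. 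This contradicts $g\in A_D(K)$, hence $F(v)\neq 0$ for some $v\sub s_0$; choosing $T\in\cZ$ with $s\cap T=v$ then gives $\mu_{T\sm T_0}(g)\neq\mu_{T\cup T_0}(g)$ by~\eqref{eq:planDiff}, as required.

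The identity for $\mu_Q(g)$ is immediate from Lemma~\ref{analysing:1} and the bookkeeping with traces of~$\cZ$ is routine; the part needing the most care is the derivation of~\eqref{eq:planDiff} and, above all, the verification that the vanishing of all the $F(v)$ collapses the representation of~$g$ in precisely the form demanded by Definition~\ref{defi:AD}(ii) --- in particular that the nondegeneracy conditions $\bigcap_{b\in r}b\neq\0$ appearing there line up with the monotonicity $\bigcap_{b\in\tau\cup\{b_0\}}b\neq\0\Rightarrow\bigcap_{b\in\rho\cup\{b_0\}}b\neq\0$ for $\rho\sub\tau$.
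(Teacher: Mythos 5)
Your proof is correct. You target the same contradiction as the paper does --- namely that if $\mu_{T\sm T_0}(g)=\mu_{T\cup T_0}(g)$ for every $T\in\cZ$, then $s':=s\sm T_0=s\sm\{b_0\}$ violates condition~(ii) of Definition~\ref{defi:AD} --- but you reach it by a genuinely different computation. The paper stays with the $W$-representation and proves $z_r=z_{r'}$ by induction on $|r\cap s'|$, at each step constructing a tailored test set $T_1=T_0\cup\bigcup\{T\in D:\,T\cap s\sub r'\}$ and cancelling terms in $\mu_{T_1}(g)$ against $\mu_{T_1\sm T_0}(g)$ using the inductive hypothesis. You instead pass to the $J$-representation, observe via Lemma~\ref{analysing:1} that $\mu_Q(g)$ depends only on the trace $Q\cap s$, obtain the closed formula $\mu_{T\cup T_0}(g)-\mu_{T\sm T_0}(g)=\sum_{\rho\sub v}y_{\rho\cup\{b_0\}}\lambda\bigl(\bigcap_{b\in\rho\cup\{b_0\}}b\bigr)$ with $v=(s\cap T)\sm\{b_0\}$, and kill all coefficients at once by M\"obius inversion over $\mathcal{P}(s\sm\{b_0\})$, using that $\cZ$ realizes every trace on the finite set~$s$ and that nonzero elements of the measure algebra have positive measure. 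The key verifications all check out: the disjoint-union identities for the traces of $T\cup T_0$ and $T\sm T_0$ on~$s$, the monotonicity $\bigcap_{b\in\tau\cup\{b_0\}}b\neq\0\Rightarrow\bigcap_{b\in\rho\cup\{b_0\}}b\neq\0$ for $\rho\sub\tau$ that aligns the vanishing of the $y_{\rho\cup\{b_0\}}$ with the nondegeneracy clauses in~(ii), and the reduction of the case analysis to $\{r,r'\}\sub\{\tau,\tau\cup\{b_0\}\}$. Your route buys a cleaner, non-inductive argument and an explicit formula for the difference of measures; the paper's induction avoids the change of representation but is more delicate bookkeeping. Either proof is acceptable.
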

\begin{proof}
Our proof is by contradiction. Suppose there is $T_0\in D$ such that
\begin{equation}\label{equation:contrad}
	\mu_{T\sm T_0}(g)=\mu_{T\cup T_0}(g)
	\quad
	\mbox{for every }T\in \cZ.
\end{equation} 
Write $g=\sum_{r \sub s}z_{r}1_{\widehat{W(r,s\setminus r)}}$
as in Definition~\ref{defi:AD}.
Let $s':=s \sm T_0 \subsetneq s$. In order to reach a contradiction, we claim that
$$
	z_r=z_{r'} \quad \mbox{whenever} \quad r \cap s'=r' \cap s', \ \bigcap_{b\in r}b \neq \0
	\mbox{ and }\bigcap_{b\in r'}b \neq \0.
$$
Indeed, assume that $r\neq r'$ and proceed by induction on $|r \cap s'|=|r' \cap s'|$. 

Suppose first that $r \cap s'=r' \cap s'=\emptyset$.
Then we have $r=\emptyset$ and $r'=T_0 \cap s$ (or vice versa). By~\eqref{equation:contrad} we have
$\mu_{\emptyset}(g)=\mu_{T_0}(g)$. Note that 
$\mu_{\emptyset}(g)=z_{\emptyset}$ and, writing $T_0 \cap s=\{b_0\}$, we have 
$\mu_{T_0}(g)=z_{\emptyset}(1-\lambda(b_0))+z_{T_0\cap s}\lambda(b_0)$.
It follows that 
$$
	z_{\emptyset}=z_{\emptyset}(1-\lambda(b_0))+z_{T_0\cap s}\lambda(b_0),
$$ 
hence $z_\emptyset=z_{T_0\cap s}$, as required.

Suppose now that $r \cap s'=r'\cap s'\neq \emptyset$, together with $\bigcap_{b\in r}b\neq \0 \neq \bigcap_{b\in r'}b$, 
and the inductive hypothesis. 
Since $r\neq r'$, we have either $b_0\in r$ and $b_0\not \in r'$ or vice versa. We assume for instance that $b_0\in r$ and $b_0\not\in r'$. Then
$$
	r=\{b_0\}\cup(r\cap s') \quad \mbox{and} \quad
	r'=r\cap s'.
$$
By the inductive hypothesis, 
\begin{equation}\label{equation:ind}
	z_{r_0}=z_{r_0\cup \{b_0\}}
	\quad
	\mbox{for every }r_0\subsetneq r'.
\end{equation}
Set 
$$
	T_1:=T_0\cup \bigcup\{T\in D: \, T\cap s \sub r'\} \in \cZ
$$
and observe that $T_1\cap s=r$. Writing 
$$
	w(t,t'):=\bigcap_{b\in t}b\sm \bigcup_{b'\in t'}b'\in \algb
$$ 
for any pair of finite sets $t,t' \sub \algb^+$, we have
\begin{multline}\label{multline:muT1}
	\mu_{T_1}(g)=
	\sum_{r_0 \sub r}z_{r_0} \lambda\bigl(w(r_0,r\sm r_0)\bigr)=
	\sum_{r_0 \sub r'}z_{r_0} \lambda\bigl(w(r_0,r\sm r_0)\bigr)+ 
	\sum_{\substack{r_0\sub r \\ b_0\in r_0}}z_{r_0} \lambda\bigl(w(r_0,r'\sm r_0)\bigr)= \\ =
	z_{r'} \lambda\bigl(w(r',\{b_0\})\bigr)+
	\sum_{r_0 \subsetneq r'}z_{r_0} \lambda\bigl(w(r_0,r\sm r_0)\bigr) +\\
	+
	z_{r} \lambda\bigl(w(r,\emptyset)\bigr)+
	\sum_{r_0 \subsetneq r'}z_{r_0\cup\{b_0\}} \lambda\bigl(w(r_0\cup \{b_0\},r' \sm r_0) \bigr).
\end{multline}
For each $r_0 \subsetneq r'$, the elements
$w(r_0,r\sm r_0)$ and $w(r_0\cup \{b_0\},r'\sm r_0)$
are disjoint and their union is $w(r_0,r'\sm r_0)$, hence \eqref{equation:ind} yields
$$
	z_{r_0} \lambda\bigl(w(r_0,r'\sm r_0)\bigr)=
	z_{r_0} \lambda\bigl(w(r_0,r\sm r_0)\bigr)+
	z_{r_0\cup\{b_0\}} \lambda\bigl(w(r_0\cup \{b_0\},r' \sm r_0) \bigr).	
$$
From~\eqref{multline:muT1} it follows that
\begin{equation}\label{multline:after}
	\mu_{T_1}(g)=
	z_{r'} \lambda\bigl(w(r',\{b_0\})\bigr)+
	z_{r} \lambda\bigl(w(r,\emptyset)\bigr)
	+
	\sum_{r_0 \subsetneq r'}z_{r_0} \lambda\bigl(w(r_0,r'\sm r_0)\bigr).
\end{equation}
Bearing in mind that $(T_1 \sm T_0)\cap s=r'$, we also have
\begin{equation}\label{multline:after2}
	\mu_{T_1\sm T_0}(g)=
	\sum_{r_0 \sub r'}z_{r_0} \lambda\bigl(w(r_0,r'\sm r_0)\bigr)
	=z_{r'}\lambda\bigl(w(r',\emptyset)\bigr)+
	\sum_{r_0 \subsetneq r'}
	z_{r_0} \lambda\bigl(w(r_0,r'\sm r_0)\bigr).
\end{equation}
Since $\mu_{T_1\sm T_0}(g)=\mu_{T_1}(g)$ (by~\eqref{equation:contrad}), equalities
\eqref{multline:after} and~\eqref{multline:after2} yield
$$
	z_{r'} \lambda\bigl(w(r',\{b_0\})\bigr)+
	z_{r} \lambda\bigl(w(r,\emptyset)\bigr)=
	z_{r'}\lambda\bigl(w(r',\emptyset)\bigr),
$$
therefore $z_{r} \lambda(\bigcap_{b\in r}b)=z_{r'}\lambda(\bigcap_{b\in r}b)$
and so $z_{r}=z_{r'}$. This finishes the proof.
\end{proof}

\begin{remark}\label{remark:EqualMeasures}
\rm Let $g\in S(K)$ be written as $g=\sum_{r \sub s}z_{r}1_{\widehat{W(r,s\setminus r)}}$
for some finite set $s\sub \algb^+$ and $z_r\in \erre$.
If $T,T' \sub \algb^+$ satisfy $T\cap s=T'\cap s$, then $\mu_T(g)=\mu_{T'}(g)$.
\end{remark}
\begin{proof}
For every $r \sub s$ we have $r \sub T$ if and only if $r \sub T'$. In this case,
$$
	\mu_T(W(r,s))=\lambda\left(\bigcap_{b\in r}b \sm \bigcap_{b\in s\cap T}b\right)=\mu_{T'}(W(r,s)).
$$
Hence $\mu_T(g)=\sum_{r \sub T}z_r\mu_T(W(r,s))=\sum_{r \sub T'}z_r\mu_{T'}(W(r,s))=\mu_{T'}(g)$.
\end{proof}

\begin{lem}\label{lem:SDmeasurable}
Let $D$ be a finite partition of~$\algb^+$ with $D \sub \cZ$ and let $g\in S(K)$. Then $g\in A_D(K)$ if and only if the following two statements hold:
\begin{enumerate}
\item[($\star$)] for each $T_0\in D$ there is $T\in \cZ$ such that $\mu_{T\sm T_0}(g)\neq\mu_{T\cup T_0}(g)$;
\item[($\star\star$)] for each $T_0\in D$ and each finite partition $D_0$ of~$T_0$ with~$D_0 \sub\cZ$, there is
$Z_0\in D_0$ such that $\mu_{T\sm Z}(g)=\mu_{T\cup Z}(g)$ for every $Z\in D_0\sm \{Z_0\}$ and $T\in \cZ$.
\end{enumerate}
\end{lem}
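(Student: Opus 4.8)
The plan is to prove the two implications separately.

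For the forward direction, assume $g\in A_D(K)$. Condition~($\star$) is literally the conclusion of Lemma~\ref{lem:Condition2} (whose hypothesis $D\sub\cZ$ we have), so nothing has to be done there. For~($\star\star$), write $g=\sum_{r\sub s}z_r\,1_{\widehat{W(r,s\sm r)}}$ as in Definition~\ref{defi:AD}, so that $|T_0\cap s|=1$ for every $T_0\in D$. Fix $T_0\in D$ and a finite partition $D_0$ of $T_0$ with $D_0\sub\cZ$, and let $Z_0\in D_0$ be the block containing the unique element of $T_0\cap s$. Every other block $Z\in D_0\sm\{Z_0\}$ is contained in $T_0$ and misses that element, hence $Z\cap s=\emptyset$; then $(T\sm Z)\cap s=T\cap s=(T\cup Z)\cap s$ for every $T$, so Remark~\ref{remark:EqualMeasures} gives $\mu_{T\sm Z}(g)=\mu_T(g)=\mu_{T\cup Z}(g)$, which is~($\star\star$).

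For the reverse direction, assume~($\star$) and~($\star\star$). By Lemma~\ref{lem:simplification} I would fix a representation $g=\sum_{r\sub s}z_r\,1_{\widehat{W(r,s\sm r)}}$ with $s\sub\algb^+$ finite and satisfying condition~(ii) of Definition~\ref{defi:AD}; it then suffices to prove that $|T_0\cap s|=1$ for every $T_0\in D$, for then $g\in A_D(K)$ via this representation. Two facts are used repeatedly: by Remark~\ref{remark:EqualMeasures} the number $\mu_T(g)$ depends only on $T\cap s$, so we may put $M(u):=\mu_T(g)$ for any $T$ with $T\cap s=u$, and by Lemma~\ref{analysing:1} one has $M(u)=\sum_{r\sub u}z_r\,\lambda\bigl(\bigcap_{b\in r}b\sm\bigcup_{b'\in u\sm r}b'\bigr)$; moreover, since $s$ is finite and $\cZ$ separates the points of $\algb^+$, the family $\{T\cap s:T\in\cZ\}$ is all of $\mathcal{P}(s)$. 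If $T_0\cap s=\emptyset$ for some $T_0\in D$, then $(T\sm T_0)\cap s=T\cap s=(T\cup T_0)\cap s$ for all $T$, whence $\mu_{T\sm T_0}(g)=\mu_{T\cup T_0}(g)$, contradicting~($\star$); so $|T_0\cap s|\ge1$ for every $T_0\in D$. To rule out $|T_0\cap s|=m\ge2$: using that $T_0\in\cZ$ and that $\cZ$ separates points, build a finite partition $D_0$ of $T_0$ with $D_0\sub\cZ$ in which the $m$ points of $T_0\cap s$ lie in $m$ distinct blocks. Any block of $D_0$ disjoint from $s$ satisfies the $\mu$-equality for all $T$ (again by Remark~\ref{remark:EqualMeasures}, as in the forward direction), so the only blocks that could violate it are the $m$ blocks meeting $s$; hence~($\star\star$) forces at least $m-1\ge1$ of them to satisfy it. Pick one such block $Z_*$, let $b_*$ be the unique element of $Z_*\cap s$, feed $\mu_{T\sm Z_*}(g)=\mu_{T\cup Z_*}(g)$ ($T\in\cZ$) into $M$, and use $\{T\cap s:T\in\cZ\}=\mathcal{P}(s)$ to get $M(u)=M(u\cup\{b_*\})$ for every $u\sub s\sm\{b_*\}$. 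Expanding $M(u\cup\{b_*\})-M(u)$ — splitting each $r\sub u\cup\{b_*\}$ according to whether $b_*\in r$ and each relevant element of $\algb$ according to whether it is below $b_*$ — yields
\[
	0=\sum_{r_0\sub u}\bigl(z_{r_0\cup\{b_*\}}-z_{r_0}\bigr)\,\lambda\Bigl(\bigcap_{b\in r_0\cup\{b_*\}}b\sm\bigcup_{b'\in u\sm r_0}b'\Bigr)\qquad(u\sub s\sm\{b_*\}).
\]
Since $\lambda$ is strictly positive on $\algb^+$, an induction on $|u|$ (terms with $\bigcap_{b\in r_0\cup\{b_*\}}b=\0$ disappear, terms with $r_0\subsetneq u$ vanish by the inductive hypothesis, leaving only the $r_0=u$ term) gives $z_{r_0\cup\{b_*\}}=z_{r_0}$ whenever $\bigcap_{b\in r_0}b\neq\0$ and $\bigcap_{b\in r_0\cup\{b_*\}}b\neq\0$. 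This shows $z_r=z_{r'}$ whenever $r\cap(s\sm\{b_*\})=r'\cap(s\sm\{b_*\})$, $\bigcap_{b\in r}b\neq\0$ and $\bigcap_{b\in r'}b\neq\0$, contradicting condition~(ii) of Definition~\ref{defi:AD} with $s':=s\sm\{b_*\}\subsetneq s$. Hence $|T_0\cap s|=1$ for all $T_0\in D$, so $g\in A_D(K)$.

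The main obstacle is the case $m\ge2$ of the reverse direction: one must choose $D_0$ carefully so that~($\star\star$) delivers a single block $Z_*$ meeting $s$ in exactly one point $b_*$ together with the measure equality for $Z_*$, and then run the triangular (Möbius-type) argument turning that one equality into the redundancy of $b_*$, contradicting the minimality of the representation furnished by Lemma~\ref{lem:simplification}. Everything else is routine bookkeeping with Remark~\ref{remark:EqualMeasures} and Lemma~\ref{analysing:1}.
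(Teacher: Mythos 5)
Your proof is correct. The ``only if'' direction and the overall skeleton of the ``if'' direction (fix a representation that is minimal in the sense of Lemma~\ref{lem:simplification}, use ($\star$) together with Remark~\ref{remark:EqualMeasures} to see that every $T_0\in D$ meets $s$, then use ($\star\star$) to see that it meets $s$ exactly once) coincide with the paper's. You diverge only in how the ``at most once'' step is carried out. The paper refines $D$ to a partition $D'\sub\cZ$ with $|T'\cap s|=1$ for every $T'\in D'$, observes that the minimal representation then witnesses $g\in A_{D'}(K)$, and applies Lemma~\ref{lem:Condition2} to $D'$: every block of $D'$ contained in $T_0$ must violate the measure equality for some $T\in\cZ$, so ($\star\star$) forces $T_0$ to contain a single such block. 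You instead, assuming $|T_0\cap s|=m\ge 2$, extract from ($\star\star$) one block $Z_*$ with $Z_*\cap s=\{b_*\}$ for which $\mu_{T\sm Z_*}(g)=\mu_{T\cup Z_*}(g)$ holds for all $T\in\cZ$, and then rerun by hand the triangular computation that is the core of the paper's proof of Lemma~\ref{lem:Condition2}: your identity $M(u\cup\{b_*\})-M(u)=\sum_{r_0\sub u}(z_{r_0\cup\{b_*\}}-z_{r_0})\,\lambda\bigl(\bigcap_{b\in r_0\cup\{b_*\}}b\sm\bigcup_{b'\in u\sm r_0}b'\bigr)=0$, the fact that $\{T\cap s:T\in\cZ\}=\mathcal{P}(s)$, and the induction on $|u|$ are all sound, and they do yield that $b_*$ is redundant, contradicting minimality with $s'=s\sm\{b_*\}$. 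Both arguments work; the paper's route is shorter because it reuses Lemma~\ref{lem:Condition2} wholesale via the refinement trick, while yours avoids introducing $D'$ and checking $g\in A_{D'}(K)$ at the cost of essentially reproving a special case of that lemma inline.
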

\begin{proof} 
{\sc ``Only if'' part.} Suppose $g\in A_D(K)$ and write $g=\sum_{r \sub s}z_{r}1_{\widehat{W(r,s\setminus r)}}$ 
as in Definition~\ref{defi:AD}. Statement~($\star$) holds by Lemma~\ref{lem:Condition2}. To check~($\star\star$),
take $T_0\in D$ and fix a finite partition $D_0$ of~$T_0$ with~$D_0 \sub\cZ$. 
Since $T_0\cap s$ is a singleton,
there is $Z_0\in D_0$ such that $Z\cap s=\emptyset$ for every $Z\in D_0\sm \{Z_0\}$, and so for any $T\in \cZ$ we have
$(T\sm Z)\cap s=(T \cup Z) \cap s$, hence $\mu_{T \sm Z}(g)=\mu_{T\cup Z}(g)$ (Remark~\ref{remark:EqualMeasures}).

{\sc ``If'' part.} Write $g=\sum_{r \sub s}z_{r}1_{\widehat{W(r,s\setminus r)}}$
for some finite set $s\sub \algb^+$ and some collection of real numbers $\{z_r:r \sub s\}$.
By Lemma~\ref{lem:simplification}, this representation can be chosen in such a way that there is no $s' \subsetneq s$ such that 
$$
	z_r=z_{r'} \quad \mbox{whenever} \quad r \cap s'=r' \cap s', \ \bigcap_{b\in r}b \neq \0
	\mbox{ and }\bigcap_{b\in r'}b \neq \0.
$$ 
In order to prove that $g\in A_D(K)$ we only have to check that $|T_0\cap s|= 1$ for every $T_0\in D$. Observe first
that, for each $T_0\in D$, condition~($\star$) tells us that there is $T \in \cZ$ such that $\mu_{T\sm T_0}(g)\neq \mu_{T \cup T_0}(g)$, 
hence $(T \sm T_0)\cap s \neq (T\cup T_0)\cap s$ (Remark~\ref{remark:EqualMeasures}) and so $T_0 \cap s\neq \emptyset$.
Thus, we can find a finite partition $D' \sub \cZ$ of~$\algb^+$ finer than~$D$ such that $|T'\cap s|=1$ for every $T'\in D'$.
Therefore, $g\in A_{D'}(K)$.
 
Fix $T_0\in D$ and set $D_0:=\{T'\in D':T' \sub T_0\}$. 
By Lemma~\ref{lem:Condition2} applied to $g$ and~$D'$, for each $T'\in D_0$ there is $T''\in \cZ$ such that 
$\mu_{T''\sm T'}(g)\neq \mu_{T''\cup T'}(g)$. This fact and condition~($\star\star$) 
yield $|D_0|=1$, that is, $D_0=\{T_0\}$ and so $|T_0\cap s|=1$. As 
$T_0\in D$ is arbitrary, $g\in A_D(K)$ and the proof is over.
\end{proof}

\begin{cor}\label{cor:SDmeasurable}
Let $D$ be a finite partition of~$\algb^+$. Then $A_D(K)$ belongs to the trace of~$\Ba(C(K),w)$ on~$S(K)$.
\end{cor}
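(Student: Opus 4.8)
The plan is to reduce the statement to the case $D\sub\cZ$, which is already settled by Lemma~\ref{lem:SDmeasurable}. First I would observe that none of the results of this subsection proved so far (namely Lemma~\ref{lem:Condition2}, Remark~\ref{remark:EqualMeasures} and Lemma~\ref{lem:SDmeasurable}) uses anything about the fixed algebra~$\cZ$ beyond the fact that it is a countable algebra of subsets of~$\algb^+$ separating the points of~$\algb^+$. Hence, given an arbitrary finite partition $D$ of~$\algb^+$, I would enlarge $\cZ$ to the algebra $\cZ'$ generated by $\cZ$ together with the finitely many blocks of~$D$: this $\cZ'$ is again a countable algebra separating the points of~$\algb^+$ (it contains~$\cZ$ and it is generated by a countable algebra together with finitely many extra sets), and now $D\sub\cZ'$. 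Replacing $\cZ$ by~$\cZ'$, we may and do assume that $D\sub\cZ$.

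Under this assumption Lemma~\ref{lem:SDmeasurable} yields $A_D(K)=S(K)\cap M$, where
$$
	M:=\{g\in C(K): g\text{ satisfies conditions }(\star)\text{ and }(\star\star)\text{ of Lemma~\ref{lem:SDmeasurable}}\}.
$$
So it suffices to prove that $M\in\Ba(C(K),w)$, for then $A_D(K)=S(K)\cap M$ belongs to the trace of $\Ba(C(K),w)$ on~$S(K)$, as required.

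To check that $M\in\Ba(C(K),w)$ I would use that, for every $S\sub\algb^+$, the probability measure $\mu_S=\mu_S^1$ of Lemma~\ref{analysing:1}, being a measure on~$\alg$, extends to a Radon measure on $K=\ult(\alg)$ and hence defines an element of~$C(K)^*$; consequently so does $\mu_S-\mu_{S'}$ for all $S,S'\sub\algb^+$. Since $\Ba(C(K),w)$ is the $\sigma$-algebra on $C(K)$ generated by~$C(K)^*$, the set $\{g\in C(K):\mu_S(g)\neq\mu_{S'}(g)\}$ lies in $\Ba(C(K),w)$ for all $S,S'\sub\algb^+$. Now the set of $g$ satisfying~$(\star)$ is the finite intersection over $T_0\in D$ of the countable unions $\bigcup_{T\in\cZ}\{g:\mu_{T\sm T_0}(g)\neq\mu_{T\cup T_0}(g)\}$, hence it belongs to $\Ba(C(K),w)$. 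For~$(\star\star)$, note that for each $T_0\in D$ there are only countably many finite partitions $D_0$ of~$T_0$ with $D_0\sub\cZ$ (they are finite subsets of the countable set~$\cZ$); for each such $D_0$ the requirement that some $Z_0\in D_0$ satisfy $\mu_{T\sm Z}(g)=\mu_{T\cup Z}(g)$ for every $Z\in D_0\sm\{Z_0\}$ and every $T\in\cZ$ describes a finite union of countable intersections of sets of the above form, hence a $\Ba(C(K),w)$-set; intersecting over all such $D_0$ and over $T_0\in D$ shows that the set of $g$ satisfying~$(\star\star)$ also belongs to $\Ba(C(K),w)$. Therefore $M\in\Ba(C(K),w)$ and the proof is complete.

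I expect the only genuinely delicate point to be the reduction carried out in the first paragraph: one has to make sure that passing from~$\cZ$ to the larger countable algebra~$\cZ'$ does not disturb the validity of the auxiliary lemmas, which amounts to checking that none of their proofs used anything about~$\cZ$ other than its being a countable point-separating algebra. Everything else is the routine observation that $(\star)$ and $(\star\star)$ are obtained, by countably many Boolean $\sigma$-operations, from the sets $\{g:\mu_S(g)\neq\mu_{S'}(g)\}$, each of which is $\Ba(C(K),w)$-measurable because $\mu_S-\mu_{S'}\in C(K)^*$.
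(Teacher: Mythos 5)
Your proof is correct and follows essentially the same route as the paper: the paper's own argument is simply ``enlarge $\cZ$ if necessary so that $D\sub\cZ$, then apply Lemma~\ref{lem:SDmeasurable} together with the countability of~$\cZ$'', and your write-up just makes explicit the (routine) verification that conditions ($\star$) and ($\star\star$) are countable Boolean combinations of sets of the form $\{g:\mu_S(g)\neq\mu_{S'}(g)\}$, each of which lies in $\Ba(C(K),w)$ because $\mu_S-\mu_{S'}\in C(K)^*$. Your cautionary remark about the enlargement of~$\cZ$ is well taken but harmless here, since $A_D(K)$ is defined independently of~$\cZ$ and the auxiliary lemmas use only that $\cZ$ is a countable point-separating algebra.
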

\begin{proof}
We can assume without loss of generality (by enlarging~$\cZ$ if necessary) that $D \sub \cZ$. 
Since~$\cZ$ is countable, Lemma~\ref{lem:SDmeasurable} gives the result.
\end{proof}

Our next task is to prove that, under the assumption that~$\mathfrak{c}$ is a Kunen cardinal, 
the restriction of the supremum norm to any set of the form~$A_D(K)$ is 
relatively $\Ba(C(K),w)$-measurable (Lemma~\ref{lem:NormRestriction}).

\begin{lem}\label{lem:Phi}
Let $\Phi:\algb^+ \to 2^{\cZ}$ be the mapping defined by $\Phi(b):=(1_T(b))_{T\in \cZ}$. Set $\Omega:=\Phi(\algb^+)$ and let $\Sigma$
be the trace of ${\rm Borel}(2^{\cZ})$ on~$\Omega$. Then for each $n\in \Nat$ the mapping
$$
	\Phi^n: ((\algb^+)^n,\otimes_n\sigma(\cZ)) \to (\Omega^n,\otimes_n\Sigma),
	\quad
	\Phi^n(b_1,\dots,b_n):=(\Phi(b_1),\dots,\Phi(b_n)),
$$
is an isomorphism of measurable spaces.
\end{lem}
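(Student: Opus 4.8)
The plan is to reduce everything to the case $n=1$: I will first show that $\Phi$ is a bijection which is an isomorphism between the measurable spaces $(\algb^+,\sigma(\cZ))$ and $(\Omega,\Sigma)$, and then deduce the statement for arbitrary~$n$ via the routine fact that a finite product of measurable isomorphisms is again a measurable isomorphism.

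\emph{Step 1 (bijectivity).} Since $\cZ$ separates the points of~$\algb^+$, for any $b\neq b'$ in $\algb^+$ there is $T\in\cZ$ with, say, $b\in T$ and $b'\notin T$, so that $\Phi(b)$ and $\Phi(b')$ differ in their $T$-th coordinate; hence $\Phi$ is injective. By the definition of~$\Omega$, the map $\Phi:\algb^+\to\Omega$ is therefore a bijection, and consequently so is $\Phi^n:(\algb^+)^n\to\Omega^n$, with inverse $(\Phi^{-1})^n$.

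\emph{Step 2 ($\Phi$ is a measurable isomorphism).} For $T\in\cZ$ let $\pi_T:2^\cZ\to\{0,1\}$ be the $T$-th coordinate projection, so that $\pi_T\circ\Phi=1_T$. Because $\cZ$ is countable, $2^\cZ$ is second countable and ${\rm Borel}(2^\cZ)=\sigma(\{\pi_T:T\in\cZ\})$; consequently $\Sigma$, being the trace of ${\rm Borel}(2^\cZ)$ on~$\Omega$, is generated by the sets $\Omega\cap\pi_T^{-1}(\{1\})$, $T\in\cZ$. Since $\Phi$ maps into~$\Omega$, we have $\Phi^{-1}\bigl(\Omega\cap\pi_T^{-1}(\{1\})\bigr)=(\pi_T\circ\Phi)^{-1}(\{1\})=1_T^{-1}(\{1\})=T\in\cZ\sub\sigma(\cZ)$, so $\Phi$ is $(\sigma(\cZ),\Sigma)$-measurable. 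For the converse direction, note that, $\Phi$ being a bijection onto~$\Omega$, the family $\{A\sub\algb^+:\Phi(A)\in\Sigma\}$ is a $\sigma$-algebra on~$\algb^+$ (direct images under a bijection commute with complements and countable unions); it contains~$\cZ$, because $\Phi(T)=\Omega\cap\pi_T^{-1}(\{1\})\in\Sigma$ for each $T\in\cZ$, hence it contains $\sigma(\cZ)$. Equivalently, $\Phi^{-1}$ is $(\Sigma,\sigma(\cZ))$-measurable, and therefore $\Phi$ is an isomorphism of measurable spaces.

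\emph{Step 3 (passing to powers).} Fix $n\in\Nat$. For $A_1,\dots,A_n\in\Sigma$ we have $(\Phi^n)^{-1}(A_1\times\dots\times A_n)=\Phi^{-1}(A_1)\times\dots\times\Phi^{-1}(A_n)\in\otimes_n\sigma(\cZ)$ by Step~2; as the rectangles $A_1\times\dots\times A_n$ generate $\otimes_n\Sigma$, it follows that $\Phi^n$ is $(\otimes_n\sigma(\cZ),\otimes_n\Sigma)$-measurable. Symmetrically, for $B_1,\dots,B_n\in\sigma(\cZ)$ one has $\Phi^n(B_1\times\dots\times B_n)=\Phi(B_1)\times\dots\times\Phi(B_n)\in\otimes_n\Sigma$ by Step~2, and since these rectangles generate $\otimes_n\sigma(\cZ)$ we conclude that $(\Phi^n)^{-1}=(\Phi^{-1})^n$ is measurable as well. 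Hence $\Phi^n$ is an isomorphism of measurable spaces. No step here is a genuine obstacle: the only substantive ingredients are the separation property of~$\cZ$ (used for injectivity of~$\Phi$ in Step~1) and the elementary behaviour of direct images under bijections (used in Step~2).
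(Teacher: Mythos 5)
Your proposal is correct and follows essentially the same route as the paper: reduce to the case $n=1$, get injectivity from the fact that $\cZ$ separates points, get measurability of $\Phi$ from $\Phi^{-1}(\Omega\cap\pi_T^{-1}(\{1\}))=T$, and get measurability of $\Phi^{-1}$ from $\Phi(T)=\Omega\cap\pi_T^{-1}(\{1\})\in\Sigma$. You merely spell out the generation arguments and the passage to finite powers, which the paper leaves implicit.
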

\begin{proof}
It suffices to prove the case $n=1$. Clearly, $\Phi$ is one-to-one (because $\cZ$ separates the points of~$\algb^+$) and
$\sigma(\cZ)$-$\Sigma$-measurable. On the other hand, for each $T_0\in \cZ$ we have
$$
	\Phi(T_0)=\{(1_T(b))_{T\in \cZ}: \, b\in T_0\}=\Omega \cap \{(x_T)_{T\in \cZ}\in 2^{\cZ}: \, x_{T_0}=1\}\in \Sigma,
$$
hence $\Phi^{-1}$ is $\Sigma$-$\sigma(\cZ)$-measurable.
\end{proof}

\begin{lem}\label{lem:Function}
Let $D$ be a finite partition of~$\algb^+$ with $D\sub \cZ$, let $T_0\in D$ and $T\in \cZ$. Let $g\in A_D(K)$. Write
$g=\sum_{r \sub s}z_{r}1_{\widehat{W(r,s\setminus r)}}$ as in Definition~\ref{defi:AD} and $T_0\cap s=\{b_0\}$.
\begin{enumerate}
\item The following statements are equivalent:
\begin{enumerate}
\item[(i)] $b_0\in T$;
\item[(ii)] $\mu_{\bar{T}}(g)=\mu_{\bar{T}\cup T_0}(g)$ for every $\bar{T}\in \cZ$ such that $\bar{T}\cap T_0=T\cap T_0$.
\end{enumerate}
\item The following statements are equivalent:
\begin{enumerate}
\item[(i')] $b_0 \not\in T$;
\item[(ii')] $\mu_{\bar{T}}(g)=\mu_{\bar{T}\sm T_0}(g)$ for every $\bar{T}\in \cZ$ such that $\bar{T}\cap T_0=T\cap T_0$.
\end{enumerate}
\end{enumerate}
\end{lem}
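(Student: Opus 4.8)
The statement is a pair of symmetric equivalences relating a membership condition "$b_0 \in T$" to a measure-theoretic condition involving $\mu_{\bar T}(g)$, $\mu_{\bar T \cup T_0}(g)$ and $\mu_{\bar T \sm T_0}(g)$ as $\bar T$ ranges over those elements of $\cZ$ with $\bar T \cap T_0 = T \cap T_0$. Since the two parts are formally symmetric (swap the roles of "$b_0 \in T$" versus "$b_0 \notin T$", and of "$\cup T_0$" versus "$\sm T_0$"), I would prove part~(1) in detail and obtain part~(2) by the same argument. The key observation that makes everything go is \textbf{Remark~\ref{remark:EqualMeasures}}: since $g=\sum_{r\sub s}z_r 1_{\widehat{W(r,s\sm r)}}$, the value $\mu_T(g)$ depends only on $T\cap s$. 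In particular $\mu_{\bar T}(g)$ and $\mu_{\bar T\cup T_0}(g)$ coincide whenever $\bar T\cap s=(\bar T\cup T_0)\cap s$, i.e.\ whenever $T_0\cap s=\{b_0\}\sub \bar T$, i.e.\ whenever $b_0\in \bar T$.

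\textbf{Proof of (i)$\Rightarrow$(ii).} Suppose $b_0\in T$ and let $\bar T\in \cZ$ satisfy $\bar T\cap T_0=T\cap T_0$. Since $b_0\in T_0$ and $b_0\in T$, we get $b_0\in T\cap T_0=\bar T\cap T_0\sub \bar T$. Hence $T_0\cap s=\{b_0\}\sub \bar T$, so $\bar T\cap s=(\bar T\cup T_0)\cap s$, and Remark~\ref{remark:EqualMeasures} gives $\mu_{\bar T}(g)=\mu_{\bar T\cup T_0}(g)$. This direction is immediate.

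\textbf{Proof of (ii)$\Rightarrow$(i), i.e.\ the contrapositive.} Suppose $b_0\notin T$; I must produce some $\bar T\in \cZ$ with $\bar T\cap T_0=T\cap T_0$ but $\mu_{\bar T}(g)\neq \mu_{\bar T\cup T_0}(g)$. The natural candidate is $\bar T:=T\sm T_0$ (which lies in $\cZ$ since $\cZ$ is an algebra containing $T$ and $T_0$, and clearly $\bar T\cap T_0=\emptyset$; but wait — we need $\bar T\cap T_0=T\cap T_0$, and $T\cap T_0$ need not be empty). So instead I must keep $T\cap T_0$ fixed: take $\bar T:=(T\sm T_0)\cup(T\cap T_0)=T$ itself, or more flexibly any $\bar T$ agreeing with $T$ on $T_0$ and adjusted off $T_0$. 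Here is where the \emph{minimality} property~(ii) of the representation in Definition~\ref{defi:AD} — together with $|T_0\cap s|=1$ — must be used: it is precisely condition~($\star$) of Lemma~\ref{lem:SDmeasurable}, applied inside $T_0$, that guarantees $b_0$ genuinely affects $g$. Concretely, I expect the argument to run as in Lemma~\ref{lem:Condition2}: because $|T_0\cap s|=1$ and the representation cannot be simplified via $s':=s\sm T_0$, there must exist $\bar T\in \cZ$ witnessing $\mu_{\bar T\sm T_0}(g)\neq \mu_{\bar T\cup T_0}(g)$; and since $b_0\notin T$ forces $\bar T\cap s$ to ``see'' the $T_0$-side the same way whether or not we remove $T_0$, one can arrange the desired inequality while holding $\bar T\cap T_0=T\cap T_0$. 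Part~(2) follows by the mirror-image argument, interchanging the roles of adding and deleting $T_0$ and of $b_0\in T$ and $b_0\notin T$.

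\textbf{Main obstacle.} The easy half of each equivalence is a one-line consequence of Remark~\ref{remark:EqualMeasures}. The genuine work is the hard direction: extracting, from the fact that $b_0$ lies on the ``wrong'' side of $T$, a single element $\bar T$ of the \emph{countable} algebra $\cZ$ that both respects the constraint $\bar T\cap T_0=T\cap T_0$ and detects the difference $\mu_{\bar T}(g)\neq\mu_{\bar T\cup T_0}(g)$. This is exactly where the non-simplifiability clause of Definition~\ref{defi:AD} (equivalently $|T_0\cap s|=1$ plus the Lemma~\ref{lem:simplification}-type minimality) enters, and I expect the bookkeeping to parallel the inductive disjointness computation in the proof of Lemma~\ref{lem:Condition2}, using that $\bigcap_{b\in r}b\neq\0$ for the relevant $r\sub s$ so that the measure $\lambda(\bigcap_{b\in r}b)$ is strictly positive and cannot be cancelled.
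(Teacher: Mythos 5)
Your proposal is correct and takes essentially the same route as the paper: the forward implications are the same one-line application of Remark~\ref{remark:EqualMeasures}, and the converse rests, exactly as in the paper, on taking the witness $T^*$ from Lemma~\ref{lem:Condition2} and modifying it off $T_0$ to agree with $T$ on $T_0$ (explicitly $\bar T:=(T^*\sm T_0)\cup(T\cap T_0)$, which leaves $\bar T\cup T_0=T^*\cup T_0$ and $\bar T\sm T_0=T^*\sm T_0$ unchanged while forcing $\bar T\cap T_0=T\cap T_0$). The only differences are organizational: the paper notes that, since (i) and (i') are complementary and the easy implications are already established, it suffices to show that (ii) and (ii') cannot hold simultaneously---which dispatches both converses with the single $\bar T$ above---whereas you argue each contrapositive directly, and your closing step (``one can arrange the desired inequality'') is precisely this verification, which you assert rather than write out; it does go through via Remark~\ref{remark:EqualMeasures}, since $b_0\notin\bar T$ gives $\mu_{\bar T}(g)=\mu_{\bar T\sm T_0}(g)=\mu_{T^*\sm T_0}(g)\neq\mu_{T^*\cup T_0}(g)=\mu_{\bar T\cup T_0}(g)$.
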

\begin{proof} 
(i)$\impli$(ii) Let $\bar{T}\in \cZ$ be such that $\bar{T}\cap T_0=T\cap T_0$.
Since $b_0\in T$ by assumption, we have $b_0\in T\cap T_0 \sub \bar{T}$ and so
$(\bar{T}\cup T_0)\cap s=\bar{T}\cap s$. Bearing in mind Remark~\ref{remark:EqualMeasures}, we get
$\mu_{\bar{T}}(g)=\mu_{\bar{T}\cup T_0}(g)$. A similar argument yields (i')$\impli$(ii').

Now, in order to prove (ii)$\impli$(i) and (ii')$\impli$(i'), it is enough to check that statements (ii) and (ii') cannot hold
simultaneously. To this end, pick $T^*\in \cZ$ such that $\mu_{T^*\sm T_0}(g)\neq\mu_{T^*\cup T_0}(g)$
(we apply Lemma~\ref{lem:Condition2}) and set 
$$
	\bar{T}:=(T^*\sm T_0)\cup (T\cap T_0)\in \cZ.
$$ 
Clearly, $\bar{T}\cup T_0=T^*\cup T_0$ and $\bar{T}\sm T_0=T^*\sm T_0$, 
hence we have either $\mu_{\bar{T}\cup T_0}(g)\neq \mu_{\bar{T}}(g)$ or $\mu_{\bar{T}\sm T_0}(g)\neq \mu_{\bar{T}}(g)$.
Since $\bar{T}\cap T_0=T\cap T_0$, this shows that
either (ii) or (ii') fails.
\end{proof}

\begin{remark}\label{remark:composition}
Let $D=\{T_1,\dots,T_n\}$ be a finite partition of~$\algb^+$ with $D\sub \cZ$.
\begin{enumerate}
\item Let $i\in \{1,\dots,n\}$. Given $T\in \cZ$, since statements (ii) and (ii') in 
Lemma~\ref{lem:Function} (applied to~$T_i$) are independent of the representation of~$g\in A_D(K)$, there is 
a mapping $\psi_{D,T_i,T}: A_D(K) \to \{0,1\}$ 
such that, for any $g=\sum_{r \sub s}z_{r}1_{\widehat{W(r,s\setminus r)}}$ as in Definition~\ref{defi:AD}
and writing $T_i\cap s=\{b_i\}$, we have 
$$
	\psi_{D,T_i,T}(g):=
	\begin{cases}
	1 & \text{if $b_i \in T$},\\
	0 & \text{if $b_i\not\in T$}.
	\end{cases}
$$
$\psi_{D,T_i,T}$ is measurable with respect to the trace
of $\Ba(C(K),w)$ on~$A_D(K)$, thanks to Lemma~\ref{lem:Function} (applied to~$T_i$).
Define $\psi_{D,T_i}: A_D(K) \to 2^{\cZ}$ by $\psi_{D,T_i}(g):=(\psi_{D,T_i,T}(g))_{T\in \cZ}$. 
Observe that 
$$
	\psi_{D,T_i}(A_D(K)) \sub \Omega,
$$
because for any $g\in A_D(K)$ as above we have $\psi_{D,T_i}(g)=\Phi(b_i)$.

\item Thus, we can consider the mapping
$$
	\psi_D:A_D(K) \to \Omega^n, \quad
	\psi_D(g):=(\psi_{D,T_1}(g),\dots,\psi_{D,T_n}(g)).
$$
Clearly, $\psi_D$ is measurable with respect to $\otimes_n \Sigma$ and the trace
of $\Ba(C(K),w)$ on~$A_D(K)$.

\item Let $P \sub \{1,\dots,n\}$. Define 
$\zeta_{n,P}: (\algb^+)^n \to \erre$ by
$$
	\zeta_{n,P}(b'_1,\dots,b'_n):=\lambda\left(\bigcap_{i\in P}b'_i\right).
$$
Then the mapping
$L_{D,P}: A_D(K) \to \erre$ given by $L_{D,P}:=\zeta_{n,P}\circ(\Phi^n)^{-1}\circ\psi_D$ satisfies
$L_{D,P}(g)=\lambda(\bigcap_{i\in P}b_i)$ for every $g\in A_D(K)$ as above.
\end{enumerate}
\end{remark}

From now on we deal with the additional assumption that $\mathfrak{c}$ is a Kunen cardinal.

\begin{lem}\label{lem:zetaP}
Suppose $\mathfrak{c}$ is a Kunen cardinal. Then there is a countable algebra $\cZ_0$
on~$\algb^+$ separating the points of~$\algb^+$ such that, for each $n\in \Nat$ and $P \sub \{1,\dots,n\}$,
the mapping $\zeta_{n,P}$ is $\otimes_n \sigma(\cZ_0)$-measurable.
\end{lem}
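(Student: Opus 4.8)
The plan is to use the Kunen hypothesis only to collapse the full power set $\mathcal P((\algb^+)^n)$ onto a product $\sigma$-algebra, and then carry out a routine countability reduction. As observed in Section~\ref{example}, every Baire subset of $2^\kappa$ is determined by countably many coordinates, so $\algb$ and hence $\algb^+$ have cardinality $\mathfrak c$. Fixing a bijection of $\algb^+$ with $\mathfrak c$ and using that $\mathfrak c$ is a Kunen cardinal, we get $\mathcal P((\algb^+)^n)=\otimes_n\mathcal P(\algb^+)$ for every $n\in\Nat$. In particular, each $\zeta_{n,P}$ — being an arbitrary real-valued function on a set carrying its full power set — is $\otimes_n\mathcal P(\algb^+)$-measurable; equivalently, $\zeta_{n,P}^{-1}((q,\infty))\in\otimes_n\mathcal P(\algb^+)$ for every $q\in\rationals$.

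Next I would invoke the folklore fact that if a set $E$ lies in the $\sigma$-algebra $\sigma(\mathcal F)$ generated by a family $\mathcal F$ of subsets of some set, then $E\in\sigma(\mathcal C)$ for some \emph{countable} subfamily $\mathcal C\sub\mathcal F$; indeed, the union of $\sigma(\mathcal C)$ over all countable $\mathcal C\sub\mathcal F$ is already a $\sigma$-algebra containing $\mathcal F$. Applying this with $\mathcal F$ the family of all ``rectangles'' $A_1\times\dots\times A_n$ with $A_i\sub\algb^+$, for each $n\in\Nat$, each $P\sub\{1,\dots,n\}$ and each $q\in\rationals$ we obtain a countable family $\mathcal G_{n,P,q}\sub\mathcal P(\algb^+)$ (the collection of all sides of the rectangles appearing in the chosen countable $\mathcal C$) with $\zeta_{n,P}^{-1}((q,\infty))\in\otimes_n\sigma(\mathcal G_{n,P,q})$. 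Since there are only countably many triples $(n,P,q)$, the union $\mathcal G:=\bigcup_{n,P,q}\mathcal G_{n,P,q}$ is a countable family of subsets of $\algb^+$.

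Finally I would let $\cZ_0$ be the algebra of subsets of $\algb^+$ generated by $\mathcal G$ together with a fixed countable point-separating algebra (e.g.\ the one transferred from the clopen subsets of $2^\Nat$, as in the proof of Theorem~\ref{theo:XXX}); being generated by countably many sets, $\cZ_0$ is countable, and it separates the points of $\algb^+$. Since $\mathcal G_{n,P,q}\sub\cZ_0$ we have $\sigma(\mathcal G_{n,P,q})\sub\sigma(\cZ_0)$, hence $\zeta_{n,P}^{-1}((q,\infty))\in\otimes_n\sigma(\cZ_0)$ for every $q\in\rationals$; as $\{(q,\infty):q\in\rationals\}$ generates $\mathrm{Borel}(\erre)$, each $\zeta_{n,P}$ is $\otimes_n\sigma(\cZ_0)$-measurable. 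The only real content of the argument is the first step: the reduction $\mathcal P((\algb^+)^n)=\otimes_n\mathcal P(\algb^+)$ is precisely where the assumption that $\mathfrak c$ is a Kunen cardinal is needed, and it is what makes the statement consistency-sensitive; everything afterwards is bookkeeping with countable families.
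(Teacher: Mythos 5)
Your proof is correct and follows essentially the same route as the paper's: the Kunen hypothesis gives $\mathcal{P}((\algb^+)^n)=\otimes_n\mathcal{P}(\algb^+)$ so that each $\zeta_{n,P}$ is automatically product-measurable, then a standard countability reduction produces a countable family whose generated algebra (enlarged to a countable point-separating algebra $\cZ_0$) does the job. You merely make explicit the countable-subfamily argument that the paper leaves implicit.
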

\begin{proof}
Since $|\algb^+|=\mathfrak{c}$ is a Kunen cardinal, each $\zeta_{n,P}$ is $\otimes_n \mathcal{P}(\algb^+)$-measurable.
Thus, we can find a countable family $\cC$ of subsets of~$\algb^+$ such that  
$\zeta_{n,P}$ is $\otimes_n \sigma(\cC)$-measurable for every $n\in \N$ and every $P \sub \{1,\dots,n\}$. Now, it is enough
to choose any countable algebra~$\cZ_0$ on~$\algb^+$
which separates the points of~$\algb^+$ and contains~$\mathcal{C}$.
\end{proof}

\begin{lem}\label{lem:NormRestriction}
Suppose $\mathfrak{c}$ is a Kunen cardinal.
Let $D$ be a finite partition of~$\algb^+$. Then the restriction of the supremum norm to~$A_D(K)$ is measurable
with respect to the trace of $\Ba(C(K),w)$ on $A_D(K)$.
\end{lem}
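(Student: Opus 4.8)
The plan is to express $\|\cdot\|$ on $A_D(K)$, through an explicit formula, in terms of the measurable auxiliary functions already at our disposal. Write $D=\{T_1,\dots,T_n\}$ and, as in the proof of Corollary~\ref{cor:SDmeasurable}, assume (after enlarging the fixed countable algebra $\cZ$ if necessary) that $D\sub\cZ$; moreover assume that $\cZ$ contains the countable algebra $\cZ_0$ furnished by Lemma~\ref{lem:zetaP} --- this is the only point at which the hypothesis that $\con$ is a Kunen cardinal is used. For $P\sub\{1,\dots,n\}$ put $D_P:=\{T_i:i\in P\}\sub D$. Given $g\in A_D(K)$, fix a representation $g=\sum_{r\sub s}z_r\,1_{\widehat{W(r,s\sm r)}}$ as in Definition~\ref{defi:AD}, so that $s=\{b_1,\dots,b_n\}$ with $b_i\in T_i$; since $|T_i\cap s|=1$ for every $i$, converting this to the form $g=\sum_{r\sub s}y_r\,1_{\widehat{J(r)}}$ (as in the proof of Lemma~\ref{analysing:2}, keeping the same $s$) displays $g$ as an element of $S_D(K)$. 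For $P\sub\{1,\dots,n\}$ abbreviate $r_P:=\{b_i:i\in P\}$, $y_P:=y_{r_P}$, $w_P:=\lambda(\bigcap_{i\in P}b_i)$ and $v_P:=\sum_{P'\sub P}y_{P'}$.

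The first step is to record the norm formula
\[
	\|g\|=\max\bigl\{\,|v_P|:\ P\sub\{1,\dots,n\},\ w_P>0\,\bigr\}.
\]
This is pure bookkeeping: the nonempty members of $\{\widehat{W(r_P,s\sm r_P)}:P\sub\{1,\dots,n\}\}$ form a clopen partition of $K$; evaluating $g=\sum_{r\sub s}y_r\,1_{\widehat{J(r)}}$ at a point of $\widehat{W(r_P,s\sm r_P)}$ shows that $g$ equals the constant $v_P$ there; and by Lemma~\ref{lem:zero} one has $\widehat{W(r_P,s\sm r_P)}\neq\emptyset$ iff $\bigcap_{i\in P}b_i\neq\0$, which --- since $\algb$ is a \emph{measure} algebra --- is equivalent to $w_P>0$. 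The second step is to check that the two kinds of quantities occurring above depend measurably on $g$. For the numbers $w_P$: by Remark~\ref{remark:composition}(3) the function $\ell_P:=L_{D,P}=\zeta_{n,P}\circ(\Phi^n)^{-1}\circ\psi_D$ satisfies $\ell_P(g)=w_P$, and it is measurable with respect to the trace of $\Ba(C(K),w)$ on $A_D(K)$, because $\psi_D$ is measurable into $(\Omega^n,\otimes_n\Sigma)$ (Remark~\ref{remark:composition}(2)), $(\Phi^n)^{-1}$ is measurable into $((\algb^+)^n,\otimes_n\sigma(\cZ))$ (Lemma~\ref{lem:Phi}), and $\zeta_{n,P}$ is $\otimes_n\sigma(\cZ)$-measurable since $\cZ\supseteq\cZ_0$ (Lemma~\ref{lem:zetaP}). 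For the numbers $v_P$: note that $C_{r_{P'}}=D_{P'}$ for every $P'$, because $D$ is a partition and $b_i\in T_i$; hence, whenever $J(r_{P'})\neq\0$, Lemma~\ref{analysing:3}(iv) gives $\theta_{D_{P'}}(g)=y_{P'}$, and each $\theta_{D_{P'}}$ is $\Ba(C(K),w)$-measurable.

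Finally I would assemble these pieces. Put $E_P:=\{g\in A_D(K):\ell_P(g)>0\}$, which lies in the trace of $\Ba(C(K),w)$ on $A_D(K)$. If $g\in E_P$ then $w_P>0$, so $w_{P'}\ge w_P>0$ for every $P'\sub P$ (as $\bigcap_{i\in P'}b_i\supseteq\bigcap_{i\in P}b_i$), hence $J(r_{P'})\neq\0$ and $\theta_{D_{P'}}(g)=y_{P'}$; therefore $v_P=\sum_{P'\sub P}\theta_{D_{P'}}(g)$ holds on $E_P$, so $g\mapsto v_P$ is measurable on $E_P$. Since $w_\emptyset=\lambda(\1)=1$, we have $E_\emptyset=A_D(K)$, so the formula of the first step becomes
\[
	\|g\|=\max\bigl\{\,|v_P|:\ P\sub\{1,\dots,n\},\ g\in E_P\,\bigr\}\qquad(g\in A_D(K)),
\]
a pointwise maximum over a nonempty subfamily of the finitely many functions $|v_P|$, each measurable on the measurable set $E_P$. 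Consequently $\{g\in A_D(K):\|g\|>\alpha\}=\bigcup_{P}\{g\in E_P:|v_P|>\alpha\}$ is, for every $\alpha\in\erre$, in the trace of $\Ba(C(K),w)$ on $A_D(K)$, which is the assertion.

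I expect the only real difficulty to be of a bookkeeping nature: making sure that the index set of the maximum in the first step is exactly $\{P:w_P>0\}$, so that the ``invisible'' coefficients --- which the representation in Definition~\ref{defi:AD} does not determine --- never contribute, and that on each $E_P$ every auxiliary number $y_{P'}$ with $P'\sub P$ is genuinely recoverable via $\theta_{D_{P'}}$. It is precisely the need to detect $\{w_P>0\}$ in a measurable fashion (the value $\theta_{D_P}(g)$ cannot distinguish $w_P=0$ from the case $w_P>0$, $y_P=0$) that forces the use of $L_{D,P}$, and hence of the Kunen-cardinal hypothesis.
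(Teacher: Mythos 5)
Your proposal is correct and follows essentially the same route as the paper: both arguments recover the coefficients $y_{P'}$ via $\theta_{C_{r_{P'}}}$ (Lemma~\ref{analysing:3}(iv)), detect the nonvanishing of $J(r_P)$ via $L_{D,P}$ (which is exactly where Lemma~\ref{lem:zetaP} and the Kunen hypothesis enter), and express $\|g\|$ as a finite maximum of $|\sum_{P'\sub P}y_{P'}|$ over those $P$ with $\bigcap_{i\in P}b_i\neq\0$. The only cosmetic difference is that the paper writes this as a single global formula $\sup_P\bigl|\sum_{Q\sub P}\theta_{C(Q)}(g)\bigr|\cdot N_{D,P}(g)$ with an indicator $N_{D,P}$, whereas you restrict to the measurable sets $E_P$; these are equivalent.
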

\begin{proof} Write $D=\{T_1,\dots,T_n\}$. We can suppose without loss of generality
(by enlarging~$\cZ$ if necessary) that $D\sub \cZ$ and that all functions $\zeta_{n,P}$ are $\otimes_n \sigma(\cZ)$-measurable
(see Lemma~\ref{lem:zetaP}). For each $P \sub \{1,\dots,n\}$, define
$$
	N_{D,P}:A_D(K) \to \{0,1\},
	\quad
	N_{D,P}(g):=
	\begin{cases}
	1 & \text{if $L_{D,P}(g)\neq 0$,}\\
	0 & \text{if $L_{D,P}(g)= 0$.}\\
	\end{cases}
$$
Since $L_{D,P}$ is measurable with respect to the trace of $\Ba(C(K),w)$ on~$A_D(K)$
(combine Lemma~\ref{lem:Phi} and Remark~\ref{remark:composition}), the same holds for~$N_{D,P}$.

Fix $g\in A_D(K)$ and write 
$$
	g=\sum_{r \sub s} y_r 1_{\widehat{J(r)}}
$$
for some $s\sub \algb^+$ finite with $|T_i\cap s|=1$ for every $i\in \{1,\dots,n\}$ and some collection of real numbers
$\{y_r:r\sub s\}$ (see the proof of Lemma~\ref{analysing:2}). 
Lemma~\ref{analysing:3}(iv) ensures that
$$
	y_r=\theta_{C_r}(g) \quad
	\mbox{for every }r \sub s \mbox{ with }J(r)\neq \0.
$$
Hence $g=\sum_{r \sub s} \theta_{C_r}(g) 1_{\widehat{J(r)}}$
and therefore
\begin{equation}\label{equation:SuperRepresentation}
	g=\sum_{r' \sub s} \left(\sum_{r \sub r'} \theta_{C_r}(g)\right) 1_{\widehat{W(r',s\sm r')}}
\end{equation}
(see again the proof of Lemma~\ref{analysing:2}). 
Write $T_i\cap s=\{b_i\}$ for every $i\in \{1,\dots,n\}$ and
observe that for each $P \sub \{1,\dots,n\}$
we have
$$
	N_{D,P}(g)=
	\begin{cases}
	1 & \text{if $\bigcap_{i\in P}b_i\neq \0$,}\\
	0 & \text{if $\bigcap_{i\in P}b_i= \0$.}
	\end{cases}
$$
Define $C(Q):=\{T_i:i\in Q\}$ for every $Q \sub \{1,\dots,n\}$. 
From~\eqref{equation:SuperRepresentation} it follows that
$$
	\|g\|=\sup_{\substack{r' \sub s\\J(r')\neq \0}}
	\left|\sum_{r \sub r'}\theta_{C_r}(g)\right|=
	\sup_{P \sub \{1,\dots,n\}} \left|\sum_{Q \sub P} \theta_{C(Q)}(g) \right|\cdot N_{D,P}(g).
$$
	
As $g\in A_D(K)$ is arbitrary, the norm function $\|\cdot\|$ coincides with 
$$
	\sup_{P \sub \{1,\dots,n\}}\left|\sum_{Q \sub P} \theta_{C(Q)}(\cdot) \right|\cdot N_{D,P}(\cdot)
$$
on~$A_D(K)$ and so $\|\cdot\|$ is measurable with respect to the trace of $\Ba(C(K),w)$ on $A_D(K)$. The proof is over.
\end{proof}

Finally, we arrive at the main result of this subsection:

\begin{theo}\label{analysing:6}
Suppose $\mathfrak{c}$ is a Kunen cardinal.
Then the restriction of the supremum norm to~$S(K)$ is measurable with respect to the trace of $\Ba(C(K),w)$
on $S(K)$.
\end{theo}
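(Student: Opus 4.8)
The plan is to realize $S(K)$ as a countable union of subsets which are relatively $\Ba(C(K),w)$-measurable and on each of which the supremum norm is relatively $\Ba(C(K),w)$-measurable, and then to paste these pieces together as in Remark~\ref{analysing:5}. The natural pieces are the sets $A_D(K)$: Corollary~\ref{cor:SDmeasurable} shows each lies in the trace of $\Ba(C(K),w)$ on~$S(K)$, while Lemma~\ref{lem:NormRestriction} shows (this is the only place where the Kunen hypothesis intervenes, via Lemma~\ref{lem:zetaP}) that $\|\cdot\|\uhr A_D(K)$ is measurable for the trace of $\Ba(C(K),w)$ on~$A_D(K)$. Accordingly I would first fix, once and for all, a countable algebra $\cZ$ on $\algb^+$ that separates the points of~$\algb^+$ and for which all the functions $\zeta_{n,P}$ from Remark~\ref{remark:composition} are $\otimes_n\sigma(\cZ)$-measurable; this is possible by Lemma~\ref{lem:zetaP}. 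Then the family $\mathcal{D}$ of all finite partitions of~$\algb^+$ whose cells belong to~$\cZ$ is \emph{countable}, each such partition being a finite subset of the countable set~$\cZ$.

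The combinatorial core is the identity $\bigcup_{D\in\mathcal{D}}A_D(K)=S(K)\sm L$, where $L$ denotes the one-dimensional subspace of constant functions on~$K$. For ``$\supseteq$'': given $g\in S(K)\sm L$, Lemma~\ref{lem:simplification} furnishes a representation $g=\sum_{r\sub s}z_r1_{\widehat{W(r,s\sm r)}}$ with $s\sub\algb^+$ finite and satisfying condition~(ii) of that lemma; as $g$ is not constant we have $s\neq\emptyset$. Enumerating $s=\{b_1,\dots,b_m\}$ and picking, for $i\neq j$, a set $Z_{ij}\in\cZ$ with $b_i\in Z_{ij}$ and $b_j\notin Z_{ij}$, the atoms of the subalgebra of~$\cZ$ generated by the finitely many sets $Z_{ij}$ form a finite partition of~$\algb^+$ into $\cZ$-sets, each atom meeting $s$ in at most one point. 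Merging every atom disjoint from~$s$ into the atom containing~$b_1$ yields a partition $D\in\mathcal{D}$ with $|T\cap s|=1$ for all $T\in D$; hence $g\in A_D(K)$ by Definition~\ref{defi:AD}. For ``$\subseteq$'': if $g\in A_D(K)$, then any representation of~$g$ as in Definition~\ref{defi:AD} has $s\neq\emptyset$ (since $D\neq\emptyset$ and $|T\cap s|=1$ for $T\in D$); were $g$ constant, all coefficients $z_r$ with $\bigcap_{b\in r}b\neq\0$ would be equal, so $s'=\emptyset$ would violate condition~(ii) of Definition~\ref{defi:AD}, a contradiction. Thus $A_D(K)\cap L=\emptyset$, and the identity follows.

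To conclude, I would argue as follows. Since $\mathcal{D}$ is countable and each $A_D(K)$ is in the trace of $\Ba(C(K),w)$ on~$S(K)$ (Corollary~\ref{cor:SDmeasurable}), so is $\bigcup_{D\in\mathcal{D}}A_D(K)$, whence so is its complement~$L$ in~$S(K)$. On~$A_D(K)$ the norm is relatively measurable by Lemma~\ref{lem:NormRestriction}; on~$L$ we have $\|g\|=|\mu_Z(g)|$ for any fixed $Z\in\cZ$ (indeed $\mu_Z(c\,1_{\widehat{\1}})=c$), so $\|\cdot\|\uhr L$ is the restriction of the $w$-continuous, hence $\Ba(C(K),w)$-measurable, function $g\mapsto|\mu_Z(g)|$. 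Therefore, enumerating $\mathcal{D}=\{D_1,D_2,\dots\}$ and setting $\Omega_j:=L\cup A_{D_1}(K)\cup\dots\cup A_{D_j}(K)$, each $\Omega_j$ lies in the trace of $\Ba(C(K),w)$ on~$S(K)$, the sequence is increasing with union~$S(K)$, and $\|\cdot\|\uhr\Omega_j$ is measurable for the trace on~$\Omega_j$; Remark~\ref{analysing:5}, applied to each set $\{g\in S(K):\|g\|\in B\}$ with $B\sub\erre$ Borel, then gives that $\|\cdot\|\uhr S(K)$ is measurable for the trace of $\Ba(C(K),w)$ on~$S(K)$.

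The step I expect to be the main obstacle — or at least the one easiest to overlook — is that the constant functions escape the cover $\bigcup_{D}A_D(K)$ entirely: in any simplified representation of a constant as in Lemma~\ref{lem:simplification} one is forced to take $s=\emptyset$, which is incompatible with the requirement $|T\cap s|=1$ of Definition~\ref{defi:AD}. Fortunately this is benign, since $L=S(K)\sm\bigcup_D A_D(K)$ is thereby automatically relatively measurable and the norm is trivially relatively measurable on~$L$. The only other point demanding a little care is the merging operation producing a partition $D\in\mathcal{D}$ whose every cell meets $s$ in \emph{exactly} one point; the rest is bookkeeping with traces of $\sigma$-algebras already handled in Remark~\ref{analysing:5}, the substantive content residing in Corollary~\ref{cor:SDmeasurable} and Lemma~\ref{lem:NormRestriction}.
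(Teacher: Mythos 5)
Your proof is correct and follows essentially the same route as the paper's: decompose $S(K)$ into the countably many relatively measurable pieces $A_D(K)$, with $D$ ranging over the (countable) family of finite partitions of~$\algb^+$ into cells from the separating algebra~$\cZ$, apply Corollary~\ref{cor:SDmeasurable} and Lemma~\ref{lem:NormRestriction} to each piece, and paste. The one point where you genuinely add something is the constants: the paper asserts $S(K)=\bigcup_{D}A_D(K)$ outright, whereas, as you observe, a constant function admits no representation satisfying both conditions of Definition~\ref{defi:AD} --- condition (i) forces $s\neq\emptyset$, and condition (ii) is then violated by $s'=\emptyset$ since all coefficients $z_r$ with $\bigcap_{b\in r}b\neq\0$ must equal the constant value --- so the constants must be, and in your write-up correctly are, handled separately (trivially, via any single functional $\mu_Z$); this is a harmless but real oversight in the published argument that your version repairs.
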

\begin{proof} 
Let $\Pi$ be the collection of all partitions of~$\algb^+$ into finitely many elements of~$\cZ$. 
By Lemma~\ref{lem:simplification}, we can write $S(K)=\bigcup_{D\in \Pi}A_D(K)$. Since $\Pi$ is countable and each $A_D(K)$ belongs to the trace of $\Ba(C(K),w)$
on~$S(K)$ (see Corollary~\ref{cor:SDmeasurable}), the result follows 
from Lemma~\ref{lem:NormRestriction}.
\end{proof}

\subsection{Some open problems}

\begin{enumerate}
\item[(A)] Let $L$ be a compact space. Is the $\Ba(C(L),w)$-measurability of the supremum norm on~$C(L)$
equivalent to the $w^*$-separability of~$B_{C(L)^*}$ or $C(L)^*$? What about the compact space~$K$ considered in this paper?
 
\item[(B)] Let $(X,\|\cdot\|)$ be a Banach space and suppose $A \sub X$ is a norm dense set (or linear subspace) such that 
$\|\cdot\|$ is relatively $\Ba(X,w)$-measurable on~$A$. Does this imply that $\|\cdot\|$ is $\Ba(X,w)$-measurable on~$X$?

Note that the analogous question for the property ``$B_{X^*}$ is $w^*$-separable'' has 
positive answer (just apply the Hahn-Banach theorem), while 
for the property ``$X^*$ is $w^*$-separable'' it has negative answer, 
see \cite[Example~1.1]{FPZ1997}.

\item[(C)] Let $L$ be a compact space and consider the `square' mapping
$$
	S:C(L) \to C(L), \quad S(g):=g^2.
$$
Which conditions on~$L$ ensure that $S$ is $\Ba(C(L),w)$-measurable? 

Note that if
$L$ carries a strictly positive measure, say~$\mu$, then
the supremum norm on~$C(L)$ can be computed as 
$$
	\|g\|=\lim_{n \to \infty} \left(\int_{L}f^{2n} \, d\mu\right)^{\frac{1}{2n}},
$$
hence $\|\cdot\|$ is $\Ba(C(L),w)$-measurable whenever $S$ is $\Ba(C(L),w)$-measurable.
What about the compact space~$K$ considered in this paper?
\end{enumerate}

\end{document}